\documentclass[12pt,reqno,a4paper]{amsart}
\usepackage{amsfonts,amsthm}
\usepackage{amsmath,amssymb}
\usepackage[dvipsnames]{xcolor}
\usepackage{times}
\usepackage{graphicx,enumitem}
\usepackage[margin=2.5cm]{geometry}

\usepackage{hyperref}
\hypersetup{
    colorlinks=true,
    linkcolor=blue,
    citecolor=Green,
}

\setlength\arraycolsep{2pt}

\newcommand{\veps}{\varepsilon}
\newcommand{\R}{\mathbb{R}}

\newcommand{\C}{\mathbb{C}}
\newcommand{\N}{\mathbb{N}}

\newcommand{\vp}{\varphi}

\newcommand{\im}{\operatorname{Im}}
\newcommand{\re}{\operatorname{Re}}

\def\z{\zeta}
\def\el{\mathcal{L}}
\newcommand\s[2]{
\ensuremath{{_{#2}}\sqrt{#1}}}

\newtheorem{theorem}{Theorem}[section]

\theoremstyle{plain}

\newtheorem{lemma}[theorem]{Lemma}

\newtheorem{defin}[theorem]{Definition}
\newenvironment{definition}{\begin{defin}\rm}{\end{defin}}
\newtheorem{corollary}[theorem]{Corollary}

\newtheorem{proposition}[theorem]{Proposition}

{\theoremstyle{definition}
	\newtheorem{remark}[theorem]{Remark}}
\newtheorem{exa}[theorem]{Example}
\newenvironment{example}{\begin{exa}\rm}{\end{exa}}

\numberwithin{equation}{section}
\linespread{1.3}

\usepackage{pgf,tikz}
\usetikzlibrary{matrix}
\usetikzlibrary{arrows}
\usetikzlibrary[patterns]
\tikzset{invclip/.style={clip,insert path={{[reset cm]
				(-16383.99999pt,-16383.99999pt) rectangle (16383.99999pt,16383.99999pt)
}}}}


\begin{document}
\title[Asymptotic Integration Theory]{Asymptotic integration theory for $f'' + P(z)f = 0$}
\author{Gary~G.~Gundersen, Janne~Heittokangas and Amine~Zemirni}

\begin{abstract}

Asymptotic integration theory gives a collection of results which provide a thorough description of the asymptotic growth and zero distribution of solutions of (*) $f''+P(z)f=~0$, where $P(z)$ is a polynomial. These results have been used by several authors to find interesting properties of solutions of (*). That said, many people have remarked that the proofs and discussion concerning asymptotic integration theory that are, for example, in E.~Hille's 1969 book \emph{Lectures on Ordinary Differential Equations} are difficult to follow. The main purpose of this paper is to make this theory more understandable and accessible by giving complete explanations of the reasoning used to prove the theory and by writing full and clear statements of the results. A considerable part of the presentation and explanation of the material is different from that in Hille's book.


\medskip
\noindent
\textbf{Keywords:}  Asymptotic growth, asymptotic integration, 
critical rays and translates, linear differential equation, Liouville's transformation, oscillation theory. 

\medskip
\noindent
\textbf{2020 MSC:} Primary 34M10; Secondary 34M05, 30D35.

\end{abstract}

\maketitle


\section{Introduction and main results}


\thispagestyle{empty}

The non-trivial solutions $f$ of the differential equation
	\begin{equation}
	f^{\prime \prime }+P(z)f=0  \label{e1}
	\end{equation}
with a non-constant polynomial coefficient 
	\begin{equation}
	P(z)=p_{n}z^{n}+p_{n-1}z^{n-1}+\cdots +p_{0},\quad p_{n}\not=0,\ n\geq 1,  \label{poly}
	\end{equation}
are known to be entire, of order of growth $\rho (f)=\frac{n+2}{2}$ and of finite type. In addition, if $f_1,f_2$ are
linearly independent solutions of \eqref{e1}, then $\rho(f_1f_2)=\lambda(f_1f_2)=\max\left\{\lambda(f_1),\lambda(f_2)\right\}=\frac{n+2}{2}$, see \cite{laine}. Here $\lambda(g)$ denotes the exponent
of convergence of the zero sequence of $g$. It is easy to see that the zeros of $f$ are simple, but
it is non-trivial to find the location of the zeros. Indeed, all but finitely many zeros of $f$ lie in the $n+2$ sectors
	\begin{equation}\label{sectors+rays}
	W_{j}\left( \varepsilon \right) =\left\{ z:|\arg (z)-\theta _{j}|<\varepsilon\right\},
	\quad \theta _{j}=\frac{2\pi j-\arg(p_n)}{n+2},
	\end{equation}
where $-\pi<\arg(p_n)\le\pi$, $\varepsilon >0$ is arbitrarily small and $j=0,\ldots,n+1$. 
 Chapter~$ 7.4 $ in Hille's book \cite{hille} is the general citation to this property, although the method of proof originates from his earlier papers \cite{hille-1,hille3}. Sometimes \cite[Ch.~5.6]{hille2} is also cited. 
%
%
In this connection, see 
the papers by Fuchs \cite{Fuchs}, Nevanlinna \cite{Nevanlinna} and Wittich \cite{Wittich}.

Hille remarked in \cite[p.~342]{hille} that 
if a solution $f\not\equiv 0$ of \eqref{e1} has
infinitely many zeros $\{z_k\}$ in a sector $W_j(\varepsilon)$, then these zeros approach the
\emph{critical ray} $\arg(z)=\theta_j$. This remark is not true in general due to a
counterexample by Bank \cite{bank0}. However, it was proved by Bank \cite{bank0} (using Strodt's theory) and by Hellerstein-Rossi \cite{hell-rossi} (using Hille's approach), that the zeros approach the translate $\arg (z+c)=\theta_j$ of the critical ray, where $c=p_{n-1}/(np_n)$. Hence, the remark of Hille is true exactly when $p_{n-1}=0$. If this sequence of zeros $\{z_k\}$ in $W_j(\varepsilon)$ has a non-decreasing modulus, then the distance of the zeros $z_k$ from the ray $\arg (z+c)=\theta_j$ is $O(r_ke_n(r_k))$, as  $r_k=|z_k|\to\infty$, where
	\begin{equation}\label{er}
		e_n(r)= 
		\left\{
			\begin{array}{ll}
			r^{-2},\ & n>2,\\
			r^{-2} \log r,\ & n=2,\\
			r^{-3/2},\ & n=1.
			\end{array}
		\right.
		\end{equation}
This was proved in \cite{bank0, bank1,hell-rossi}.

Other than the above incorrect statement which was corrected by Bank and Hellerstein-Rossi, the rest of the asympototic theory of integration in \cite[Ch.~7.4]{hille} appears correct. The ideas in \cite[Ch.~7.4]{hille} are profound, the methods are creative,
and the results of the theory have been very useful in numerous applications to differential equations \cite{BT, gund, gund2, hell-rossi0, hell-rossi, langley, LH, long, long2, long3, shin, WLHQ}. At the same time, many readers find the material in \cite[Ch.~7.4]{hille} difficult to follow because (a)~general statements of some of the basic results and consequences of the theory are not clearly and fully stated, and (b) details that would justify several steps in the proofs are omitted. These issues can make the reading of \cite[Ch.~7.4]{hille} laborious.


Accordingly, our purposes are as follows:
	\begin{itemize}
	\item[(I)] We write clearly stated results together with rigorous proofs on the growth and zero distribution of solutions of \eqref{e1} that are obtained from Hille's approach. For the zero distribution, we take into
	account the above convergence rate $O(r_ke_n(r_k))$ of the zeros $\{z_k\}$ toward the \emph{critical translates} $\arg (z+c)=\theta_j$, where $j=0,\ldots,n+1$.
	\item[(II)] To obtain the results in (I), we need Liouville's transformation and Hille's theory on asymptotic integration, which
	we discuss in detail in such a way that the
	explanations will be more accessible to non-expert readers who have some background in complex analysis. Some of the 
	presentations and explanations will be in revised frameworks and will be different from the material in \cite[Ch.~7.4]{hille}. 
	\end{itemize}

In the literature, the following two simplified facts are often referenced to \cite[Ch.~7.4]{hille}:  
	\begin{itemize}
	\item[(A)] On all the rays in an open sector	 $S(\theta_{j-1},\theta_{j})$ determined by any two consecutive critical rays, 
	each solution $f\not\equiv 0$ either blows up on each ray or decays to zero exponentially on each ray.
	By this we mean, respectively, that
		$$
		\liminf_{r\to \infty} r^{-(n+2)/2}\log |f(re^{i\theta})|>0
		\quad\textnormal{or}\quad
		\liminf_{r\to \infty} r^{-(n+2)/2}\log |f(re^{i\theta})|^{-1}>0,
		$$
	for $\theta\in (\theta_{j-1},\theta_{j})$. 
	\item[(B)] If a sector $W_{j}\left( \varepsilon \right) =\left\{ z:|\arg (z)-\theta _{j}|<\varepsilon\right\}$ contains infinitely many zeros, then the number of zeros
	in $W_{j}\left( \varepsilon \right)$ is asymptotically comparable to $r^{(n+2)/2}$.
	\end{itemize}

In Theorems \ref{theo-growth} and \ref{theo-zeros} below we write more precise statements of these results. 
For this purpose, we will use the notation 
	\begin{equation}\label{consts}
	c=\frac{p_{n-1}}{np_n},\quad q=\frac{n+2}{2},\quad 
	d= \frac{(p_n)^{1/2}}{q},
	\end{equation}
where $(p_n)^{1/2}= \sqrt{|p_n|}\exp\left(i \frac{\arg(p_n)}{2}\right)$ with $-\pi<\arg (p_n)\le \pi$. 
The first result concerns the exponential growth and decay of solutions to make (A)  precise.

\begin{theorem}\label{theo-growth}
Let $f$ be a non-trivial solution of \eqref{e1}. Then the following statements hold:
\begin{itemize}
\item[\textnormal{(a)}] In any given open sector $ S $ between consecutive critical rays, $ f $  either (i) blows up exponentially on all the rays $ \arg (z)= \theta $ in $ S $, or (ii) decays exponentially to zero on all the rays $ \arg (z)= \theta $ in $ S $.
Specifically, on all the rays $ \arg (z)= \theta $ in $ S $, $ f $ is asymptotically comparable to either
	$$
	E_1(z)= \exp \left\{  id z^q (1+o(1))\right\}\quad \text{or} \quad E_2(z)= \exp \left\{ - id z^q (1+o(1))\right\}.
	$$
Moreover, in each such sector $ S $, there exist solutions of \eqref{e1} of both types (i)  and (ii).

\item[\textnormal{(b)}] In any two adjacent sectors $S(\theta_{j-1},\theta_j)$ and $S(\theta_j,\theta_{j+1})$ that border one common critical ray $\arg (z)=\theta_j$, there cannot exist a ray in $S(\theta_{j-1},\theta_j)$ and another ray in $S(\theta_j,\theta_{j+1})$ such that $ f $ decays exponentially to zero on both rays.
Here, $\theta_{-1}=\theta_{n+1}$.
\end{itemize}
\end{theorem}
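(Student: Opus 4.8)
The plan is to base everything on Liouville's transformation, which turns \eqref{e1} into an equation of the form $W'' = (1 + \phi(Z))W$ in a suitable $Z$-variable, with $\phi(Z) = O(|Z|^{-\gamma})$ for some $\gamma > 0$ along the relevant paths. Concretely, one sets $Z = \int^z P(t)^{1/2}\,dt = d\,z^q(1 + o(1))$ (branch chosen consistently in a sector), so that on a ray $\arg z = \theta$ strictly between two consecutive critical rays, $\re(iZ)$ has a definite sign and $|\re(iZ)| \to \infty$ like a constant times $r^q$. The asymptotic integration theorem (Hille's theory, as developed earlier in the paper) then provides, along each such ray, a basis of solutions behaving like $e^{Z}$ and $e^{-Z}$, i.e. like $E_1$ and $E_2$ above; this is exactly part~(a). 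For part~(b) I would argue by contradiction and exploit a Wronskian/linear-algebra obstruction together with the fact that $E_1$ and $E_2$ swap their roles of ``dominant'' and ``recessive'' as one crosses a critical ray.

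Here is the contradiction argument for (b) in more detail. Suppose $f \not\equiv 0$ decays exponentially on some ray $\arg z = \alpha \in (\theta_{j-1}, \theta_j)$ and also on some ray $\arg z = \beta \in (\theta_j, \theta_{j+1})$. Apply part~(a) in the sector $S(\theta_{j-1}, \theta_j)$: the two types of solutions $E_1, E_2$ correspond to $\exp\{\pm i d z^q(1+o(1))\}$, and on the ray $\arg z = \alpha$ exactly one of these, say $E_2$ (the recessive one), tends to $0$; since $f$ decays there, $f$ must be (asymptotically comparable to) the recessive solution on that ray, hence the $E_1$-component of $f$ relative to a fundamental system adapted to $S(\theta_{j-1},\theta_j)$ is forced — more precisely, $f = c\,E_2(1+o(1))$ on $\arg z=\alpha$ with the $E_1$-coefficient ``invisible'' because $E_1$ blows up while $f$ decays, which forces that coefficient to vanish in the connection formula. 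The key point is that the subdominant solution in $S(\theta_{j-1},\theta_j)$ becomes the \emph{dominant} solution in the adjacent sector $S(\theta_j,\theta_{j+1})$, because crossing the critical ray $\arg z=\theta_j$ flips the sign of $\re(\pm i d z^q)$. Therefore, if $f$ is (a multiple of) the recessive solution on one side of $\theta_j$, it must be (a multiple of) the dominant solution on the other side, so on $\arg z=\beta$ it blows up exponentially, contradicting the assumption that it decays there.

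To make ``recessive on one side becomes dominant on the other'' rigorous, I would track the real part of the exponent. Writing $\Phi(z) = d z^q$, on the critical ray $\arg z = \theta_j$ one has $i\Phi(z) \in \R$ (that is precisely the defining property of a critical ray, equivalently $\re(i d z^q)=0$ there), and $\re(i\Phi(z))$ changes sign as $\arg z$ passes through $\theta_j$, being positive on one of the two adjacent open sectors and negative on the other. Hence $|E_1(z)| = \exp\{-\re(i\Phi(z))(1+o(1))\cdot|z|^q/|z|^q\}$ — more carefully $|E_1(re^{i\theta})| = \exp\{(1+o(1))\re(id r^q e^{iq\theta})\}$ — is exponentially large on exactly one of $S(\theta_{j-1},\theta_j)$, $S(\theta_j,\theta_{j+1})$ and exponentially small on the other, and symmetrically for $E_2$. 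Combining this sign flip with the fact that a solution decaying exponentially on a ray must coincide (up to the asymptotic comparability of part~(a), and up to a scalar) with the recessive solution there — because the dominant part of its expansion would otherwise force exponential blow-up — yields the claim: $f$ cannot be recessive on both sides of $\theta_j$, so it cannot decay on a ray in each of the two adjacent sectors.

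The main obstacle I expect is the careful bookkeeping needed to pass from ``asymptotically comparable to $E_1$ or $E_2$ on each individual ray'' (the form of part~(a)) to a statement that holds uniformly enough across a half-neighborhood of the critical ray $\arg z=\theta_j$ so that the connection/flip argument is legitimate — in particular, justifying that the recessive solution in $S(\theta_{j-1},\theta_j)$ is, as an \emph{entire} function, the same solution (up to scalar) whose restriction to $S(\theta_j,\theta_{j+1})$ is dominant, rather than two a priori unrelated local descriptions. This is exactly the kind of Stokes-phenomenon subtlety that Hille's asymptotic integration theorem is designed to handle, so the resolution should be a direct appeal to that theorem together with uniqueness of the recessive solution in a sector of opening less than $\pi/q$; but the argument must be phrased so that the sectors of validity of the two fundamental systems overlap along a ray adjacent to $\theta_j$, which is where the bound $\eps$ on the sector openings and the precise shape of $\phi$ in Liouville's transformation get used.
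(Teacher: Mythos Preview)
Your approach to part~(a) is essentially the paper's: Liouville's transformation followed by asymptotic integration to produce a fundamental system asymptotic to $e^{\pm i\zeta}$, then pull back to the $z$-plane. (Minor slip: the transformed equation is $W''+(1-\phi(Z))W=0$, not $W''=(1+\phi(Z))W$; this is why the basic solutions are $e^{\pm iZ}$ rather than $e^{\pm Z}$.)

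For part~(b) you take a genuinely different route. The paper does not run a Stokes/connection argument at all: it simply observes that if $g_j$ decayed exponentially in both $G_j^-$ and $G_j^+$, then the Phragm\'en--Lindel\"of indicator would satisfy $h_{g_j}(\theta)<0$ and $h_{g_j}(\theta+2\pi/(n+2))<0$ for some $\theta$, which is forbidden by a standard inequality for indicators of entire functions of finite order and type (the reference is Levin's book). That is a one-line appeal to an external result. Your argument---the recessive solution on one side of $\theta_j$ becomes dominant on the other, so $f$ cannot be recessive on both sides---is correct and more self-contained, and the obstacle you flag is not actually an obstacle in the paper's framework: the asymptotic fundamental system $\{w_j^+,w_j^-\}$ furnished by Corollaries~\ref{t22}/\ref{vol-} is valid uniformly on all of $\tilde{G}_j(\delta,\tilde{R})$, whose preimage $G_j(R)$ already contains \emph{both} sectors $G_j^-$ and $G_j^+$ straddling the critical ray. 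So there is a single entire-function decomposition $w_j=c_{j1}w_j^++c_{j2}w_j^-$ on the whole of $G_j(R)$; decay in $G_j^-$ forces one coefficient to vanish, decay in $G_j^+$ forces the other, and $w_j\equiv 0$. No overlapping of two separate fundamental systems is needed. The paper's route is shorter given the Levin citation; yours avoids the citation at the cost of tracking the sign of $\re(id z^q)$ across $\theta_j$, which you do correctly.
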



In Theorem~\ref{theo-growth}(a), any branch cut outside the sector $S$ and any branch for the square roots in the
expressions $E_1(z)$ and $E_2(z)$  can be chosen as long as they are the same for both $E_1(z)$ and $E_2(z)$. For some choices, the roles of $E_1(z)$ and $E_2(z)$ will be interchanged. This will be explained in Appendix~\ref{appendix}.


Instead of the classical $\varepsilon$-sectors in \eqref{sectors+rays}, when addressing (I), we consider the domains 
		$$
		\Lambda_j = \{z=re^{i\theta}: r>R,\; |\theta-\theta_j|<C e_n(r)\}
		$$
and their translates
		\begin{equation}\label{CS}
		\Lambda_{j,c}=\{z: z+c \in \Lambda_j\}.
		\end{equation}
Here $R>0$ is large enough, $C=C(n,R)>0$, and $c$ is defined in \eqref{consts}. 
The second result concerns the distribution of zeros of solutions to make (B)  precise. 

\begin{theorem}\label{theo-zeros}
Let $f$ be a non-trivial solution of \eqref{e1}. Then all but at most finitely many zeros of $f$ lie
in the union 	
		$$
		\bigcup_{j=0}^{n+1} \Lambda_{j,c}.
		$$
If $f$ has infinitely many zeros in $\Lambda_{j,c}$,  then
	\begin{eqnarray}
	n(r,\Lambda_{j,c} ,1/f)&=&\frac{\sqrt{|p_{n}|}}{q\pi }r^{q}\left( 1+o(1)\right),
	\quad r\rightarrow \infty,\label{n}\\
	N(r,\Lambda_{j,c} ,1/f)&=&\frac{\sqrt{|p_{n}|}}{q^2 \pi }r^{q}\left( 1+o(1)\right),
	\quad r\rightarrow \infty,\label{N}
	\end{eqnarray} 
where the counting function $n(r,\Lambda_{j,c} ,1/f)$ refers to only
those zeros in $\Lambda_{j,c}$ satisfying $ |z|\le r $ and $N(r,\Lambda_{j,c} ,1/f)$ is the corresponding integrated counting function.
\end{theorem}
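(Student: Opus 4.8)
The plan is to reduce Theorem~\ref{theo-zeros} to the exponential description of solutions supplied by Theorem~\ref{theo-growth} together with a Liouville transformation that straightens out the equation near a critical translate. First I would recall (or set up, depending on what the earlier sections provide) the change of variables
$$
w=\int^z P(t)^{1/2}\,dt = d\,z^q\bigl(1+o(1)\bigr),\qquad z\to\infty,
$$
valid in any sector strictly between two consecutive critical rays, under which a solution $f$ of \eqref{e1} is transformed into a function behaving like a solution of $g''+g=0$ with an error term that is integrable along the relevant paths (the standard asymptotic integration lemma, which I would cite from the discussion in item~(II) of the introduction). In the $w$-plane the two model solutions $E_1$ and $E_2$ of Theorem~\ref{theo-growth}(a) become $e^{iw(1+o(1))}$ and $e^{-iw(1+o(1))}$, and the zeros of a solution that oscillates — i.e.\ a non-trivial linear combination $\alpha E_1+\beta E_2$ with $\alpha\beta\neq 0$, which by Theorem~\ref{theo-growth}(b) is precisely the case in which infinitely many zeros occur near $\theta_j$ — are asymptotically the points where $e^{2iw}$ equals a fixed constant, hence $w$ runs through an arithmetic progression with step $\pi$ along the image of the critical translate.

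The localization statement (all but finitely many zeros lie in $\bigcup_j\Lambda_{j,c}$) I would get from Theorem~\ref{theo-growth}(a): away from the critical rays, on every ray interior to a sector $S$, $f$ is comparable to a nonvanishing exponential $E_1$ or $E_2$, so $f$ has no large zeros there; a compactness/uniformity argument over the closed subsectors, combined with the precise width $C\,e_n(r)$ of $\Lambda_{j,c}$ coming from the error estimate $e_n(r)$ in \eqref{er}, confines the zeros to the translated thin sectors $\Lambda_{j,c}$. The shift by $c=p_{n-1}/(np_n)$ enters exactly because $\int^z P(t)^{1/2}\,dt = d(z+c)^q + o(z^{q-1}\cdot z)$ after absorbing the subleading coefficient $p_{n-1}$, so it is $(z+c)$, not $z$, whose argument is pinned to $\theta_j$; I would make this explicit by expanding $P(t)^{1/2}=p_n^{1/2}t^{n/2}(1+\tfrac{p_{n-1}}{p_n}t^{-1}+\cdots)^{1/2}$ and integrating term by term.

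For the counting asymptotics, let $z_k$ be the zeros in $\Lambda_{j,c}$ with $|z_k|\le r$, ordered by modulus, and let $w_k$ be their images. From the previous paragraph the $w_k$ are, up to an $o(1)$ perturbation, equally spaced with gap $\pi$ along a ray, so the number of them with $|w_k|\le\rho$ is $\rho/\pi + O(1)$. Translating back, $|w_k|\le\rho$ corresponds to $d\,r^q(1+o(1))\le \rho$, i.e.\ $|z_k|\le r$ corresponds to $|w_k|\le |d|\,r^q(1+o(1))$; therefore
$$
n(r,\Lambda_{j,c},1/f)=\frac{|d|\,r^q}{\pi}\bigl(1+o(1)\bigr)=\frac{\sqrt{|p_n|}}{q\pi}\,r^q\bigl(1+o(1)\bigr),
$$
using $|d|=\sqrt{|p_n|}/q$ from \eqref{consts}, which is \eqref{n}. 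The integrated form \eqref{N} then follows from
$$
N(r,\Lambda_{j,c},1/f)=\int_0^r \frac{n(t,\Lambda_{j,c},1/f)}{t}\,dt
$$
by substituting the asymptotic for $n$ and integrating $t^{q-1}$, which produces the extra factor $1/q$ and hence the constant $\sqrt{|p_n|}/(q^2\pi)$; the $o(1)$ is preserved under this integration by a routine dominated-convergence / Cesàro argument.

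The main obstacle I expect is making the asymptotic equidistribution of the $w_k$ genuinely uniform and quantitative enough to survive the transfer back to the $z$-plane: one has to control the $o(1)$ errors in $w=d z^q(1+o(1))$ uniformly as $z\to\infty$ inside the \emph{thin} region $\Lambda_{j,c}$ (whose angular width shrinks like $e_n(r)$), verify that the perturbation of the model equation $g''+g=0$ is integrable along the image curves so that the zero-counting of the true solution differs from that of $\alpha e^{iw}+\beta e^{-iw}$ by only $O(1)$, and check that the $o(1)$ in the count does not secretly hide a term of size comparable to $r^q$. This is exactly the place where Hille's treatment is terse, so the bulk of the work is bookkeeping the error terms from the asymptotic integration lemma; once that uniformity is in hand, the counting and the passage from $n$ to $N$ are straightforward.
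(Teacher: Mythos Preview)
Your overall strategy---Liouville transformation to a perturbed sine equation, identify the oscillatory solution with a shifted sine, count zeros as an arithmetic progression in the $w$-plane, pull back---is exactly the paper's approach. The counting argument and the passage from $n$ to $N$ are fine as written.

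Two points of comparison. First, the paper does not work directly with $\int^z P(t)^{1/2}\,dt$; it first normalizes via $g(z)=f(\mu z-c)$ with $\mu^{n+2}=p_n^{-1}$, which kills the $z^{n-1}$ term and turns the polynomial into $Q(z)=z^n+a_{n-2}z^{n-2}+\cdots$. The Liouville map is then applied to $Q$, and the shift by $c$ reappears only at the very end when translating results about $g$ back to $f$. Your direct expansion of $P(t)^{1/2}$ is equivalent, but the paper's two-step separation keeps the bookkeeping of the $e_n(r)$ error cleaner, since no $t^{-1}$ term survives in $(1+\ell(t))^{1/2}$.

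Second, your localization argument via Theorem~\ref{theo-growth}(a) is not quite enough for the sharp width $Ce_n(r)$. Comparability of $f$ to $E_1$ or $E_2$ on rays strictly inside a sector only yields the coarse statement that large zeros lie in the $\varepsilon$-sectors $W_j(\varepsilon)$; a compactness argument over closed subsectors will not squeeze this down to $|\theta-\theta_j|<Ce_n(r)$. The paper gets the precise width by a different route: in the $\zeta$-plane the oscillatory solution satisfies a Volterra estimate sharp enough to run Rouch\'e on squares of side $2\gamma$ centred at $z_0+k\pi$, so \emph{all} its zeros (up to finitely many) lie in a fixed horizontal strip $|\operatorname{Im}\zeta-y_0|<\gamma$. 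A separate lemma then shows that the $L_j$-preimage of any horizontal line lies in $\{|\theta-\psi_j|<Ce_n(r)\}$, and the $e_n(r)$ there comes from the error $K(z)$ in $\zeta=\tfrac{2}{n+2}z^{(n+2)/2}(1+K(z))$, not from the exponential comparison. So the localization should be routed through the $\zeta$-plane Rouch\'e argument rather than through Theorem~\ref{theo-growth}(a); you essentially already have this machinery in place in your first paragraph, so it is a matter of using it for localization as well as for counting.
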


From this result it is clear that the zeros of $f$ approach the critical translates 
	$$
	\arg(z+c)=\theta_j,\quad j=0,\ldots,n+1,
	$$ 
emanating from the point $-c$, see Figure~\ref{fig00}. It is easy to see that the sector $W_j(\veps)$ enclosing the critical ray $\arg (z)=\theta_j$ contains the essential part of $\Lambda_{j,c}$, no matter how large $|c|$ is.
Therefore, it follows that all but at most finitely many zeros of $f$ are located in the union of the sectors $W_j(\veps)$, as is known previously.

\begin{figure}[h!]
	\centering
	\definecolor{ffqqqq}{rgb}{1,0,0}
	\definecolor{qqqqff}{rgb}{0,0,1}
	\begin{tikzpicture}[line cap=round,line join=round,>=triangle 45,x=1.0cm,y=1.0cm]
	\clip(8.18,0.3) rectangle (20.14,10.52);
	
	\begin{scope}
	\begin{pgfinterruptboundingbox} 
	\path[invclip](12.73,4.48) circle (2.8cm);
	\end{pgfinterruptboundingbox}
	\draw[fill=black,fill opacity=0.1, draw opacity=0] (20.09,2.12) -- (20.04,2.15) -- (19.95,2.2) -- (19.86,2.25) -- (19.77,2.29) -- (19.69,2.34) -- (19.61,2.38) -- (19.53,2.42) -- (19.46,2.46) -- (19.39,2.5) -- (19.32,2.53) -- (19.26,2.57) -- (19.19,2.6) -- (19.13,2.63) -- (19.08,2.66) -- (19.02,2.69) -- (18.96,2.72) -- (18.91,2.75) -- (18.86,2.78) -- (18.76,2.83) -- (18.67,2.88) -- (18.58,2.93) -- (18.5,2.98) -- (18.43,3.02) -- (18.35,3.06) -- (18.28,3.1) -- (18.22,3.14) -- (18.15,3.17) -- (18.09,3.21) -- (18.03,3.24) -- (17.98,3.27) -- (17.92,3.3) -- (17.87,3.33) -- (17.82,3.36) -- (17.73,3.41) -- (17.64,3.46) -- (17.56,3.51) -- (17.49,3.56) -- (17.42,3.6) -- (17.35,3.64) -- (17.29,3.67) -- (17.23,3.71) -- (17.18,3.74) -- (17.12,3.78) -- (17.07,3.81) -- (16.98,3.87) -- (16.9,3.92) -- (16.82,3.97) -- (16.75,4.02) -- (16.68,4.06) -- (16.62,4.1) -- (16.56,4.14) -- (16.51,4.18) -- (16.46,4.21) -- (16.37,4.28) -- (16.31,4.32) -- (16.25,4.37) -- (16.15,4.44) -- (16.07,4.51) -- (15.99,4.58) -- (15.92,4.64) -- (15.86,4.69) -- (15.81,4.75) -- (15.76,4.8) -- (15.72,4.84) -- (15.68,4.89) -- (15.64,4.93) -- (15.61,4.97) -- (15.58,5.01) -- (15.55,5.05) -- (15.53,5.09) -- (15.5,5.13) -- (15.48,5.16) -- (15.46,5.2) -- (15.44,5.23) -- (15.43,5.26) -- (15.41,5.3) -- (15.39,5.33) -- (15.38,5.36) -- (15.37,5.39) -- (15.36,5.42) -- (15.35,5.45) -- (15.34,5.48) -- (15.33,5.51) -- (15.32,5.53)
	
	-- (15.32,5.53) -- (15.32,5.56) -- (15.31,5.59) -- (15.3,5.62) -- (15.3,5.65) -- (15.29,5.68) -- (15.29,5.71) -- (15.28,5.74) -- (15.28,5.77) -- (15.28,5.8) -- (15.28,5.83) -- (15.27,5.86) -- (15.27,5.89) -- (15.27,5.92) -- (15.27,5.95) -- (15.27,5.98) -- (15.27,6.01) -- (15.27,6.05) -- (15.27,6.08) -- (15.28,6.11) -- (15.28,6.15) -- (15.28,6.18) -- (15.29,6.22) -- (15.29,6.26) -- (15.3,6.3) -- (15.3,6.34) -- (15.31,6.38) -- (15.32,6.42) -- (15.33,6.47) -- (15.34,6.51) -- (15.35,6.56) -- (15.36,6.62) -- (15.37,6.67) -- (15.39,6.73) -- (15.4,6.79) -- (15.42,6.86) -- (15.44,6.93) -- (15.46,7.01) -- (15.49,7.09) -- (15.52,7.18) -- (15.55,7.28) -- (15.57,7.33) -- (15.59,7.39) -- (15.61,7.45) -- (15.63,7.51) -- (15.66,7.58) -- (15.69,7.65) -- (15.72,7.73) -- (15.75,7.81) -- (15.78,7.9) -- (15.82,7.99) -- (15.84,8.04) -- (15.86,8.1) -- (15.89,8.15) -- (15.91,8.21) -- (15.93,8.27) -- (15.96,8.34) -- (15.99,8.41) -- (16.02,8.48) -- (16.05,8.55) -- (16.09,8.64) -- (16.12,8.72) -- (16.16,8.81) -- (16.21,8.91) -- (16.23,8.96) -- (16.25,9.02) -- (16.28,9.07) -- (16.3,9.13) -- (16.33,9.19) -- (16.36,9.25) -- (16.39,9.32) -- (16.42,9.38) -- (16.45,9.45) -- (16.48,9.53) -- (16.52,9.61) -- (16.56,9.69) -- (16.59,9.77) -- (16.64,9.86) -- (16.68,9.96) -- (16.7,10.01) -- (16.73,10.06) -- (16.75,10.11) -- (16.78,10.17) -- (16.8,10.22) -- (16.83,10.28) -- (16.86,10.34) -- (16.89,10.4) -- (16.91,10.47) -- (16.95,10.53) 
	
	--(16.54,10.46) -- (16.51,10.39) -- (16.47,10.32) -- (16.43,10.26) -- (16.4,10.2) -- (16.37,10.14) -- (16.34,10.08) -- (16.31,10.02) -- (16.28,9.97) -- (16.25,9.91) -- (16.22,9.86) -- (16.17,9.76) -- (16.12,9.67) -- (16.07,9.59) -- (16.02,9.5) -- (15.98,9.43) -- (15.94,9.35) -- (15.9,9.28) -- (15.86,9.22) -- (15.83,9.15) -- (15.79,9.09) -- (15.76,9.03) -- (15.73,8.98) -- (15.7,8.92) -- (15.67,8.87) -- (15.64,8.82) -- (15.59,8.73) -- (15.54,8.65) -- (15.49,8.57) -- (15.45,8.49) -- (15.4,8.42) -- (15.36,8.35) -- (15.33,8.29) -- (15.29,8.23) -- (15.26,8.18) -- (15.22,8.12) -- (15.19,8.07) -- (15.13,7.98) -- (15.08,7.9) -- (15.03,7.82) -- (14.98,7.75) -- (14.94,7.68) -- (14.9,7.62) -- (14.86,7.56) -- (14.82,7.51) -- (14.79,7.46) -- (14.72,7.37) -- (14.68,7.31) -- (14.63,7.25) -- (14.56,7.15) -- (14.49,7.07) -- (14.42,6.99) -- (14.36,6.92) -- (14.31,6.86) -- (14.25,6.81) -- (14.2,6.76) -- (14.16,6.72) -- (14.11,6.68) -- (14.07,6.64) -- (14.03,6.61) -- (13.99,6.58) -- (13.95,6.55) -- (13.91,6.53) -- (13.87,6.5) -- (13.84,6.48) -- (13.8,6.46) -- (13.77,6.44) -- (13.74,6.43) -- (13.7,6.41) -- (13.67,6.39) -- (13.64,6.38) -- (13.61,6.37) -- (13.58,6.36) -- (13.55,6.35) -- (13.52,6.34) -- (13.49,6.33) -- (13.47,6.32)
	
	-- (13.47,6.32) -- (13.44,6.32) -- (13.41,6.31) -- (13.38,6.3) -- (13.35,6.3) -- (13.32,6.29) -- (13.29,6.29) -- (13.26,6.28) -- (13.23,6.28) -- (13.2,6.28) -- (13.17,6.28) -- (13.14,6.27) -- (13.11,6.27) -- (13.08,6.27) -- (13.05,6.27) -- (13.02,6.27) -- (12.99,6.27) -- (12.95,6.27) -- (12.92,6.27) -- (12.89,6.28) -- (12.85,6.28) -- (12.82,6.28) -- (12.78,6.29) -- (12.74,6.29) -- (12.7,6.3) -- (12.66,6.3) -- (12.62,6.31) -- (12.58,6.32) -- (12.53,6.33) -- (12.49,6.34) -- (12.44,6.35) -- (12.38,6.36) -- (12.33,6.37) -- (12.27,6.39) -- (12.21,6.4) -- (12.14,6.42) -- (12.07,6.44) -- (11.99,6.46) -- (11.91,6.49) -- (11.82,6.52) -- (11.72,6.55) -- (11.67,6.57) -- (11.61,6.59) -- (11.55,6.61) -- (11.49,6.63) -- (11.42,6.66) -- (11.35,6.69) -- (11.27,6.72) -- (11.19,6.75) -- (11.1,6.78) -- (11.01,6.82) -- (10.96,6.84) -- (10.9,6.86) -- (10.85,6.89) -- (10.79,6.91) -- (10.73,6.93) -- (10.66,6.96) -- (10.59,6.99) -- (10.52,7.02) -- (10.45,7.05) -- (10.36,7.09) -- (10.28,7.12) -- (10.19,7.16) -- (10.09,7.21) -- (10.04,7.23) -- (9.98,7.25) -- (9.93,7.28) -- (9.87,7.3) -- (9.81,7.33) -- (9.75,7.36) -- (9.68,7.39) -- (9.62,7.42) -- (9.55,7.45) -- (9.47,7.48) -- (9.39,7.52) -- (9.31,7.56) -- (9.23,7.59) -- (9.14,7.64) -- (9.04,7.68) -- (8.99,7.7) -- (8.94,7.73) -- (8.89,7.75) -- (8.83,7.78) -- (8.78,7.8) -- (8.72,7.83) -- (8.66,7.86) -- (8.6,7.89) -- (8.53,7.91) -- (8.47,7.95) -- (8.4,7.98) -- (8.33,8.01) -- (8.26,8.04) -- (8.18,8.08) -- (8.1,8.12)

	-- (8.23,7.71) -- (8.31,7.66) -- (8.39,7.62) -- (8.47,7.58) -- (8.54,7.54) -- (8.61,7.5) -- (8.68,7.47) -- (8.74,7.43) -- (8.81,7.4) -- (8.87,7.37) -- (8.92,7.34) -- (8.98,7.31) -- (9.04,7.28) -- (9.09,7.25) -- (9.14,7.22) -- (9.24,7.17) -- (9.33,7.12) -- (9.42,7.07) -- (9.5,7.02) -- (9.57,6.98) -- (9.65,6.94) -- (9.72,6.9) -- (9.78,6.86) -- (9.85,6.83) -- (9.91,6.79) -- (9.97,6.76) -- (10.02,6.73) -- (10.08,6.7) -- (10.13,6.67) -- (10.18,6.64) -- (10.27,6.59) -- (10.36,6.54) -- (10.44,6.49) -- (10.51,6.44) -- (10.58,6.4) -- (10.65,6.36) -- (10.71,6.33) -- (10.77,6.29) -- (10.82,6.26) -- (10.88,6.22) -- (10.93,6.19) -- (11.02,6.13) -- (11.1,6.08) -- (11.18,6.03) -- (11.25,5.98) -- (11.32,5.94) -- (11.38,5.9) -- (11.44,5.86) -- (11.49,5.82) -- (11.54,5.79) -- (11.63,5.72) -- (11.69,5.68) -- (11.75,5.63) -- (11.85,5.56) -- (11.93,5.49) -- (12.01,5.42) -- (12.08,5.36) -- (12.14,5.31) -- (12.19,5.25) -- (12.24,5.2) -- (12.28,5.16) -- (12.32,5.11) -- (12.36,5.07) -- (12.39,5.03) -- (12.42,4.99) -- (12.45,4.95) -- (12.47,4.91) -- (12.5,4.87) -- (12.52,4.84) -- (12.54,4.8) -- (12.56,4.77) -- (12.57,4.74) -- (12.59,4.7) -- (12.61,4.67) -- (12.62,4.64) -- (12.63,4.61) -- (12.64,4.58) -- (12.65,4.55) -- (12.66,4.52) -- (12.67,4.49) -- (12.68,4.47)
	
	-- (12.68,4.47) -- (12.68,4.44) -- (12.69,4.41) -- (12.7,4.38) -- (12.7,4.35) -- (12.71,4.32) -- (12.71,4.29) -- (12.72,4.26) -- (12.72,4.23) -- (12.72,4.2) -- (12.72,4.17) -- (12.73,4.14) -- (12.73,4.11) -- (12.73,4.08) -- (12.73,4.05) -- (12.73,4.02) -- (12.73,3.99) -- (12.73,3.95) -- (12.73,3.92) -- (12.72,3.89) -- (12.72,3.85) -- (12.72,3.82) -- (12.71,3.78) -- (12.71,3.74) -- (12.7,3.7) -- (12.7,3.66) -- (12.69,3.62) -- (12.68,3.58) -- (12.67,3.53) -- (12.66,3.49) -- (12.65,3.44) -- (12.64,3.38) -- (12.63,3.33) -- (12.61,3.27) -- (12.6,3.21) -- (12.58,3.14) -- (12.56,3.07) -- (12.54,2.99) -- (12.51,2.91) -- (12.48,2.82) -- (12.45,2.72) -- (12.43,2.67) -- (12.41,2.61) -- (12.39,2.55) -- (12.37,2.49) -- (12.34,2.42) -- (12.31,2.35) -- (12.28,2.27) -- (12.25,2.19) -- (12.22,2.1) -- (12.18,2.01) -- (12.16,1.96) -- (12.14,1.9) -- (12.11,1.85) -- (12.09,1.79) -- (12.07,1.73) -- (12.04,1.66) -- (12.01,1.59) -- (11.98,1.52) -- (11.95,1.45) -- (11.91,1.36) -- (11.88,1.28) -- (11.84,1.19) -- (11.79,1.09) -- (11.77,1.04) -- (11.75,0.98) -- (11.72,0.93) -- (11.7,0.87) -- (11.67,0.81) -- (11.64,0.75) -- (11.61,0.68) -- (11.58,0.62) -- (11.55,0.55) -- (11.52,0.47) -- (11.48,0.39) -- (11.44,0.31) -- (11.41,0.23)
	
	-- (11.88,0.33) -- (11.93,0.41) -- (11.98,0.5) -- (12.02,0.57) -- (12.06,0.65) -- (12.1,0.72) -- (12.14,0.78) -- (12.17,0.85) -- (12.21,0.91) -- (12.24,0.97) -- (12.27,1.02) -- (12.3,1.08) -- (12.33,1.13) -- (12.36,1.18) -- (12.41,1.27) -- (12.46,1.35) -- (12.51,1.43) -- (12.55,1.51) -- (12.6,1.58) -- (12.64,1.65) -- (12.67,1.71) -- (12.71,1.77) -- (12.74,1.82) -- (12.78,1.88) -- (12.81,1.93) -- (12.87,2.02) -- (12.92,2.1) -- (12.97,2.18) -- (13.02,2.25) -- (13.06,2.32) -- (13.1,2.38) -- (13.14,2.44) -- (13.18,2.49) -- (13.21,2.54) -- (13.28,2.63) -- (13.32,2.69) -- (13.37,2.75) -- (13.44,2.85) -- (13.51,2.93) -- (13.58,3.01) -- (13.64,3.08) -- (13.69,3.14) -- (13.75,3.19) -- (13.8,3.24) -- (13.84,3.28) -- (13.89,3.32) -- (13.93,3.36) -- (13.97,3.39) -- (14.01,3.42) -- (14.05,3.45) -- (14.09,3.47) -- (14.13,3.5) -- (14.16,3.52) -- (14.2,3.54) -- (14.23,3.56) -- (14.26,3.57) -- (14.3,3.59) -- (14.33,3.61) -- (14.36,3.62) -- (14.39,3.63) -- (14.42,3.64) -- (14.45,3.65) -- (14.48,3.66) -- (14.51,3.67) -- (14.53,3.68)
	-- (14.53,3.68) -- (14.56,3.68) -- (14.59,3.69) -- (14.62,3.7) -- (14.65,3.7) -- (14.68,3.71) -- (14.71,3.71) -- (14.74,3.72) -- (14.77,3.72) -- (14.8,3.72) -- (14.83,3.72) -- (14.86,3.73) -- (14.89,3.73) -- (14.92,3.73) -- (14.95,3.73) -- (14.98,3.73) -- (15.01,3.73) -- (15.05,3.73) -- (15.08,3.73) -- (15.11,3.72) -- (15.15,3.72) -- (15.18,3.72) -- (15.22,3.71) -- (15.26,3.71) -- (15.3,3.7) -- (15.34,3.7) -- (15.38,3.69) -- (15.42,3.68) -- (15.47,3.67) -- (15.51,3.66) -- (15.56,3.65) -- (15.62,3.64) -- (15.67,3.63) -- (15.73,3.61) -- (15.79,3.6) -- (15.86,3.58) -- (15.93,3.56) -- (16.01,3.54) -- (16.09,3.51) -- (16.18,3.48) -- (16.28,3.45) -- (16.33,3.43) -- (16.39,3.41) -- (16.45,3.39) -- (16.51,3.37) -- (16.58,3.34) -- (16.65,3.31) -- (16.73,3.28) -- (16.81,3.25) -- (16.9,3.22) -- (16.99,3.18) -- (17.04,3.16) -- (17.1,3.14) -- (17.15,3.11) -- (17.21,3.09) -- (17.27,3.07) -- (17.34,3.04) -- (17.41,3.01) -- (17.48,2.98) -- (17.55,2.95) -- (17.64,2.91) -- (17.72,2.88) -- (17.81,2.84) -- (17.91,2.79) -- (17.96,2.77) -- (18.02,2.75) -- (18.07,2.72) -- (18.13,2.7) -- (18.19,2.67) -- (18.25,2.64) -- (18.32,2.61) -- (18.38,2.58) -- (18.45,2.55) -- (18.53,2.52) -- (18.61,2.48) -- (18.69,2.44) -- (18.77,2.41) -- (18.86,2.36) -- (18.96,2.32) -- (19.01,2.3) -- (19.06,2.27) -- (19.11,2.25) -- (19.17,2.22) -- (19.22,2.2) -- (19.28,2.17) -- (19.34,2.14) -- (19.4,2.11) -- (19.47,2.09) -- (19.53,2.05) -- (19.6,2.02) -- (19.67,1.99) -- (19.74,1.96) -- (19.82,1.92) -- (19.9,1.88) -- (19.98,1.84) -- (20.06,1.8) -- (20.15,1.76) -- cycle;
	\end{scope}

	\draw [line width=0.4pt,fill=black,fill opacity=0.1, draw opacity=0] (12.73,4.48) circle (2.8cm);
	
	\draw [dash pattern=on 3pt off 3pt,color=qqqqff,domain=8.18:20.14] plot(\x,{(--48-2*\x)/4});
	\draw [dash pattern=on 3pt off 3pt,color=qqqqff,domain=8.18:20.14] plot(\x,{(-46--4*\x)/2});
	
	\draw [dash pattern=on 2pt off 2pt,domain=8.18:20.14] plot(\x,{(--80.09-3.73*\x)/7.28});
	\draw [dash pattern=on 2pt off 2pt,domain=8.18:20.14] plot(\x,{(-75.99--7.28*\x)/3.73});

	\draw [->,line width=0.4pt] (12.73,4.48) -- (19.7,4.4);
	\draw [->,line width=0.4pt] (12.73,4.48) -- (12.7,10.4);

	\begin{scriptsize}
	\fill [color=qqqqff] (14,5) circle (1.5pt);
	\end{scriptsize}

	\draw (14.1,5.4) node[anchor=north west] {$-c$};
	\draw [-latex] (17.3,9.7) -- (15.4,9.7);
	\draw (17.25,10.04) node[anchor=north west] {\small critical ray};
	\draw [-latex] (17.3,8.8) -- (15.9,8.8);
	\draw (17.25,9.15) node[anchor=north west] {\small critical translate};
	
	\draw (14.1,2.25) node[anchor=north west] {$R$};
	
	\end{tikzpicture}
	\caption{Domains $\Lambda_{j,c}$ around the critical translates in the case $ n=2 $. All the zeros of $ f $  lie in the shaded area when $ R>0 $ is large enough.}
	\label{fig00}
\end{figure}
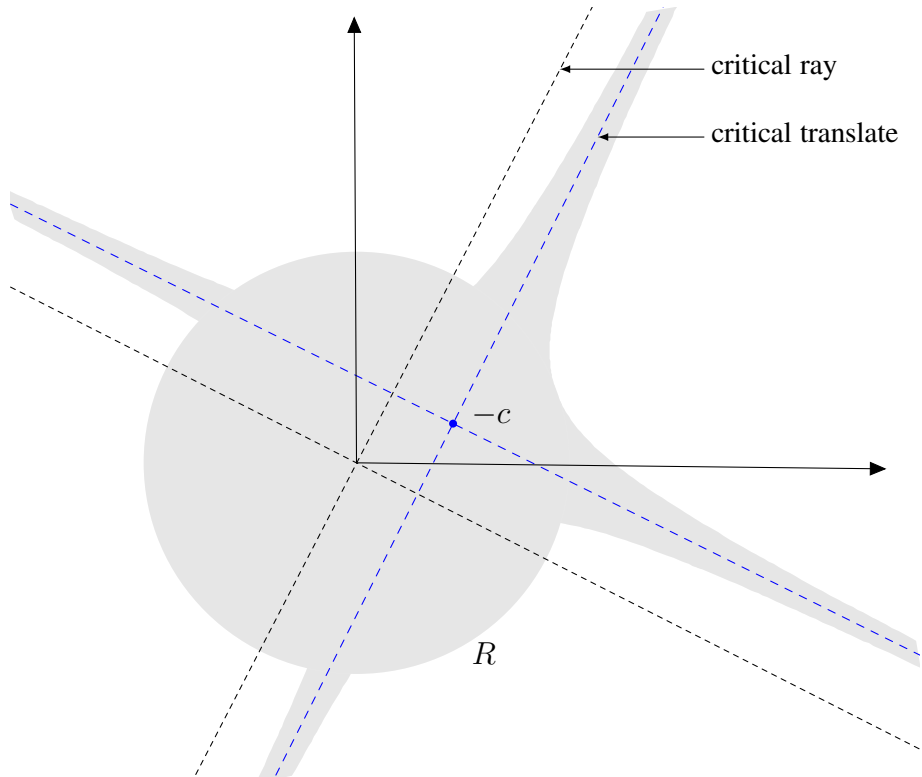

If $f$ has only finitely many zeros in $W_j(\veps)$, then the critical ray $\arg(z)=\theta_j$ is called a {\it shortage ray} of $f$, otherwise it is a {\it non-shortage ray} of $f$.  Regarding the concept of {\it shortage} in this situation, see \cite{gund}.
The third result  reveals the interplay between exponential growth/decay and non-shortage/shortage rays.

\begin{theorem}\label{theo-growth-zero}
Let $f$ be a non-trivial solution of \eqref{e1}. Then the following statements hold:
\begin{itemize}
\item[\textnormal{(a)}] If $f$ blows up exponentially on each ray in two adjacent sectors $S(\theta_{j-1},\theta_j)$ and $S(\theta_j,\theta_{j+1})$ that border a common critical ray $\arg(z)=\theta_j$, then the critical ray $\arg (z)=\theta_j$ is non-shortage.
\\[-20pt]

\item[\textnormal{(b)}] If $f$ decays to zero exponentially on all the rays in a sector $S(\theta_j,\theta_{j+1})$, then both critical rays $\arg(z)=\theta_j,\theta_{j+1}$ are shortage. \\[-20pt]
\end{itemize}
\end{theorem}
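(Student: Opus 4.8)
The plan is to read off everything from the local asymptotic representation of $f$ near a critical ray, which is precisely the tool produced in the proofs of Theorems~\ref{theo-growth} and~\ref{theo-zeros}. Concretely, that analysis provides, for each $j$, constants $c_1,c_2\in\C$ (depending on $f$ and on $j$), not both $0$, and a factor $A(z)$ that is nonvanishing for large $|z|$, such that
\[
f(z)=A(z)\bigl(c_1E_1(z)+c_2E_2(z)\bigr)\bigl(1+o(1)\bigr)
\]
uniformly on an angular neighborhood $\Omega_j=\{z=re^{i\theta}:r>R,\ |\theta-\theta_j|<\delta\}$ of the critical ray $\arg(z)=\theta_j$, where $\delta>0$ can be chosen so small that $\Omega_j$ still meets \emph{both} adjacent open sectors $S(\theta_{j-1},\theta_j)$ and $S(\theta_j,\theta_{j+1})$. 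Two further facts from the same analysis will be used repeatedly. First, a direct computation of $\re(idz^q)=-|d|r^q\sin(\arg(d)+q\theta)$ shows that exactly one of $E_1,E_2$ blows up exponentially throughout $S(\theta_{j-1},\theta_j)$ and the \emph{other} one blows up exponentially throughout $S(\theta_j,\theta_{j+1})$, while on the critical ray itself neither blows up; in short, the dominant exponential alternates across every critical ray. Second, $\arg(z)=\theta_j$ is non-shortage if and only if $c_1\ne0$ and $c_2\ne0$: if one of the coefficients vanishes then $f$ is comparable on $\Omega_j$ to a single exponential times the nonvanishing factor $A(z)$, hence zero-free there for $r$ large, whereas if both are nonzero the zero count carried out for \eqref{n}--\eqref{N} produces on the order of $r^{q}$ zeros of $f$ inside $\Omega_j$.

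Granting this, part~(a) is immediate. Assume $f$ blows up exponentially on every ray of $S(\theta_{j-1},\theta_j)$, in particular on the rays of $\Omega_j\cap S(\theta_{j-1},\theta_j)$. On such a ray $|E_1|$ and $|E_2|$ are reciprocal up to a subexponential factor, and the one of them that is dominant on $S(\theta_{j-1},\theta_j)$ tends to infinity; comparing $|f|$ with $|A|\,|E_1|$ and $|A|\,|E_2|$ then forces the coefficient of that dominant exponential to be nonzero. Running the same comparison on a ray of $\Omega_j\cap S(\theta_j,\theta_{j+1})$ shows that the coefficient of the exponential dominant on $S(\theta_j,\theta_{j+1})$ is also nonzero. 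By the alternation fact these two dominant exponentials are $E_1$ and $E_2$ in one order or the other, so $c_1\ne0$ and $c_2\ne0$, and therefore $\arg(z)=\theta_j$ is non-shortage.

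For part~(b), suppose $f$ decays exponentially to zero on every ray of $S(\theta_j,\theta_{j+1})$. Working in $\Omega_j$, take a ray of $\Omega_j\cap S(\theta_j,\theta_{j+1})$: the exponential dominant on $S(\theta_j,\theta_{j+1})$ tends to infinity there, so if its coefficient in the representation on $\Omega_j$ were nonzero then $|f|$ would be exponentially large on that ray, contrary to the hypothesis. Hence that coefficient is $0$, one of $c_1,c_2$ vanishes, and $\arg(z)=\theta_j$ is shortage. For the ray $\arg(z)=\theta_{j+1}$ we repeat the argument in $\Omega_{j+1}$, testing on a ray of $\Omega_{j+1}\cap S(\theta_j,\theta_{j+1})$; since the exponential dominant on the open sector $S(\theta_j,\theta_{j+1})$ is the same one throughout that sector, its coefficient in the representation valid on $\Omega_{j+1}$ must vanish as well, so $\arg(z)=\theta_{j+1}$ is shortage too.

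The only real work is the preliminary bookkeeping of the first paragraph: one must ensure that the two-term representation with \emph{fixed} coefficients is available on a neighborhood $\Omega_j$ overlapping both sectors bordering $\arg(z)=\theta_j$ — an angular width of about $\pi/q=2\pi/(n+2)$ in the $z$-plane, which is exactly what Liouville's transformation permits before the Stokes phenomenon intervenes — and that the sign of $\re(idz^q)$, hence the alternation of the dominant exponential from one sector to the next, is tracked correctly. Both points are established in the groundwork for Theorems~\ref{theo-growth} and~\ref{theo-zeros} and here only need to be quoted in the form above; everything after that is a comparison of moduli.
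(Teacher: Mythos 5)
Your proof is correct and takes essentially the same approach as the paper's: both rest on the local representation $f\sim z^{-n/4}(c_{j1}F_1+c_{j2}F_2)$ on the full two-sector domain $G_j(R)$ produced by Liouville's transformation and asymptotic integration, the alternation of the dominant exponential across each critical ray (via $h_1=-h_2$), and the dichotomy that both coefficients nonzero gives an oscillatory (sine-type) solution with the zero count of Theorem~\ref{theo-zeros} near the critical translate, while a vanishing coefficient gives a zero-free single exponential near that ray. The only cosmetic difference is that the paper reaches the non-shortage conclusion in~(a) by explicitly noting $g$ is asymptotic to $\sin(z^{(n+2)/2}(1+o(1)))$ and quoting Theorem~\ref{theo-zeros}, and in~(b) first invokes Theorem~\ref{theo-growth}(b) to pin down the cases; you fold these into the single equivalence ``non-shortage $\Leftrightarrow c_1c_2\neq 0$,'' which is a fair summary of the same groundwork.
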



For the relationship between the number of shortage rays and the Nevanlinna functions $N(r,1/f)$, $T(r,f)$, $\delta(0,f)$ 
of a solution $f$ of \eqref{e1}, see \cite{gund,hell-rossi0}.
  
The remainder of this paper is organized as follows. To complete (I), the proofs of  Theorems~\ref{theo-growth}, \ref{theo-zeros} and \ref{theo-growth-zero} are given in Sections~\ref{proof-of-thm1}--\ref{proof-of-thm3}, respectively.  Regarding (II), revised discussions on Liouville's transformation and on asymptotic integration are given in Sections~\ref{Liouville-sec} and
\ref{asymp-int}, respectively. The purpose of Sections~\ref{Liouville-sec} and \ref{asymp-int} is to make a connection between equation \eqref{e1} and the sine equation 
	\begin{equation}\label{sine-equation}
	w''+w=0,
	\end{equation}
so that the well-known growth and zero distribution properties of the three types of solutions
$e^{iz}, e^{-iz}, \sin(z-z_0)$ of \eqref{sine-equation} can be used to prove the
analogous properties of the solutions of \eqref{e1} in Theorems~\ref{theo-growth}--\ref{theo-growth-zero}.  
To make this connection, we first use a routine change of variable to transform \eqref{e1} into a more convenient equation $ g'' + Q(z)g =0$, where the polynomial $ Q(z) $ has been normalized to the form 
	\begin{equation}\label{Q}
	Q(z) = \left\{
	\begin{array}{ll}
	z, & n=1,\\
	z^n + a_{n-2}z^{n-2} + \cdots + a_0,\; & n \geq 2.
	\end{array}
	\right.
	\end{equation}
Then the process of making  the connection consists of the two main steps  in Figure~\ref{process}, 
where $w''+[1-T(\zeta)]w=0$ is a perturbed sine equation with $ T(\zeta) = O(\zeta^{-2}) $ as $ \zeta \to \infty $.
	\begin{figure}[h!]
	\begin{tikzpicture}
	\matrix (m) [matrix of math nodes,row sep=3em,column sep=4em,minimum width=2em]
	{
	f''+P(z)f=0&	g'' + Q(z)g =0\\ 
	 w'' + w=0 & w''+[1-T(\zeta)]w=0\\ 
	};
	\path[-stealth]
	(m-1-1)  edge [<->] node [above] {\small  change of } node[below] {\small variable} (m-1-2)
	(m-1-2)  edge [<->] node [right] {\small  Liouville's transformation}  (m-2-2)
	(m-2-2) edge [<->] node [above] {\small  asymptotic} node [below] {\small integration} (m-2-1);
	\end{tikzpicture}
	\caption{The connection between equations.}
	\label{process}
\end{figure}



\section{Liouville's transformation}\label{Liouville-sec}


Liouville's transformation can be applied to more general equations \cite[p.~340]{hille}, but it is enough for our purposes to apply it to \eqref{e1}, which will somewhat simplify the presentation and discussion. This will transform equation \eqref{e1} into a perturbed sine equation, and asymptotic integration is then used on the perturbed sine equation to obtain the proofs of Theorems~\ref{theo-growth}--\ref{theo-growth-zero}.

\subsection{\sc Preparations}\label{preli-sec}

For convenience, in the rest of the paper we will assume that the polynomial $P(z)$ in \eqref{poly} is normalized, that is, we consider the equation
	\begin{equation} \label{s-e1}
	g'' + Q(z)g = 0
	\end{equation}
with a polynomial coefficient $ Q(z) $ defined as in \eqref{Q}. 
There is no loss of generality in doing this because results about solutions of \eqref{s-e1} can be transformed into results about solutions of \eqref{e1} by observing that if $f$ is a solution of \eqref{e1}, then from $ c=\frac{p_{n-1}}{np_n} $ in \eqref{consts}, it can be verified that 
	\begin{equation}\label{g}
	g(z) = f(\mu z-c) 
	\end{equation}
will be a solution of \eqref{s-e1} when $\mu$ is a constant satisfying $\mu^{n+2} = p_n^{-1}$. 
It turns out to be useful to re-write $Q(z)$ in \eqref{Q} as
	\begin{equation}\label{poly_Q}
	Q(z) = z^n (1+\ell(z)), \quad n\ge 1,
	\end{equation}
where $\ell(z)\equiv 0$ if $n=1$ and 
	$$
	\ell(z) =  \frac{a_{n-2}}{z^2} + \cdots+ \frac{a_0}{z^n}, \quad n\ge 2.
	$$

From \eqref{sectors+rays}, the critical rays of \eqref{s-e1} are $\arg(z)=\psi_j$, where
	\begin{equation}\label{s-critical rays}
	\psi_j = \frac{2 \pi j}{n+2}, \quad j = 0, 1, \ldots , n + 1. 
	\end{equation}
For $ j=0, \ldots, n+1 $, let $ G_j(R) $ denote the domain 
	\begin{equation}\label{G_j}
	G_j(R) = \left\{z\in\C: |z|>R,\, \psi_{j-1}< \arg (z) < \psi_{j+1} \right\},
	\end{equation}
where $ \psi_{-1}=\psi_{n+1}-2\pi $ and $ \psi_{n+2}=\psi_0+2\pi $, and $ R>0 $ is large enough to satisfy the indicated conditions below.
Moreover, for a non-trivial solution $g$ of \eqref{s-e1}, let $ g_j $ denote the restriction of $ g $ 
to $ G_j(R)$.

According to Hille (\cite[p.~340]{hille}, \cite[p.~179]{hille2}), the transformation
	\begin{equation}\label{LT0}
	\left\{ \begin{array}{l}
	\zeta =L(z)=\displaystyle\int_{z_{0}}^{z}Q(\xi)^{1/2}\,d\xi,\\ 
	w(\zeta) =Q(\zeta)^{1/4} g(\zeta),
	\end{array}\right.
	\end{equation}
was first introduced by Liouville in 1837. This transformation was originally used on the positive real line, where there are no questions about branches. However, for the complex variables in \eqref{LT0}, 
questions about branches have to be addressed, and we think it is easier to deal with these
questions if we use \eqref{poly_Q} to re-write \eqref{LT0} in the following manner.
For $ z\in G_j(R) $, we write Liouville's transformation \eqref{LT0} as
	\begin{equation}\label{LT}
	\left\{ 
	\begin{array}{l}
	\zeta =L_j( z) =\displaystyle\int_{z_{0}}^{z}\xi ^{n/2}\left( 1+\ell
	\left( \xi \right) \right) ^{1/2}d\xi , \\ 
	w_j( \zeta) =z^{{n}/{4}}\left( 1+\ell \left( z\right) \right)^{{1}/{4}} g_j(z).
	\end{array}
	\right.   
	\end{equation}
We choose $ z_0=2R e^{i\psi_j} $. Then the points $ z_0,z\in G_j(R) $ can be 
	connected with at most two line segments lying in $ G_j(R) $. For the path of integration in
	\eqref{LT}, we choose this polygonal path.
	Further, we require the constant $ R $ in \eqref{G_j} to be large enough such that
			\begin{equation}\label{RR}
			R\ge \max\left\{1,\sqrt{(n-1) M_0}\right\},
			\end{equation}
		where $M_0=0$ if $n=1$ and
			\begin{equation}\label{MM}
			M_0= \max\{|a_0|,\ldots,|a_{n-2}|\},\quad n\geq 2.
			\end{equation}
For $ z\in G_j(R) $, where $ R $ satisfies \eqref{RR}, we have $\ell(z)\equiv 0$ if $n=1$ and
	\begin{equation}\label{ell}
	|\ell(z)| \leq \frac{1}{|z|^{2}} \sum_{s=2}^{n}\left|a_{n-s}\right| 
	\leq \frac{(n-1) M_{0}}{|z|^{2}}<1, \quad |z|>R,\quad n\geq 2.
	\end{equation}
It follows that $ 1+\ell(z) $ lies entirely in the right half-plane for all $ |z|>R $. 

In \eqref{LT} and throughout the rest of this paper, we use the following branches when  $ z $ is confined to $ G_j(R) $. For $ (1+\ell(z))^{1/2} $ and $ (1+\ell(z))^{1/4} $, we always use the principal branch 
	\begin{equation}\label{PB}
	 -\pi< \arg (1+\ell(z)) \le \pi.
	\end{equation}
When an expression $ w =w(z)$ does not represent  $ 1+ \ell(z) $ and $ k $ is a positive integer,  $w^{k/2}$ will be defined by $w^{k/2}=(w^{1/2})^k$, where 
\begin{equation}\label{SR}
w^{{1}/{2}}=|w|^{{1}/{2}} \exp\left( i\frac{\arg (w)}{2}\right),
\quad \psi_j - \pi <\arg (w)\leq \psi_j+\pi,
\end{equation}
and $w^{k/4}$ will be defined by $w^{k/4}=(w^{1/4})^k$, where
\begin{equation}\label{FR}		
w^{{1}/{4}}=|w|^{{1}/{4}}\exp \left( i\frac{\arg (w)}{4}\right),
\quad \psi_j - \pi <\arg (w)\leq \psi_j+\pi.
\end{equation}
For more information regarding these branches for $ w^{1/2} $ and $ w^{1/4} $, see Appendix~\ref{appendix}.

It follows from \eqref{ell} that
	$$
	 (1+\ell(z))^{1/2} \quad \text{and} \quad (1+\ell(z))^{1/4} 
	 $$
are analytic in $ \{z:|z|>R\} $.
Then the functions 
	\begin{equation}\label{AB}
	A(z)=z^{{n}/{2}}\left( 1+\ell \left( z\right) \right) ^{{1}/{2}}
	\quad\textnormal{and}\quad 
	B(z)=z^{{n}/{4}}\left( 1+\ell \left( z\right) \right)^{{1}/{4}}
	\end{equation}
are analytic in the domain $\left\{ z:|z|>R,\, \psi_j-\pi <\arg (z)<\psi_j +\pi \right\}$. In particular, they are analytic in $ G_j(R) $, and with the choice of $ z_0 $ and the path of integration as above, it follows that $ L_j(z) $ is analytic in $ G_j (R)$ as well. Liouville's transformation \eqref{LT} is related to the polynomial $Q(z)$ in \eqref{poly_Q} by means of
	$$
	A(z)^2=Q(z)=B(z)^4.
	$$

In Section \ref{error-term-sec} we will find an asymptotic representation for $L_j(z)$ in $G_j(R)$,
which will be used to prove that $L_j(z)$ is one-to-one in Section~\ref{uniqueness-sec}. The latter
shows that $w_j(\zeta)$ in \eqref{LT} is well-defined. In Section~\ref{perturbed-sine} we then
proceed to show that Liouville's transformation \eqref{LT} transforms equation \eqref{s-e1} into a perturbed sine equation of the form
	\begin{equation}\label{e2}
	w''_j(\zeta )+\left[ 1-T(\zeta )\right] w_j(\zeta )=0,		
	\end{equation}
where $ T(\zeta)=O(\zeta^{-2}) $ as $ \zeta \to\infty$.  The interplay between \eqref{s-e1} and \eqref{e2} turns out to be crucial in Section~\ref{asymp-int}, where we will show that the solutions of \eqref{e2}
are asymptotic to the three known types of solutions $e^{iz}, e^{-iz}, \sin(z-z_0)$ of the sine equation \eqref{sine-equation}. Finally, in Section~\ref{geometric-sec} we
will discuss a geometric property satisfied by $L_j(z)$, which will be needed in proving Theorem~\ref{theo-zeros}.

\subsection{\sc Asymptotic growth of $ L_j(z) $}\label{error-term-sec}

Lemma~\ref{L2} below gives an asymptotic representation for the function $L_j(z)$ in \eqref{LT}
when $z$ is confined to $G_j(R)$. This representation will be used later to prove that the function $w_j(\zeta)$ in \eqref{LT} is well-defined, among other things.

\begin{lemma}\label{L2}
	The function $\zeta =L_j(z)$ in \eqref{LT} satisfies
	\begin{equation}\label{asL}
	\zeta =L_j(z)=\frac{2}{n+2} z^{(n+2)/2}\left( 1+K(z)\right) ,\quad z\in G_j(R),  
	\end{equation}
	where  $|K(z)|=O(e_n(|z|))$ as $z\rightarrow \infty $ in $G_j(R)$, where $e_n(r)$ is in \eqref{er}. Moreover, we have $|\z|\sim \frac{2}{n+2}|z|^{(n+2)/2}$, as $|z|\to\infty$, and 
	\begin{equation}\label{K(z)}
	|\arg(1+K(z))| =O(e_n(|z|)), \quad |z| \to \infty.
	\end{equation}
\end{lemma}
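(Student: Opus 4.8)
The plan is to split off the dominant power $z^{(n+2)/2}$ from the integral defining $L_j(z)$ and estimate the remainder. Write
	$$
	L_j(z) = \int_{z_0}^z \xi^{n/2}\,d\xi + \int_{z_0}^z \xi^{n/2}\left[(1+\ell(\xi))^{1/2}-1\right]d\xi.
	$$
The first integral equals $\frac{2}{n+2}\bigl(z^{(n+2)/2}-z_0^{(n+2)/2}\bigr)$, so it contributes the main term $\frac{2}{n+2}z^{(n+2)/2}$ plus a constant. The strategy is then to show that the second integral, together with the constant $-\frac{2}{n+2}z_0^{(n+2)/2}$, is $O\bigl(|z|^{(n+2)/2}e_n(|z|)\bigr)$, so that dividing by $\frac{2}{n+2}z^{(n+2)/2}$ yields $K(z)$ with $|K(z)|=O(e_n(|z|))$.

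The key estimate is on the integrand of the second integral. Since $\ell(\xi)=O(\xi^{-2})$ by \eqref{ell} and $1+\ell(\xi)$ lies in the right half-plane, the elementary bound $|(1+w)^{1/2}-1|\le |w|$ for $w$ in the right half-plane (or a Taylor estimate) gives $|(1+\ell(\xi))^{1/2}-1|=O(|\xi|^{-2})$; for $n=1$ this term vanishes identically. Hence $|\xi^{n/2}[(1+\ell(\xi))^{1/2}-1]|=O(|\xi|^{n/2-2})$. I would then integrate this bound along the polygonal path from $z_0$ to $z$ chosen in \eqref{LT}, which has length comparable to $|z|$ and stays in $G_j(R)$. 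Parametrizing by arc length and using that $|\xi|$ is bounded below by a constant multiple of $|z|$ on the outer portion of the path (and bounded below by $R$ everywhere), one obtains that the second integral is $O(|z|^{\max\{n/2-1,\,0\}})$ when $n>2$, $O(\log|z|)$ when $n=2$, and $O(1)$ when $n=1$. Comparing with $|z|^{(n+2)/2}e_n(|z|)$, which equals $|z|^{(n-2)/2}$ for $n>2$, $|z|^{(n-2)/2}\log|z|$ for $n=2$, and $|z|^{(n-1)/2}$ for $n=1$, one checks in each of the three cases that the remainder is $O\bigl(|z|^{(n+2)/2}e_n(|z|)\bigr)$; the constant $z_0^{(n+2)/2}$ is absorbed since $e_n(|z|)\cdot|z|^{(n+2)/2}\to\infty$. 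This proves \eqref{asL} and the estimate on $|K(z)|$, and the asymptotic $|\zeta|\sim\frac{2}{n+2}|z|^{(n+2)/2}$ follows immediately since $K(z)\to 0$.

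For \eqref{K(z)}, I would use that $|\arg(1+K(z))|\le \frac{\pi}{2}\,\bigl|\tfrac{\text{Im}\,K(z)}{1+\text{Re}\,K(z)}\bigr|$ or, more simply, that for $|K|$ small one has $|\arg(1+K)|=|\text{Im}\log(1+K)|\le \frac{|K|}{1-|K|}=O(|K|)$; combined with $|K(z)|=O(e_n(|z|))$ this gives the claim. Care is needed that the branch of $\arg(1+K(z))$ here is the principal one and that $K(z)\to 0$ keeps $1+K(z)$ away from the negative real axis, which is automatic once $R$ is large.

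The main obstacle is the path-of-integration estimate: one must verify that along the chosen two-segment polygonal path in $G_j(R)$, the modulus $|\xi|$ does not dip far below $|z|$ in a way that would inflate the integral of $|\xi|^{n/2-2}$ — in particular for $n=1,2$ where the exponent is negative and large $|\xi|$ \emph{helps}, so the worry is instead that the path's total length is genuinely $O(|z|)$ and that the portion near $z_0$ contributes only $O(1)$. Since $z_0=2Re^{i\psi_j}$ is fixed and the path first goes radially (or along a short segment at radius comparable to $R$) and then essentially radially out to $z$, this is a routine but slightly delicate geometric bookkeeping; isolating it as the one nontrivial point, the rest is direct estimation.
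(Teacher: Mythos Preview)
Your approach is correct and genuinely different from the paper's. The paper expands $(1+\ell(\xi))^{1/2}$ as a binomial series $1+c_2\xi^{-2}+c_3\xi^{-3}+\cdots$, multiplies by $\xi^{n/2}$, and integrates term by term; this yields an explicit series for $K(z)$ whose leading terms immediately give the $O(e_n(|z|))$ bound and also reveal the source of the logarithm when $n$ is even (a term $c_{p+1}\xi^{-1}$ appears in the integrand when $n=2p$). Your splitting $L_j(z)=\int\xi^{n/2}\,d\xi+\int\xi^{n/2}\bigl[(1+\ell)^{1/2}-1\bigr]\,d\xi$ followed by the crude bound $|(1+\ell)^{1/2}-1|=O(|\xi|^{-2})$ is more elementary and avoids all series manipulation; it trades explicit information about $K(z)$ for a shorter argument, at the price of the path bookkeeping you flag. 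Since the integrand is analytic in $G_j(R)$, you are free to choose a convenient path for the estimate (say, radial from $z_0$ to $|z|e^{i\psi_j}$, then a chord to $z$), and with that choice the bounds $O(|z|^{n/2-1})$, $O(\log|z|)$, $O(1)$ for $n>2$, $n=2$, $n=1$ go through cleanly. One small slip: you justify absorbing the constant $-\tfrac{2}{n+2}z_0^{(n+2)/2}$ by claiming $e_n(|z|)\,|z|^{(n+2)/2}\to\infty$, but for $n=1$ this product is identically $1$; what you actually need, and have, is that it is bounded below by a positive constant. For \eqref{K(z)} the paper uses the geometric bound $|\arg(1+K)|\le\arcsin|K|\le\tfrac{\pi}{2}|K|$, which is essentially the same as your $|\mathrm{Im}\log(1+K)|$ estimate.
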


\begin{proof}
	From \eqref{ell}, \eqref{PB} and the binomial series \cite[pp.~118--119]{re}, the function  $(1+\ell(z))^{1/2}$  can be expressed, when $ n\ge 2$,  as 
	\begin{equation}\label{e5}
	(1+\ell(z))^{1/2} =\sum_{k=0}^{\infty }\alpha _{k} \ell(z)^k= \sum_{k=0}^{\infty }\alpha _{k}\left( \frac{a_{n-2}}{z^{2}}+\frac{a_{n-3}}{z^{3}}+\cdots +\frac{a_{0}}{z^{n}}\right)^{k},\quad |z|>R,
	\end{equation}
	where $\alpha_0=1$ and
	$$
	\alpha _{k}=\frac{1}{k!}\left( \frac{1}{2}\right) \left( \frac{1}{2}
	-1\right) \cdots \left( \frac{1}{2}-k+1\right) ,\quad k\geq 1.
	$$
	After rearranging the terms in the last sum in \eqref{e5}, we obtain 
	\begin{equation}\label{e6}
	(1+\ell(z))^{1/2}= 1+\frac{c_{2}}{z^{2}}+\frac{c_{3}}{z^{3}}+\cdots +\frac{c_{m+1}}{z^{m+1}}+\cdots,  \quad |z|>R,
	\end{equation}
	where $c_{2}=\alpha_{1}a_{n-2}$, $c_{3}=\alpha _{1}a_{n-3}$, $c_4= \alpha_1 a_{n-4} + \alpha_2 a_{n-2}^2$, and so on. 
	
	From \eqref{LT} and \eqref{e6}, we obtain two cases:
	
	(1) If $n$ is even, say $n=2p$, where $p$ is a positive integer, then 
	\begin{align*}
	\zeta & =\int_{z_0}^{z} \left( \xi^{p}+c_{2}\xi^{p-2}+c_{3}\xi^{p-3}+\cdots 
	+c_{p+1}\xi^{-1}+\cdots \right) d\xi\\
	& =\left( \frac{1}{p+1}z^{p+1}+\frac{1}{p-1}c_{2}z^{p-1}+\cdots 
	+c_{p+1}\log z+\cdots \right)+C(z_{0})\\
	&=\frac{1}{p+1}\,z^{p+1}\left( 1+\left[\frac{p+1}{p-1}c_{2}\,\frac{1}{z^{2}}+\cdots 
	+\bigg( (p+1)c_{p+1}\log z+(p+1)C(z_{0})\bigg) 
	\frac{1}{z^{p+1}}+\cdots \right] \right),
	\end{align*}
where $C(z_{0})$ is a constant depending on $z_{0}$ and where the branch for $\log z=\log |z|+i\arg (z)$ is $ \psi_j-\pi<\arg (z) \le \psi_j+\pi$. The above terms containing $p-1$ do not appear when $p=1$, i.e., when $n=2$.

	(2) If $n\geq 3$ is odd, then the logarithmic term does not appear in the expression of $ \zeta $. In fact, we have 
	\begin{eqnarray*}
		\zeta &=&\left( \frac{2}{n+2}z^{\frac{n}{2}+1}+\frac{2}{n-2}c_{2}z^{\frac{n}{2}-1}+\cdots +\frac{2}{n-2m}c_{m+1}z^{\frac{
				n}{2}-m}+\cdots \right) +C(z_{0})\\
		&=&\frac{2}{n+2}\,z^{\frac{n}{2}+1}\left( 1+\left[ \frac{n+2}{n-2}c_{2}\,\frac{1}{z^{2}}+\cdots +\frac{n+2}{n-2m}c_{m+1}\frac{1}{z^{m+1}}+\cdots \right] +\frac{(n+2)C(z_{0})}{2}\frac{1}{z^{\frac{n}{2}+1}}\right).
	\end{eqnarray*}

From Cases (1) and (2), and the easy case when $ n=1 $, we obtain
	\begin{equation*}
	\zeta =\frac{2}{n+2}z^{(n+2)/2}\left( 1+K(z)\right), 
	\end{equation*}
where $K(z)$ is analytic in $\left\{ z:|z|>R,\, \psi_j-\pi <\arg (z)<\psi_j +\pi \right\}$. By considering the cases $n=1$, $n=2$ and $n>2$ separately, we find that $ K(z) $ satisfies $|K(z)|=O(e_n(|z|))$ as $z\rightarrow \infty $, where $ e_n(r) $ is given in \eqref{er}. This proves the representation in \eqref{asL}.
	
To see that $|\z|\sim \frac{2}{n+2}|z|^{(n+2)/2}$ as $|z|\to\infty$ in $G_j(R)$, it suffices to observe that $|K(z)|=O(e_n(|z|))$ as $z\rightarrow \infty $, where $e_n(|z|)\rightarrow 0$ as $|z|\to\infty$ by \eqref{er}.

	It remains to prove \eqref{K(z)}. Assume that $ R>0 $ is large enough such that  $|K(z)|<1$ 	whenever $|z|>R$.  If $K(z)\neq 0$, define the circle $\mathcal{C}_{z}=\{w :|w -1|=|K(z)|\}$, and let $2\varphi_{z}$ denote the angle which the circle $\mathcal{C}_{z}$ subtends at the origin.
	
	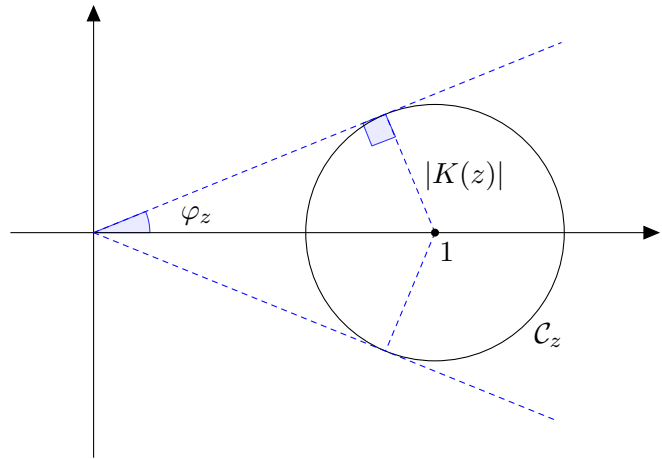
\begin{figure}[h!]
		\definecolor{ttttff}{rgb}{0.2,0.2,1}
		\definecolor{qqqqff}{rgb}{0,0,1}
		\begin{tikzpicture}[line cap=round,line join=round,>=triangle 45,x=1.0cm,y=1.0cm]
		\clip(-1.2,-3) rectangle (7.61,3.03);
		\draw [shift={(0,0)},line width=0.4pt,color=ttttff,fill=ttttff,fill opacity=0.1] (0,0) -- (0:0.74) arc (0:22.26:0.74) -- cycle;
		\draw[line width=0.4pt,color=ttttff,fill=ttttff,fill opacity=0.1] (3.54,1.45) -- (3.66,1.15) -- (3.97,1.27) -- (3.84,1.57) -- cycle; 
		\draw [->] (0,-2.98) -- (0,3.02);
		\draw [->] (-1.09,0) -- (7.45,0);
		\draw(4.49,0) circle (1.7cm);
		\draw [dash pattern=on 2pt off 2pt,color=qqqqff] (0,0)-- (6.15,2.52);
		\draw [dash pattern=on 2pt off 2pt,color=qqqqff] (0,0)-- (6.06,-2.48);
		\draw [dash pattern=on 2pt off 2pt,color=qqqqff] (4.49,0)-- (3.84,1.57);
		\draw [dash pattern=on 2pt off 2pt,color=qqqqff] (4.49,0)-- (3.84,-1.57);
		\draw (1,0.5) node[anchor=north west] {\small $\varphi_z$};
		\draw (4.2,1.1) node[anchor=north west] {\small $|K(z)|$};
		\draw (4.4,0.02) node[anchor=north west] {\small $1$};
		\draw (5.65,-1.05 ) node[anchor=north west] {\small $\mathcal{C}_{z}$};
		
		\begin{scriptsize}
		\fill [color=black] (4.49,0) circle (1.5pt);
		\end{scriptsize}
		\end{tikzpicture}
		\caption{The circle $\mathcal{C}_{z}$ and the angle $2\varphi_z$.}
		\label{fig01}
	\end{figure}
	
	We find that every $w \in \mathcal{C}_{z}$ satisfies $|\arg (w) |\leq\varphi_{z}$. In particular, $|\arg(1+K(z))|\leq\varphi_{z}$.  Therefore,
	\begin{equation*}
	|\arg(1+K(z)) |\le\varphi_{z}=\arcsin \left( \left\vert K(z)\right\vert \right) 
	\le \frac{\pi}{2}|K(z)| = O(e_n(|z|)), \quad |z| >R.
	\end{equation*}
	The lemma is now proved.
\end{proof}


\subsection{\sc Univalence of $L_j(z)$}\label{uniqueness-sec}

In this section we prove that the function $L_j(z)$ in \eqref{LT} is one-to-one (univalent) in $G_j(R)$. 
This proves that $w_j(\zeta)$ in \eqref{LT} is well-defined, so that the transformation of \eqref{s-e1} into 
\eqref{e2} is justified.

For any $ j\in \{0, \ldots, n+1\} $, the function
	\begin{equation}\label{w}
	u(z)=\frac{2}{n+2} z^{(n+2)/2}
	\end{equation}
has the following two properties:
\begin{enumerate}
	\item[(i)] $ u(z) $ maps the critical ray $ \arg (z) = \psi_j $ in \eqref{s-critical rays} onto the positive real axis 
	if $ j $ is even and onto the negative real axis if $ j $ is odd.
	\item[(ii)]  $ u(z) $ is univalent in each sector
		\begin{equation}\label{U}
		U_j=\left\{z\in\C : |z|>0,\ \psi_{j-1}<\arg (z)<\psi_{j+1}\right\},
		\end{equation}
	and maps each $U_j$ onto the slit plane $ V^+ $ if  $j$ is even or onto $ V^- $ if $j$ is odd, where
		\begin{equation}
		V^+ = \C \setminus \left\{w\in\R: w \le 0 \right\}\quad \text{and} \quad
		V^- =\C \setminus \left\{w\in\R: w \ge 0 \right\}.
		\end{equation}
\end{enumerate}

We proceed to prove that $L_j(z)$ in \eqref{LT} has similar properties in $G_j(R)$ as $u(z)$ 
has in $U_j$, that is, $L_j(z)$ is univalent in $ G_j(R) $ and maps $ G_j(R) $ onto a domain containing
	\begin{equation}\label{image G_j}
		\tilde{G}_j(\delta,\tilde{R}) 
		= \left\{\zeta\in\C : |\zeta|>\tilde{R},\ |\arg(\zeta)-\pi j|\leq\pi-\delta\right\},
	\end{equation}
for a given small $ \delta>0 $. This implies that the function $w_j(\zeta)$ in \eqref{LT} is analytic in a domain containing $ \tilde{G}_j(\delta,\tilde{R}) $. The following lemma is similar to \cite[Lemma~4.3.6]{langley0}.

\begin{lemma}\label{L3-new}
For any $ j\in \{0,1, \ldots, n+1\} $, the following two properties hold.
\begin{itemize}
\item[\textnormal{(1)}]  The function $L_j(z)$ in \eqref{LT} is one-to-one in the domain $ G_j(R) $, provided that $ R>0 $ is sufficiently large.
	
\item[\textnormal{(2)}] Let $ \delta>0 $ be small enough. Then there exists $ \tilde{R}>0 $ large enough such that $ L_j(G_j(R)) $ contains $ \tilde{G}_j(\delta,\tilde{R})  $ defined in \eqref{image G_j}.
	\end{itemize}
\end{lemma}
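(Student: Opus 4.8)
The plan is to transfer the two known properties (i)–(ii) of the model map $u(z)=\frac{2}{n+2}z^{(n+2)/2}$ on the sector $U_j$ to the perturbed map $L_j(z)$ on $G_j(R)$, using the asymptotic representation $\zeta=L_j(z)=u(z)(1+K(z))$ with $|K(z)|=O(e_n(|z|))$ from Lemma~\ref{L2}, together with the bound \eqref{K(z)} on $|\arg(1+K(z))|$. The key point is that $K(z)\to 0$ as $z\to\infty$ in $G_j(R)$, so $L_j$ is a small perturbation of the univalent map $u$ on the overlapping sector, and standard arguments (e.g.\ an argument principle or Hurwitz-type comparison, or the Rad\'o–Kneser–Choquet style injectivity criterion for perturbations of univalent maps) should force $L_j$ to inherit univalence once $R$ is large. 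One convenient route: since $u$ is univalent on $U_j$ with $L_j = u\cdot(1+K)$ and $K$ small, one checks that the composition $u^{-1}\circ L_j$ is defined and close to the identity on $G_j(R)$ for $R$ large (using that $L_j(G_j(R))$ stays inside the slit plane $V^{\pm}$ away from the slit by \eqref{K(z)}), hence injective.

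For part (1), I would argue as follows. First, fix $z_1,z_2\in G_j(R)$ with $L_j(z_1)=L_j(z_2)$. Using \eqref{asL}, this gives $z_1^{(n+2)/2}(1+K(z_1))=z_2^{(n+2)/2}(1+K(z_2))$, so $|z_1|^{(n+2)/2}|1+K(z_1)| = |z_2|^{(n+2)/2}|1+K(z_2)|$; since $|1+K(z_i)|\to 1$, the moduli $|z_1|,|z_2|$ are comparable, and a bootstrapping/continuity argument on the relation $z_1^{(n+2)/2}/z_2^{(n+2)/2} = (1+K(z_2))/(1+K(z_1))$, together with the fact that both arguments of $z_i$ lie in the sector $(\psi_{j-1},\psi_{j+1})$ of opening $4\pi/(n+2)<2\pi$, forces $z_1=z_2$ once $R$ is large enough that the right-hand side is close to $1$. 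An alternative and cleaner approach is to invoke the connectedness of $G_j(R)$ and the fact that $L_j' = z^{n/2}(1+\ell(z))^{1/2}$ is nonvanishing on $G_j(R)$, so $L_j$ is locally univalent, and then use the argument principle on the image boundary (which, by the asymptotics, is a small perturbation of $\partial(u(U_j))$) to conclude global univalence; this is the structure of \cite[Lemma~4.3.6]{langley0} that the statement cites, and I would follow that template.

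For part (2), the goal is to show the image $L_j(G_j(R))$ contains the truncated full-angle sector $\tilde G_j(\delta,\tilde R)$. The idea is that $u(U_j)=V^\pm$ is the slit plane, so it certainly contains $\tilde G_j(\delta,\tilde R)$ for any $\tilde R$; since $L_j$ agrees with $u$ up to the factor $(1+K(z))$ whose modulus and argument both tend to $1$ and $0$ respectively as $|z|\to\infty$ (Lemma~\ref{L2}), the image $L_j(G_j(R))$ differs from a large piece of $V^\pm$ only near the boundary ray (the slit) and near $|\zeta|=$ const. Concretely, I would show: for $\zeta_0\in \tilde G_j(\delta,\tilde R)$ with $\tilde R$ large, the point $w_0 = u^{-1}(\zeta_0)\in U_j$ has large modulus and argument bounded away from $\psi_{j\pm1}$ (by how much, depending on $\delta$), hence $w_0\in G_j(R)$; then $L_j(w_0)$ is within $o(1)$ (relative error) of $\zeta_0$, so a continuity/degree argument (e.g.\ Brouwer or Rouch\'e applied to $L_j(z)-\zeta_0$ on a small region around $w_0$) produces a genuine preimage of $\zeta_0$ in $G_j(R)$.

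The main obstacle I expect is keeping careful track of the \emph{branch choices} and the geometry near the slit: one must ensure that as $z$ ranges over $G_j(R)$ (whose angular opening is $4\pi/(n+2)$, which can be close to $2\pi$ when $n=1$), the argument of $L_j(z)$, computed with the fixed branches \eqref{SR}–\eqref{FR} and the polygonal path of integration, genuinely sweeps out an interval of length close to $2\pi$ centered at $\pi j$, and does not "wrap" or fail to be monotone due to the perturbation $\ell$. This is where the estimate \eqref{K(z)} on $|\arg(1+K(z))|=O(e_n(|z|))$ is essential, and where the choice of $z_0=2Re^{i\psi_j}$ and the polygonal path must be used to pin down the additive constant $C(z_0)$ in the asymptotic expansion; I would handle the $n=1$ case (where $\ell\equiv 0$ and $L_j$ is literally $\frac{2}{3}z^{3/2}$ up to a constant) separately and explicitly, since there univalence and the image are immediate, and treat $n\ge 2$ via the perturbation argument above.
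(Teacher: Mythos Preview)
Your overall strategy---transfer univalence and surjectivity from the model map $u(z)=\frac{2}{n+2}z^{(n+2)/2}$ to $L_j$ via the asymptotic $L_j=u(1+K)$---is exactly the paper's, but your execution of part~(1) has a gap. Your first approach assumes $L_j(z_1)=L_j(z_2)$ and tries to bootstrap from the relation $z_1^{(n+2)/2}/z_2^{(n+2)/2}=(1+K(z_2))/(1+K(z_1))$ being close to~$1$; but ``close to~$1$'' does not force ``equal to~$1$'', and the argument as sketched is circular (you would need a lower bound on $|u(z_1)-u(z_2)|$ in terms of $|z_1-z_2|$, which is precisely injectivity of $L_j$ in disguise). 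Your second approach (argument principle on the image boundary) could be made to work but requires part~(2) first and a careful tracing of $\partial G_j(R)$.

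The paper's route is shorter and avoids both issues: it computes the derivative in the $u$-variable,
\[
\frac{dL_j}{du}=\frac{dL_j}{dz}\Big/\frac{du}{dz}=(1+\ell(z))^{1/2}=1+H(z),\qquad H(z)=O(z^{-2}),
\]
takes $R$ large enough that $|H(z)|<\tfrac12$, and then for distinct $z_1,z_2\in G_j(R)$ with $u_k=u(z_k)$ integrates along a path in $G_j(R)$ to obtain
\[
|L_j(z_1)-L_j(z_2)|\;\ge\;|u_1-u_2|-\int_{u_2}^{u_1}|H(z)|\,|du|\;\ge\;\tfrac12\,|u_1-u_2|\;>\;0,
\]
using that $u$ is already univalent on $U_j\supset G_j(R)$. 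This is the clean ``perturbation of the identity in the $u$-coordinate'' argument you were reaching for, but done at the level of derivatives rather than values. For part~(2) the paper's argument is close to your boundary/degree idea: it tracks $L_j(\partial G_j(R))$, uses $|K(z)|<1$ on the circular arc and \eqref{K(z)} on the rays $\arg z=\psi_{j\pm1}$ to place the image boundary outside $\tilde G_j(\delta,\tilde R)$, and concludes by univalence.
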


\begin{proof}
	(1) From \eqref{LT} and \eqref{w}, we have
		$$
		\frac{dL_j}{dz}=z^{n/2} (1+\ell(z))^{1/2} = \frac{du}{dz}(1+\ell(z))^{1/2}, \quad z\in G_j(R).
		$$
	Combining this with \eqref{e6}, we deduce that
		$$
		\frac{dL_j}{du}=\frac{dL_j}{dz}\frac{dz}{du}=(1+\ell(z))^{1/2}=1+H(z),\quad z\in G_j(R),
		$$
	where $ H(z) = O(z^{-2}) $. We may assume that $R\geq R_0$ is large enough so that $|H(z)|<1/2$ for $z\in G_j(R)$. Suppose that $z_1,z_2\in G_j(R)$ are distinct, and set $u_k=u(z_k)$. Integrating along any contour in $ G_j(R) $ joining $ z_1 $ and $ z_2 $, it follows by the univalency of $u$ that
		\begin{eqnarray*}
			L_j(z_1)-L_j(z_2)&=&\int_{z_2}^{z_1}\frac{dL_j}{dz}\, dz=\int_{u_2}^{u_1}\frac{dL_j}{du}\, du
			=\int_{u_2}^{u_1}(1+H(z))\, du,
		\end{eqnarray*}
	from which
		$$
		|L_j(z_1)-L_j(z_2)|\geq \left|\int_{u_2}^{u_1}\, du\right|-\int_{u_2}^{u_1}|H(z)|\, |du|\\
		\geq |u_1-u_2|/2>0.
		$$	
	This shows that $L_j(z)$ is univalent in $G_j(R)$.
	
	(2) Take $ R> 0 $ large enough such that $K(z)$ in \eqref{asL} satisfies $ |K(z)|<1 $ for $z\in G_j(R)$. By the discussion related to  \eqref{AB} 
	the map $ \zeta = L_j(z) $ is analytic in $ G_j(R) \cup \partial G_j(R) $. The function $ u(z) $ 
	in \eqref{w} maps the boundary $ \partial G_j(R) $ onto the boundary of the domain
		\begin{equation}\label{Du}
		\left\{u: |u|> \frac{2}{n+2} R^{(n+2)/2},\,  |\arg (u) - \pi j |< \pi \right\}.
		\end{equation} 
	By Lemma~\ref{L2} we may write $ L_j(z) = u(z) (1+K(z)) $. Thus the image of $ G_j(R) $ under $ L_j(z) $ is the domain \eqref{Du} with a deformation on the boundary by the factor of $1+K(z) $. First, when $ z $ wanders along the circular part of $ \partial G_j(R) $, we see that $ L_j(z) $ traces a simple curve, which lies entirely in the disc $D(0, \tilde{R}_1)$ of radius $ \tilde{R}_1> \frac{4}{(n+2)}R^{(n+2)/2} $. Second, when $ z $ wanders along the critical rays $ \arg (z) = \psi_{j\pm 1} $, we see from \eqref{asL} that
		\begin{equation*}
		\begin{split}
		\arg(\zeta) &= \frac{n+2}{2} \arg (z) + \arg(1+K(z))\\
		&= \pi (j\pm 1) +\arg(1+K(z)).
		\end{split}
		\end{equation*}
Hence, we obtain from \eqref{K(z)} and \eqref{er} that
		$$
		|\arg(\zeta) - \pi (j\pm 1)| = |\arg(1+K(z))|\to 0  
		$$
as $|z|\to \infty$ such that $\arg (z) = \psi_{j\pm 1} $.	Then there exists $ \tilde{R}> \tilde{R}_1 $ such that 
		$$
		|\arg(\zeta) - \pi (j\pm 1)|< \delta, \quad |\zeta|>\tilde{R}.
		$$
From this, one can see that the boundary of the image $ L_j(G_j(R)) $  of $ G_j(R) $  is outside the region  $ \tilde{G}_j(\delta, \tilde{R}) $  in \eqref{image G_j} and therefore, we easily deduce the assertion.
\end{proof}


\subsection{\sc The perturbed sine equation}\label{perturbed-sine}

Lemma~\ref{Q-P} below shows that \eqref{LT} transforms equation \eqref{s-e1} into equation \eqref{e2}.
The result is briefly stated in \cite[p.~180]{hille2}. Our proof relies on simple manipulations
on the functions $A(z),B(z)$ in \eqref{AB}.

\begin{lemma}\label{Q-P}
Let $g(z)\not\equiv 0$ be a solution of \eqref{s-e1}, and let $ g_j(z) $ be its restriction to $ G_j(R) $. Then $w_j(\zeta )$ defined in \eqref{LT} satisfies an equation of the form \eqref{e2}, where $ T(\zeta) $  is analytic in a domain containing the region $ \tilde{G}_j(\delta, \tilde{R}) $, and 
		\begin{equation}\label{T}
		T(\zeta ) = \frac{1}{4}\left( \frac{Q^{\prime \prime }(z)}{Q(z)^{2}}
		-\frac{5}{4}\frac{Q^{\prime}(z)^{2}}{Q(z)^{3}}\right) = O\left( \frac{1}{\zeta^{2}}\right) ,\quad \zeta \rightarrow
		\infty .
		\end{equation}
\end{lemma}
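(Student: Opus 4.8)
The plan is to carry out the classical Liouville computation directly, using the analytic functions $A(z)=Q(z)^{1/2}$ and $B(z)=Q(z)^{1/4}$ from \eqref{AB}, and then to transfer the resulting bound from the $z$-variable to the $\zeta$-variable via Lemma~\ref{L2}. Write $'=d/dz$ and use $\dot{\phantom{w}}=d/d\zeta$. By \eqref{LT} and \eqref{AB} we have $d\zeta/dz=A(z)=Q(z)^{1/2}$, and since $Q(z)=B(z)^4$ is non-vanishing on $G_j(R)$ by \eqref{ell}, while $L_j$ is univalent on $G_j(R)$ with image containing $\tilde G_j(\delta,\tilde R)$ by Lemma~\ref{L3-new}, the function $w_j(\zeta)=B(z)g_j(z)$ is analytic on $L_j(G_j(R))\supseteq\tilde G_j(\delta,\tilde R)$, and there $g_j=Q^{-1/4}w_j$ with $Q^{-1/4}=B^{-1}$; in particular the chain rule gives $dw_j/dz=Q^{1/2}\dot w_j$.

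First I would differentiate $g_j=\phi w_j$, with $\phi=Q^{-1/4}$, twice in $z$. This produces $g_j'=\phi'w_j+\phi Q^{1/2}\dot w_j$ and then $g_j''=\phi''w_j+\bigl(2\phi'Q^{1/2}+\tfrac12\phi Q^{-1/2}Q'\bigr)\dot w_j+\phi Q\,\ddot w_j$. The algebraic point that makes Liouville's transformation work is that, with $\phi=Q^{-1/4}$, one has $\phi'=-\tfrac14Q^{-5/4}Q'$, so the coefficient of $\dot w_j$ equals $-\tfrac12Q^{-3/4}Q'+\tfrac12Q^{-3/4}Q'=0$. Substituting the two surviving terms into $g_j''+Qg_j=0$ and dividing by $\phi Q=Q^{3/4}$ yields equation \eqref{e2}, that is, $\ddot w_j+(1-T)w_j=0$ with $T=-\phi''/(\phi Q)$.

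Next I would make $T$ explicit. Differentiating $\phi=Q^{-1/4}$ once more gives $\phi''=\tfrac{5}{16}Q^{-9/4}(Q')^2-\tfrac14Q^{-5/4}Q''$, hence $\phi''/(\phi Q)=\tfrac{5}{16}Q^{-3}(Q')^2-\tfrac14Q^{-2}Q''$, which is precisely $-T$ as written in \eqref{T}. Analyticity of $T$ in a domain containing $\tilde G_j(\delta,\tilde R)$ is then immediate: expressed in the $z$-variable, $T$ is a rational function of $Q,Q',Q''$ with $Q$ non-vanishing on $G_j(R)$, hence analytic there, and $z=L_j^{-1}(\zeta)$ is analytic on $L_j(G_j(R))\supseteq\tilde G_j(\delta,\tilde R)$ by Lemma~\ref{L3-new}.

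Finally, for the size of $T$, I would use $Q(z)=z^n(1+\ell(z))$ with $\ell(z)=O(z^{-2})$ from \eqref{poly_Q}--\eqref{ell}: when $n=1$ this is trivial since $Q(z)=z$, and when $n\ge2$ one has $Q'(z)=nz^{n-1}(1+O(z^{-2}))$ and $Q''(z)=n(n-1)z^{n-2}(1+O(z^{-2}))$, so that both $Q''/Q^2$ and $(Q')^2/Q^3$ are $O(z^{-n-2})$ as $z\to\infty$ in $G_j(R)$; hence $T=O(z^{-n-2})$. By Lemma~\ref{L2}, $|\zeta|\sim\tfrac{2}{n+2}|z|^{(n+2)/2}$ as $|z|\to\infty$, so $|\zeta|^2$ is comparable to $|z|^{n+2}$, and therefore $T(\zeta)=O(\zeta^{-2})$ as $\zeta\to\infty$. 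I expect the only slightly delicate points to be bookkeeping: confirming that the fixed branches of $Q^{1/2}$ and $Q^{1/4}$ in \eqref{SR}--\eqref{FR} are mutually compatible so that the usual product and power rules apply to $\phi=Q^{-1/4}$ and $Q^{1/2}$, and verifying the cancellation of the $\dot w_j$-term; neither is hard, and the remaining computations are routine.
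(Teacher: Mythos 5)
Your proposal is correct and follows essentially the same route as the paper: it carries out the Liouville computation using the single-valued functions $A=Q^{1/2}$, $B=Q^{1/4}$ on $G_j(R)$, recovers the formula for $T$, and transfers the bound $O(z^{-n-2})$ to $O(\zeta^{-2})$ via Lemma~\ref{L2}. The only (cosmetic) difference is organizational: you expand $g_j=\phi w_j$ with $\phi=Q^{-1/4}=B^{-1}$ and differentiate in $z$, isolating the cancellation of the $\dot w_j$-coefficient as the key identity of the transformation, whereas the paper writes $w_j=Bg_j$ and differentiates twice in $\zeta$, arriving at the same $T$ through the auxiliary identities $B'=Q'/(4B^3)$ and $\frac{d}{dz}(B/A)=-Q'/(4B^3A)$.
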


\begin{proof}
	From the first formula in \eqref{LT}, we have
	\[
	\zeta ^{\prime }(z)=\frac{dL_j(z)}{dz}=z^{{n}/{2}}\left( 1+\ell
	\left( z\right) \right) ^{{1}/{2}}=A(z),\quad z\in G_j(R).
	\]
Since $B(z)=z^{{n}/{4}}\left( 1+\ell \left( z\right) \right) ^{{1}/{4}}$ is analytic in $G_j(R)$ and  
$B(z)^{4}=z^{n}\left( 1+\ell \left(z\right) \right) =Q(z)$, we~get
		\begin{equation}\label{BB}
		B^{\prime }(z)=\frac{Q^{\prime }(z)}{4B(z)^{3}},\quad z\in G_j(R).  
		\end{equation}
Therefore, from the second formula in \eqref{LT}, we obtain
	\begin{eqnarray*}
	w^{\prime }_j(\zeta ) &=& \frac{dw_j(\zeta)}{d\zeta}
		=\frac{(d/dz)\left( B(z)g_j(z)\right) }{d\zeta/dz}
		=\frac{g_j(z)B^{\prime }(z)+B(z)g^{\prime }_j(z)}{\zeta ^{\prime }(z)} \\
		&=&\frac{Q^{\prime }(z)}{4B(z)^{3}A(z)}g_j(z)+\frac{B(z)}{A(z)}g^{\prime }_j(z),\\
	w^{\prime \prime }_j(\zeta ) 
		&=&\frac{dw'_j(\zeta)}{d\zeta}=\frac{1}{d\zeta/dz}\frac{d w'_j(\zeta)}{dz}  \\
		&=&\frac{1}{A(z)}\bigg\{ \frac{1}{4}g_j(z)\frac{d}{dz}\left(\frac{Q^{\prime
			}(z)}{B(z)^{3}A(z)}\right)\\ 
		&& +\left[ \frac{Q^{\prime }(z)}{4B(z)^{3}A(z)}+%
		\frac{d}{dz}\left( \frac{B(z)}{A(z)}\right) \right] g^{\prime }_j(z) +\frac{B(z)}{A(z)}g^{\prime \prime }_j(z)\bigg\}, 
	\end{eqnarray*}%
where
	\[
	\frac{d}{dz}\left( \frac{B(z)}{A(z)}\right) =\frac{B^{\prime }(z)}{A(z)}-%
	\frac{B(z)A^{\prime }(z)}{A(z)^{2}}.
	\]
From $A(z)^{2}=Q(z)$, we obtain $2A(z)A'(z)=Q'(z)$. It then follows from \eqref{BB}  that
	\begin{eqnarray*}
		\frac{d}{dz}\left( \frac{B(z)}{A(z)}\right)  &=&\frac{Q^{\prime }(z)}{%
			4B(z)^{3}A(z)}-\frac{B(z)Q^{\prime }(z)}{2A(z)^{3}} \\
		&=&\left( \frac{A(z)^{2}}{4B(z)^{3}A(z)^{3}}-\frac{2B(z)^{4}}{%
			4B(z)^{3}A(z)^{3}}\right) Q^{\prime }(z) \\
		&=&\left( \frac{Q(z)}{4B(z)^{3}A(z)^{3}}-\frac{2Q(z)}{4B(z)^{3}A(z)^{3}}
		\right) Q^{\prime }(z) \\
		&=&-\frac{Q(z)}{4B(z)^{3}A(z)^{3}}Q^{\prime }(z)=-\frac{Q^{\prime }(z)}{%
			4B(z)^{3}A(z)}.
	\end{eqnarray*}
Thus
	\begin{equation}\label{proof1}
	w^{\prime \prime }_j(\zeta )=\frac{1}{4}\frac{1}{A(z)}g_j(z)\frac{d}{dz}\left( 
	\frac{Q^{\prime }(z)}{B(z)^{3}A(z)}\right) +\frac{B(z)}{Q(z)}g^{\prime
		\prime }_j(z).
	\end{equation}
	Substituting $g^{\prime \prime }_j(z)=-Q(z)g_j(z)$ and using the second formula in \eqref{LT} yields
	\begin{eqnarray*}
		w^{\prime \prime }_j(\zeta ) &=&\frac{1}{4}\frac{1}{A(z)}g_j(z)\frac{d}{dz}\left(
		\frac{Q^{\prime }(z)}{B(z)^{3}A(z)}\right) -B(z)g_j(z) \label{proof2}\\
		&=&\left( \frac{1}{4B(z)A(z)}\frac{d}{dz}\left( \frac{Q^{\prime }(z)}{%
			B(z)^{3}A(z)}\right) -1\right) B(z)g_j(z) \nonumber \\
		&=&\left( T(\zeta) -1\right) w_j(\zeta ), \nonumber
	\end{eqnarray*}%
where 
	\begin{eqnarray}
	T(\zeta )&=&\frac{1}{4B(z)A(z)}\frac{d}{dz}\left( \frac{Q^{\prime }(z)}{B(z)^{3}A(z)}\right)\label{proof3}\\
	&=&\frac{1}{4B(z)A(z)}\left(\frac{Q^{\prime \prime }(z)}{B(z)^{3}A(z)}
	-\frac{Q^{\prime }(z)\left[ B(z)^{3}A(z)\right] ^{\prime }}{B(z)^{6}A(z)^{2}}\right) \nonumber \\
		&=&\frac{Q^{\prime \prime }(z)}{4Q(z)^{2}}-\frac{Q^{\prime }(z)\left[
			B(z)^{3}A(z)\right] ^{\prime }}{4Q(z)^{2}A(z)B(z)^3}.\nonumber
	\end{eqnarray}%
Differentiating $\left[ B(z)^{3}A(z)\right] ^{4}=Q(z)^{5}$, we have
	\[
	\left[ B(z)^{3}A(z)\right] ^{\prime }=\frac{5}{4}\frac{Q^{\prime }(z)Q(z)^{4}%
	}{\left( B(z)^{3}A(z)\right) ^{3}}=\frac{5}{4}\frac{Q^{\prime }(z)Q(z)}{%
		B(z)A(z)}, 
	\]
and so
	\begin{eqnarray*}
		T(\zeta ) &=&\frac{1}{4}\left( \frac{Q^{\prime \prime }(z)}{Q(z)^{2}}
		-\frac{5}{4}\frac{Q^{\prime }(z)^{2}}{Q(z)^{3}}\right).
	\end{eqnarray*}%
	Hence, from Lemma~\ref{L2} and \eqref{L3-new}, it follows that  $ T(\zeta) $  analytic in a domain containing the region $ \tilde{G}_j(\delta, \tilde{R}) $ and 
	\[
	T(\zeta )=O\left( \frac{1}{z^{n+2}}\right) =O\left( \frac{1}{\zeta^{2}
	}\right) ,\text{\quad }\zeta \rightarrow \infty .
	\]%
	This completes the proof.
\end{proof}

\subsection{ \sc A geometric property of $ L_j(z) $}\label{geometric-sec}

The following lemma about the pre-images of horizontal lines under $L_j(z)$ will be needed for proving 
Theorem~\ref{theo-zeros} in Section~\ref{proof-of-thm2}. 

\begin{lemma}\label{L1}
Let $\delta,R,\tilde{R}$ be as in Lemma~\ref{L3-new}, and let $v_0\in\R$.  For  $ j \in \{0,\ldots, n+1\}$, let $\ell_j: \zeta =(-1)^ju+iv_0$ 	denote a horizontal half-line in $ \tilde{G}_j(\delta, \tilde{R}) $, where 
		$$
		\left\{
		\begin{array}{lcl}
		u\ge 0,\ & \text{if} & |v_0|>\tilde{R},\\
		u>\sqrt{\tilde{R}^2-v_0^2},\ & \text{if} & |v_0|\le \tilde{R}.
		\end{array}
		\right.
		$$
	Then there exists a constant $ C=C(n,|v_0|,R)>0 $ such that the pre-image $\el_j$ of $\ell_j$ under $\z=L_j(z)$ is a curve lying in the domain	
		\begin{equation*}
		\Lambda^*_{j}=\{z=re^{i\theta} : |\theta-\psi_j|<Ce_n(r),\,r>R\}.
		\end{equation*}
	
\end{lemma}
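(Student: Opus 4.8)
The plan is to combine the univalence of $L_j$ from Lemma~\ref{L3-new} with the asymptotic representation \eqref{asL} and the argument bound \eqref{K(z)} of Lemma~\ref{L2}. By Lemma~\ref{L3-new}, $L_j$ is a homeomorphism of $G_j(R)$ onto its image and $\tilde{G}_j(\delta,\tilde{R})\subseteq L_j(G_j(R))$. The stated conditions on $u$ force $|\zeta|^2=u^2+v_0^2>\tilde{R}^2$ at every point of $\ell_j$, so $\ell_j\subseteq \tilde{G}_j(\delta,\tilde{R})$; hence $\el_j:=L_j^{-1}(\ell_j)$ is a well-defined simple curve in $G_j(R)$, and in particular every $z\in\el_j$ satisfies $|z|>R$, which is one of the two defining conditions of $\Lambda^*_j$.

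First I would fix $z=re^{i\theta}\in\el_j$ and set $\zeta=L_j(z)=(-1)^ju+iv_0\in\ell_j$. Writing $\zeta=(-1)^j\bigl(u+i(-1)^jv_0\bigr)$ and using $|\arctan t|\le|t|$, one gets $|\arg\zeta-\pi j|\le|v_0|/u$. Since $|v_0|$ is fixed and $|L_j(z)|\to\infty$ as $|z|\to\infty$, those $z\in\el_j$ with $u\le 2|v_0|$ satisfy $|\zeta|=\sqrt{u^2+v_0^2}\le\sqrt5\,|v_0|$ and hence lie in a bounded annulus $R<|z|\le R'$, where $R'=R'(n,|v_0|,R)$; on this annulus $e_n(r)$ is bounded below by a positive constant while $|\theta-\psi_j|<\tfrac{2\pi}{n+2}$ throughout $G_j(R)$, so the required inequality holds there once $C$ is taken large enough. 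For the remaining $z$ we have $u>2|v_0|$, so $u=\sqrt{|\zeta|^2-v_0^2}\ge\tfrac12|\zeta|$, and since $|\zeta|\sim\tfrac{2}{n+2}r^{(n+2)/2}$ by Lemma~\ref{L2} we obtain $|\arg\zeta-\pi j|=O\bigl(|v_0|\,r^{-(n+2)/2}\bigr)$. A short case check ($n=1$, $n=2$, $n\ge3$) gives $r^{-(n+2)/2}=O(e_n(r))$, hence $|\arg\zeta-\pi j|=O\bigl(|v_0|\,e_n(r)\bigr)$ in all cases.

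Next, \eqref{asL} gives $\arg\zeta=\tfrac{n+2}{2}\theta+\arg(1+K(z))$, and $\tfrac{n+2}{2}\psi_j=\pi j$, so
	$$
	\tfrac{n+2}{2}(\theta-\psi_j)=\bigl(\arg\zeta-\pi j\bigr)-\arg(1+K(z)).
	$$
Combining the bound just obtained with $|\arg(1+K(z))|=O(e_n(r))$ from \eqref{K(z)} yields
	$$
	|\theta-\psi_j|\le\tfrac{2}{n+2}\bigl(|\arg\zeta-\pi j|+|\arg(1+K(z))|\bigr)=O(e_n(r)),
	$$
with implied constant $C=C(n,|v_0|,R)$. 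Therefore $z\in\Lambda^*_j$, and since $z\in\el_j$ was arbitrary, $\el_j\subseteq\Lambda^*_j$.

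The only step needing genuine care is the branch identity $\arg\zeta=\tfrac{n+2}{2}\theta+\arg(1+K(z))$: this is exactly the form used in the proof of Lemma~\ref{L3-new}(2), and it is legitimate because $|K(z)|<1$ keeps $1+K(z)$ in the open right half-plane, so the argument of the product $\tfrac{2}{n+2}z^{(n+2)/2}(1+K(z))$ splits additively with the small error term controlled by \eqref{K(z)}. A secondary, purely bookkeeping point is checking that all of the $O$-constants above can be absorbed into a single constant $C$ depending only on $n$, $|v_0|$ and $R$.
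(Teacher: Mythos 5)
Your proof is correct and follows essentially the same route as the paper's: use the univalence from Lemma~\ref{L3-new} to get a well-defined pre-image, split $\arg\zeta-\pi j$ from $\arg(1+K(z))$ via the additive formula coming from \eqref{asL}, bound each piece by a multiple of $e_n(r)$ using \eqref{K(z)} and the growth estimate $|\zeta|\gtrsim r^{(n+2)/2}$, and combine. The only stylistic difference is that the paper bounds $|\arg\zeta-\pi j|$ by $\arcsin(|v_0|/\rho)\le\tfrac{\pi}{2}|v_0|/\rho$ with $\rho=|\zeta|$, which works uniformly and avoids your case split on $u\le 2|v_0|$ versus $u>2|v_0|$; both routes reach the same constant of the form $C(n,|v_0|,R)$.
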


\begin{proof}
	Let $\z=\rho e^{i\phi}\in \ell_j$, and let $z=re^{i\theta}$ be the unique pre-image of $\z$
	in $G_j(R)$, provided by Lemma~\ref{L3-new}. 
	From the geometry, we may now write more precisely that 
	$$
	|\phi-\pi j| = \arcsin\left( \frac{|v_0|}{\rho}\right).
	$$ 
We assume that $ R $ is large enough so that $ |K(z)|\le 1/2 $ for all $ |z|>R $. Then we obtain from Lemma~\ref{L2} that $|\z|\ge (n+2)^{-1} |z|^{(n+2)/2}$ or $\rho\ge (n+2)^{-1} r^{(n+2)/2}$ for all $ r>R $. This yields
	\begin{equation*}
	|\phi-\pi j|\leq \frac{\pi|v_0|}{2\rho}\leq \frac{(n+2)\pi |v_0|}{2}r^{-(n+2)/2}, 
	\quad r>R.
	\end{equation*}
%
By recalling \eqref{er}, we see that $ r^{-(n+2)/2} \le e_n(r) $ holds for all $ r> R $ and for every $ n\geq 1$. Hence,
	\begin{equation}\label{pr-01}
	|\phi-\pi j|\le\frac{(n+2)\pi |v_0|}{2} e_n(r), \quad  r> R.
	\end{equation}
We deduce from \eqref{K(z)} that there exists a constant $ C_0(R)>0 $ such that
		\begin{equation}\label{KK}
		|\arg(1+K(z))| \le C_0(R) \, e_n(r), \quad  r> R.
		\end{equation}
From \eqref{asL}, it follows that
	\begin{equation*}\label{phi}
	\phi=\frac{n+2}{2}\theta+\arg(1+K(z)).
	\end{equation*}
Then using $\psi_j =\frac{2\pi j}{n+2}$ yields
	\begin{eqnarray*}
		|\theta - \psi_j|\leq \frac{2}{n+2}\Big(|\phi-\pi j|+|\arg(1+K(z))|\Big).
	\end{eqnarray*}
Combining this with \eqref{pr-01} and \eqref{KK}, we get
	$$
	|\theta - \psi_j|\le C e_n(r),\quad r>R,
	$$
where $ C=\pi|v_{0}|+{2\pi}{(n+2)^{-1}} \, C_{0}(R) $.  This completes the proof of the lemma.
\end{proof}
%



\section{Asymptotic integration theory}\label{asymp-int}


The discussion on asymptotic integration related to 
equation \eqref{e2}  in Hille's book \cite[\S7.4]{hille} is conducted on Riemann surfaces. Here we confine the presentation and justification to the complex plane, and at the same time more details will be given. This will make this theory more accessible to a wider mathematical community.

To avoid ambiguity, we use $z$ as an independent variable instead of $\zeta$, since this 
section is of independent interest. However, in proving the main theorems, we will apply the results 
of this section to equation \eqref{e2} via Liouville's transformation \eqref{LT}, and at that point 
the variable $\z$ will be used instead of $z$.

\subsection{\sc Preparations}\label{further-sec}
 
We begin with Gronwall's lemma on unbounded intervals, which plays a key role in asymptotic integration. The following version is stated as a problem in \cite[p.~20]{hille}. We give a complete proof for the convenience of the reader.

\begin{lemma}\label{theo1}
	For $a\in\R$, let $K(t)>0$ be an integrable function on $(a,\infty)$, and let $g(t)\geq 0$ be a continuous and bounded function on $[a,\infty)$. If $f(t)\geq 0$ is a continuous and bounded function on $[a,\infty)$, and if
	\begin{equation}\label{1.1}
	f(t)\le g(t)+ \int_t^\infty K(s) f(s)\, ds,\quad t\geq a,
	\end{equation}
	then
	\begin{equation}\label{1.2}
	f(t)\le g(t)+\int_t^\infty K(s) \exp\left\{\int_t^s K(u) du\right\} g(s)\, ds.
	\end{equation}
\end{lemma}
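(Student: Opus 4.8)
The plan is to reduce the integral inequality \eqref{1.1} to a first-order linear differential inequality for the tail
$$
F(t):=\int_t^\infty K(s)f(s)\,ds ,
$$
and then resolve it by an integrating factor. First I would record the elementary properties of $F$: since $f$ is bounded and $K$ is integrable on $(a,\infty)$, the integral converges, $F\ge 0$, $F(t)\to 0$ as $t\to\infty$, and $F$ is absolutely continuous on $[a,\infty)$ with $F'(t)=-K(t)f(t)$ for a.e.\ $t$ (in the applications $K$ will in fact be continuous, so that $F\in C^1$, but this is not needed). Substituting \eqref{1.1}, i.e.\ $f(t)\le g(t)+F(t)$, into this derivative and using $K(t)>0$ gives, for a.e.\ $t\ge a$,
$$
-F'(t)=K(t)f(t)\le K(t)g(t)+K(t)F(t),\qquad\text{hence}\qquad F'(t)+K(t)F(t)\ge -K(t)g(t).
$$

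Next I would introduce $\Phi(t):=\int_t^\infty K(u)\,du$, which by integrability is finite, lies between $0$ and $\Phi(a)$, tends to $0$ as $t\to\infty$, and satisfies $\Phi'(t)=-K(t)$. The integrating factor is $e^{-\Phi(t)}$: it is positive, bounded above by $1$, absolutely continuous, and $\frac{d}{dt}e^{-\Phi(t)}=K(t)e^{-\Phi(t)}$. Multiplying the differential inequality above by $e^{-\Phi(t)}$ recasts it as
$$
\frac{d}{dt}\!\left(e^{-\Phi(t)}F(t)\right)\ge -\,e^{-\Phi(t)}K(t)g(t),\qquad\text{a.e.\ }t\ge a.
$$
Since $e^{-\Phi}F$ is absolutely continuous, I integrate this over $[t,T]$ and let $T\to\infty$. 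The boundary term $e^{-\Phi(T)}F(T)\to 1\cdot 0=0$ because $\Phi(T)\to 0$ and $F(T)\to 0$, and $\int_t^\infty e^{-\Phi(s)}K(s)g(s)\,ds$ converges because $g$ is bounded, $K$ is integrable and $e^{-\Phi}\le 1$. This yields
$$
-\,e^{-\Phi(t)}F(t)\ge -\int_t^\infty e^{-\Phi(s)}K(s)g(s)\,ds,\qquad\text{i.e.}\qquad F(t)\le \int_t^\infty e^{\Phi(t)-\Phi(s)}\,K(s)g(s)\,ds.
$$

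Finally I would use the identity $\Phi(t)-\Phi(s)=\int_t^s K(u)\,du$ for $s\ge t$, so the bound on $F(t)$ is exactly $\int_t^\infty K(s)\exp\{\int_t^s K(u)\,du\}g(s)\,ds$; combining this with $f(t)\le g(t)+F(t)$ gives \eqref{1.2}. The step requiring the most care is the handling of the improper integral at $+\infty$ — namely verifying $F(\infty)=0$ and justifying that the integrated form of the a.e.\ differential inequality is legitimate — which is precisely where boundedness of $f$ and $g$ and integrability of $K$ are used; the rest is routine. As an alternative I would mention iterating \eqref{1.1} into itself: the $k$-th iterate contributes $\int_t^\infty K(s)\frac{(\Phi(t)-\Phi(s))^k}{k!}g(s)\,ds$, these sum over $k$ to the same exponential kernel, and the remainder after $n$ steps is bounded by $\|f\|_\infty\,\Phi(t)^n/n!\to 0$; but the integrating-factor argument is shorter and cleaner.
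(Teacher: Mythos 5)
Your argument is correct and is essentially the paper's own proof: both isolate the tail integral (you take $F(t)=\int_t^\infty K f$, the paper takes its negative), differentiate to get a first-order linear differential inequality, multiply by the integrating factor $\exp\{-\int_t^\infty K\}$, integrate to $+\infty$ using $F(\infty)=0$, and finish via $f\le g+F$. The only differences are cosmetic (the sign convention on $F$) or additional (your remarks on absolute continuity and the alternative iteration argument).
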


\begin{proof}
	Set
	\begin{equation*}
	F(s)=-\int_s^\infty K(u) f(u) du,\quad s\in (a,\infty).
	\end{equation*}
	By differentiating and then using \eqref{1.1}, we obtain
	\begin{equation*}
	F'(s)=K(s)f(s)\le K(s)g(s)-K(s)F(s),
	\end{equation*}
	that is, $ 
		F'(s)+K(s)F(s) \le K(s) g(s). 
		$	
	Multiplying both sides by
		\[
		\exp\left\{-\int_s^\infty K(u) du\right\}>0,
		\]
	we get
		\begin{equation*}
		\frac{d}{ds} \left(F(s)\exp\left\{-\int_s^\infty K(u)\, du\right\} \right) \le K(s)\exp\left\{-\int_s^			\infty K(u)\, du\right\} g(s).
		\end{equation*}
	Next, since $\displaystyle\lim_{s\to\infty}F(s)=0$, an integration from $t\geq a$ to $\infty$, and	using the fundamental theorem of calculus, results in
	\begin{equation*}
	-F(t)\exp\left\{-\int_t^\infty K(u)\, du\right\}
	\le \int_t^\infty K(s)\exp\left\{-\int_s^\infty K(u)\, du\right\} g(s)\, ds,
	\end{equation*}
	or, in other words,
	\begin{equation}\label{1.8}
	-F(t)\le \int_t^\infty K(s)\exp\left\{\int_t^s K(u)\, du\right\} g(s)\, ds.
	\end{equation}
	From \eqref{1.1}, we have $f(t)\le g(t) - F(t)$, thus \eqref{1.2} follows from \eqref{1.8}.
\end{proof}

We now discuss frameworks that are needed in the theory of asymptotic integration. 
Corresponding to \eqref{e2}, we consider a more general perturbed sine  equation of the form
	\begin{equation}\label{1}
	w''+(1-F(z))w=0,
	\end{equation}
where $F(z)$ satisfies either Hypothesis $ \mathbf{F}^+ $ or Hypothesis $ \mathbf{F}^- $ below.
\medskip
\begin{quote}
	\textbf{Hypothesis $\mathbf{F}^+$.}
	\emph{ The function $F(z)$ is analytic in a domain 
		$$
		D^{+}(\delta_0,R_0)=\{z \in \mathbb{C}:|z|>R_0, |\arg(z)|< \pi- \delta_0\},
		$$ 
	where $ \delta_0 \in (0, \pi) $ and $ R_0>0 $.
	For each $ z\in D^+(\delta_0, R)$, where $R\geq R_0/\sin (\delta_0)$, the integral
	$		\int_z^\infty |F(t)| |dt| $ 
		exists along the path of  integration given by $ t=z+r $, $ 0\le r<\infty $. 
		Moreover, there exists a $ \delta $ satisfying $\delta>\delta_0$ such that 
		\begin{equation}\label{trimming-edges}
		\lim_{R\to\infty} \sup_{z\in D^+(\delta,R)} \int_z^\infty |F(t)| |dt|=0.
		\end{equation}}

		

\end{quote}

\begin{figure}[h!]
\centering 

\definecolor{ffqqqq}{rgb}{1,0,0}
\definecolor{qqqqff}{rgb}{0,0,1}
\begin{tikzpicture}[line cap=round,line join=round,>=triangle 45,x=1.0cm,y=1.0cm]
\clip(-5.58,-4.34) rectangle (5.64,4.28);

\begin{scope}
\begin{pgfinterruptboundingbox} 
\path[invclip](0,0) -- (-8,3) -- (-8,-3) -- cycle;
\end{pgfinterruptboundingbox}
\draw [line width=0.4pt,fill=black,fill opacity=0.1, draw opacity =0] (0,0) circle (1.18cm);
\draw[dashed](0,0) circle (3.6cm);
\end{scope}

\draw [shift={(0,0)}] (159.41:1.3) arc (159.41:180:1.3);

\draw (1.05,0) node[anchor=north west] {$R_0$};
\draw (3.6,0) node[anchor=north west] {$R$};
\draw (-1.9,0.6) node[anchor=north west] {$\delta_0$};
\draw (-4.3,1) node[anchor=north west] {$\frac{R_0}{\sin (\delta_0)}$};

\draw[fill=black,fill opacity=0.1,draw opacity =0 ](0,0) -- (-8,3) -- (-8,-3) -- cycle;

\draw [
color=qqqqff,domain=-3.5:5] plot(\x,2);



\draw [line width=1.2pt,color=qqqqff] (0,0)-- (-3.14,1.18);
\draw [-latex,line width=0.3pt] (-3.27,0.63) -- (-2.03,0.76);
\draw [->,line width=0.4pt] (-5.1,0) -- (5.04,0);
\draw [->,line width=0.4pt] (0,-4) -- (0,4);

\begin{scriptsize}
\fill [color=qqqqff] (-3.14,1.18) circle (1.5pt);
\fill [color=qqqqff] (-3.5,2) circle (1.5pt);
\fill [color=black] (0,1.18)  circle (1.5pt);
\end{scriptsize}
\draw (-3.9,2.5) node[anchor=north west] {$z$};

\draw [dash pattern=on 2pt off 2pt, color =blue] (-3.14,1.18) -- (0,1.18);

\end{tikzpicture}
\caption{Geometric justification for the inequality $ R\ge R_0 / \sin(\delta_0) $.}\label{path}
\end{figure}
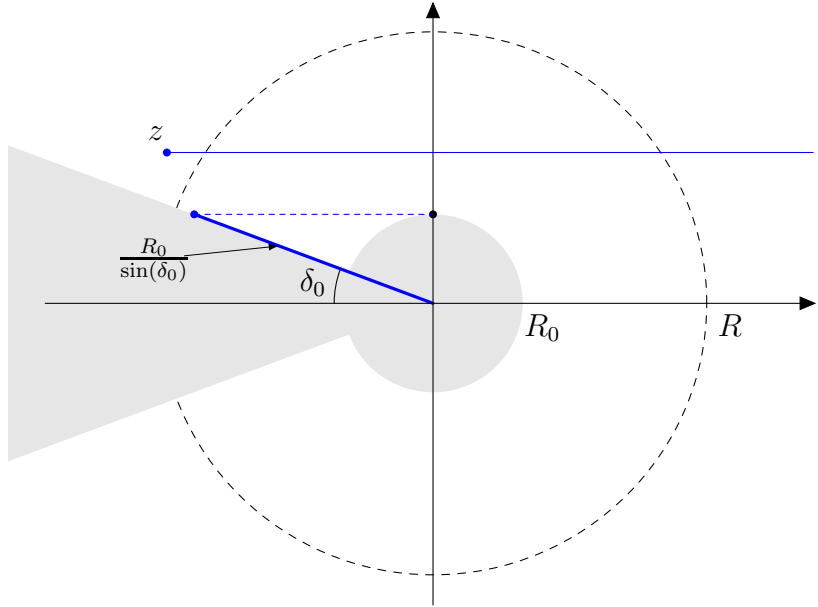

\medskip
Calling the above conditions Hypothesis~$\mathrm{F}^+$ is inspired by the notation and conditions in \cite[Ch.~7.4]{hille}.
The assumption \eqref{trimming-edges} contains the idea of trimming the domain along the boundary, as discussed in
\cite[p.~335]{hille}. 
%
%
In particular, if $ F(z) $ is analytic and if $F(z)=O(z^{-2})$ as $z\to\infty$ in $ D^+(\delta,R) $, then Hypothesis $\mathrm{F}^+$ is satisfied. This is the situation with the coefficient function in \eqref{e2} by Lemmas~\ref{L3-new} and \ref{Q-P}, of which the former guarantees that the
variable $\zeta$ in \eqref{e2} belongs to a domain that contains $ D^+(\delta,R) $. For this example, which is the foundation of this paper, $ \delta$ and  $\delta_0 $ can both be arbitrarily close to zero. 
If we choose $ \delta \in (\pi/2, \pi)$, then the transcendental function $F(z)= e^{-z} $ satisfies Hypothesis~$\mathrm{F}^+$.



We define Hypothesis $\mathrm{F}^-$ analogously to Hypothesis $\mathrm{F}^+$.

\begin{quote}
	\textbf{Hypothesis $\mathbf{F}^-$.}
	\emph{The function $F(z)$ is analytic in a domain 
	$$
	D^{-}(\delta_0,R_0)=\{z \in \mathbb{C}:|z|>R_0, |\arg(z)-\pi|< \pi- \delta_0\},
	$$ 
	where $ \delta_0\in (0, \pi) $ and $ R_0>0 $. For each $ z\in D^-(\delta_0, R)$, where $R\geq R_0/\sin (\delta_0)$, the integral
	$		\int_z^\infty |F(t)| |dt| $ 
	exists along the path of  integration given by $ t=z-r $, $ 0\le r<\infty $. 
	Moreover, there exists a $ \delta $ satisfying $\delta>\delta_0$ such that 
	\begin{equation*}
	\lim_{R\to\infty} \sup_{z\in D^-(\delta,R)} \int_z^\infty |F(t)| |dt|=0.
	\end{equation*}}
\end{quote}

\medskip

Regarding Hypotheses  $\mathrm{F}^+$ and  $\mathrm{F}^-$, the constants $ \alpha=\alpha(\delta,R) $ and $ \beta=\beta(\delta,R) $ defined~by
	\begin{equation}\label{M}
	\alpha = \sup_{z\in D^+(\delta,R)} \int_z^\infty |F(t)| |dt| \quad \text{and} \quad \beta = \sup_{z\in D^-(\delta,R)} \int_z^\infty |F(t)| |dt|
	\end{equation}
will be smaller than any preassigned positive number, provided that $ R>0 $ is large enough.


\subsection{\sc General result on asymptotic solutions}\label{general result}
In the theory of asymptotic integration, it turns out to be crucial that the solutions of 
\eqref{1} are also solutions of the singular Volterra integral equation \eqref{vol-equation} below. 
As the first result in the theory of asymptotic integration,
Hille proves \cite[Theorem~7.4.1]{hille}. Later on in the theory, Hille uses a ''reverse version'' of \cite[Theorem~7.4.1]{hille} without actually stating or proving it.  The following result is this reverse version that we need, which has a direct role in the theory of asymptotic integration.

\begin{theorem}\label{volterra}
	Suppose that $ F(z) $ satisfies Hypothesis $\mathrm{F}^+ $, and let $ w_{\sin}(z) $ be a non-trivial solution of the sine equation
	\begin{equation}\label{3}
	w'' + w =0.
	\end{equation}
	Then the singular Volterra integral equation
	\begin{equation}\label{vol-equation}
	w(z) = w_{\sin}(z) + \int_z^\infty \sin(t-z) F(t) w(t)dt,
	\end{equation}
	where $ z\in D^+(\delta,R) $ and the path of integration is $ t-z = r $, $ 0\le r< \infty $, has a unique solution $ w(z) $ which is a solution of  equation \eqref{1}. Moreover, with $ z=x+iy $ we have
	\begin{equation}\label{est-w}
	\left\vert w(z) -w_{\sin}(z) \right\vert \leq
	M(y) \left\{ \exp \left[ \int_{x}^{\infty }\left\vert F(s+iy) \right\vert ds\right] -1\right\} , \quad z\in D^+(\delta,R),
	\end{equation}
	where $ M(y) = \sup\limits_{s\ge x} |w_{\sin}(s+iy)| $.
\end{theorem}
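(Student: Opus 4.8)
The plan is to treat \eqref{vol-equation} as a fixed-point equation and solve it by Picard iteration, then verify that the limit solves the ODE \eqref{1}. First I would fix $z_0=x_0+iy_0\in D^+(\delta,R)$ and work along the horizontal ray $t=s+iy_0$, $s\ge x_0$. On the space of continuous functions $w$ on this ray for which $\sup_{s\ge x_0}|w(s+iy_0)|<\infty$, define the operator
	$$
	(\mathcal{T}w)(z)=w_{\sin}(z)+\int_z^\infty \sin(t-z)F(t)w(t)\,dt .
	$$
Since $|\sin(t-z)|\le 1$ when $t-z=r\ge 0$ is real (as $\sin$ of a real argument is bounded by $1$), the tail integral is dominated by $\int_x^\infty |F(s+iy)|\,|w(s+iy)|\,ds$, which is finite by Hypothesis $\mathrm{F}^+$ since the integral of $|F|$ along the ray converges. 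Iterating from $w_0=w_{\sin}$ and setting $\Delta_k=w_k-w_{k-1}$, an induction along the ray gives the pointwise bound
	$$
	|\Delta_k(s+iy)|\le M(y)\,\frac{1}{k!}\left(\int_s^\infty |F(u+iy)|\,du\right)^{\!k},
	$$
where $M(y)=\sup_{s\ge x}|w_{\sin}(s+iy)|$ is finite because $w_{\sin}$ is a bounded combination of $e^{\pm i z}$ on a horizontal line. Summing the series $\sum_k \Delta_k$ yields both convergence (uniformly on each ray, by comparison with $\exp$) to a solution $w$ of \eqref{vol-equation} and directly the estimate \eqref{est-w}, since $\sum_{k\ge 1}\frac{1}{k!}(\cdots)^k=\exp(\cdots)-1$.

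Next I would establish uniqueness: if $w,\tilde w$ are two bounded solutions on the ray, then $h=w-\tilde w$ satisfies $|h(z)|\le \int_x^\infty |F(s+iy)||h(s+iy)|\,ds$ with $h$ bounded, and since $\int|F|$ along the ray is small once $R$ is large (indeed $\le\alpha$ in the notation of \eqref{M}), Gronwall's Lemma~\ref{theo1} with $g\equiv 0$ forces $h\equiv 0$. This is also where I would note that the solution so constructed on each horizontal ray is independent of the choice of ray and patches to a function on $D^+(\delta,R)$; analyticity of $w$ follows since the uniform limit of the analytic Picard iterates is analytic, or alternatively from the ODE once that is established. Finally, to see that $w$ solves \eqref{1}, I would differentiate \eqref{vol-equation} twice in $z$, carefully handling the variable lower limit of the integral: using $\frac{d}{dz}\int_z^\infty K(t-z)\,dt=-K(0)-\int_z^\infty K'(t-z)\,dt$ with $\sin(0)=0$, one gets $w'(z)=w_{\sin}'(z)-\int_z^\infty \cos(t-z)F(t)w(t)\,dt$, and differentiating again produces $w''(z)=w_{\sin}''(z)+F(z)w(z)+\int_z^\infty \sin(t-z)F(t)w(t)\,dt$; adding $w(z)$ and using $w_{\sin}''+w_{\sin}=0$ together with \eqref{vol-equation} gives $w''+w=F(z)w$, i.e. \eqref{1}.

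The main obstacle I expect is bookkeeping around the singular (infinite) integration limit rather than any deep difficulty: one must justify differentiation under the integral sign and the interchange of sum and integral, which requires the dominated-convergence-type bounds coming from $\int_x^\infty|F(s+iy)|\,ds<\infty$ and the uniform bound $M(y)$; and one must make sure the various estimates are uniform for $z$ in the \emph{trimmed} domain $D^+(\delta,R)$ (not merely $D^+(\delta_0,R)$), which is exactly what \eqref{trimming-edges} and the constant $\alpha$ in \eqref{M} are designed to provide. A secondary point to handle cleanly is that the horizontal ray from $z$ stays inside $D^+(\delta,R)$ — this is the geometric content behind the condition $R\ge R_0/\sin\delta_0$ illustrated in Figure~\ref{path} — so that $F$ is defined and the hypothesis applies all along the path.
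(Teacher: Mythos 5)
Your proposal follows essentially the same route as the paper: Picard iteration (successive approximations) starting from $w_0=w_{\sin}$, the inductive bound $|w_k-w_{k-1}|\le M(y)\tfrac{1}{k!}\bigl(\int_z^\infty|F|\,|dt|\bigr)^k$, summation of the telescoping series, Gronwall with $g\equiv 0$ for uniqueness, and twofold differentiation to verify the ODE. There is one genuinely nicer twist and one sign slip worth flagging.

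The nicer twist: you obtain \eqref{est-w} directly by summing the series, since $|w-w_{\sin}|\le\sum_{k\ge1}|w_k-w_{k-1}|\le M(y)\bigl[\exp\bigl(\int_z^\infty|F|\,|dt|\bigr)-1\bigr]$. The paper, having already established exactly the same term-by-term bound, nevertheless re-derives \eqref{est-w} at the end by a second appeal to Gronwall's Lemma~\ref{theo1} with $g(x)=\int_x^\infty|F||w_{\sin}|\,ds$ followed by an integration by parts. Your shortcut is valid and cleaner; there is no extra information in the paper's detour.

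The sign slip: when you differentiate $w'(z)=w_{\sin}'(z)-\int_z^\infty\cos(t-z)F(t)w(t)\,dt$ a second time, the boundary term at $t=z$ contributes $+\cos(0)F(z)w(z)=F(z)w(z)$, and the $\partial_z\cos(t-z)=\sin(t-z)$ term enters with the sign already present, so the correct outcome is
\begin{equation*}
w''(z)=w_{\sin}''(z)+F(z)w(z)-\int_z^\infty\sin(t-z)F(t)w(t)\,dt,
\end{equation*}
with a minus sign on the integral (not a plus, as you wrote). Only with this sign does the substitution of \eqref{vol-equation} give $w''+w=F w$; with your sign the terms $w(z)$ and $w_{\sin}(z)$ do not cancel and the calculation fails. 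Also, your quoted formula $\tfrac{d}{dz}\int_z^\infty K(t-z)\,dt=-K(0)-\int_z^\infty K'(t-z)\,dt$ is not literally applicable here because the integrand $\sin(t-z)F(t)w(t)$ depends on $t$ not only through $t-z$; the safe route is the substitution $t=z+r$, $r\in[0,\infty)$, which also makes the path's dependence on $z$ transparent and is what the paper implicitly uses.
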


\begin{proof}
	As in \cite{hille0}, we use the classical method of successive approximations. Define a sequence of functions $ \{w_n(z)\}$ by
	\begin{equation}\label{w_n}
	\left\{
	\begin{split}
	w_0(z) &= w_{\sin}(z),\\
	w_n(z) &= w_{\sin}(z)+\int_{z}^{\infty} \sin (t-z) F(t) w_{n-1}(t)\, dt, 
	\quad z\in D^+(\delta,R), \ n\ge 1, 
	\end{split}
	\right.
	\end{equation}
	where the path of integration is $ t-z = r $, $ 0\le r< \infty $. 
	
	First, we show by induction that the functions $w_n(z)$ are analytic in $D^+(\delta,R)$. It suffices to prove that  $ w_n(z) $ is  bounded in the domain
	$$
	\Sigma_\sigma = \{z \in D^+(\delta,R) : |\im (z)|< \sigma\},
	$$
	for any fixed $ \sigma>0 $. If $ n=0 $, then the function $ w_0(z)=w_{\sin}(z) $ is a solution of the sine equation \eqref{3}, and hence  there exists a constant $ C_0=C_0(\delta)>0 $ such that $ |w_{\sin}(z)|\le C_0 $ for all $ z\in \Sigma_\sigma $. Suppose that there exists a constant $ C_{n-1}=C_{n-1}(\delta)>0 $ such that $ |w_{n-1}(z)|\le C_{n-1} $ for all $ z\in \Sigma_\sigma $. Then it follows from \eqref{w_n} that
	\begin{equation*}
	\begin{split}
	|w_n(z)| &\le |w_{\sin}(z)|+\int_{z}^{\infty} |\sin (t-z)| |F(t)| |w_{n-1}(t)| |d t|,\\
	& \le  C_0+ C_{n-1}\int_{z}^{\infty} |F(t)|  |d t|, \quad z\in\Sigma_\sigma,
	\end{split}
	\end{equation*}
	since $ |\sin (t-z)| \le 1$ along the path of integration  $ t-z=r $, $ 0\le r< \infty $. From \eqref{M}, we obtain $|w_n(z)| \le C_n$ for all $ z\in \Sigma_\sigma $, where $ C_n = C_0 + \alpha C_{n-1} $. By induction, each function $ w_n(z) $ is bounded in $\Sigma_\sigma$, and therefore is analytic in $D^+(\delta,R)$. 
	
	Second, we prove that for all $ n \ge 1 $,
	\begin{equation}\label{estimate w_n}
	|w_{n}(z) - w_{n-1}(z)| \le M(y) \frac{1}{n!} \left[\int_z^\infty |F(t)| |dt|\right]^n, \quad z\in D^+(\delta,R),
	\end{equation}
	where $ M(y) = \max_{s\ge \re (z)} |w_{\sin}(s+iy)| $ and $ y=\im (z) $. From \eqref{w_n}, we obtain
	$$
	|w_1(z)-w_0(z)|\le \int_z^\infty |\sin (t-z)||F(t)| |w_0(t)| |dt| \le M(y) \int_z^\infty |F(t)| |dt|, 
	$$
	which is \eqref{estimate w_n} for $ n=1 $. Suppose that \eqref{estimate w_n} is true for some $ n\ge 1 $. Then from \eqref{w_n},
	\begin{equation*}
	\begin{split}
	|w_{n+1} (z)-w_n(z)| &\le \int_z^\infty |\sin (t-z)||F(t)| |w_n(t)-w_{n-1}(t)| |dt|\\
	& \le M(y) \frac{1}{n!}\int_z^\infty |F(t)| \left[\int_t^\infty |F(u)| |du|\right]^n |dt|.
	\end{split}
	\end{equation*}
	By noticing that $ |F(t)| =- \frac{d}{|dt|} \int_t^\infty |F(u)| |du|$, we obtain 
	$$
	|F(t)| \left[\int_t^\infty |F(u)| |du|\right]^n = \frac{-1}{n+1}\frac{d}{|dt|}\left[\int_t^\infty |F(u)| |du|\right]^{n+1}.
	$$
	Thus
	$$
	|w_{n+1} (z)-w_n(z)| \le M(y) \frac{1}{(n+1)!} \left[\int_z^\infty |F(u)| |du|\right]^{n+1}.
	$$
	By induction, \eqref{estimate w_n} is true for all $ n\ge 1 $.  
	
	Third, we define a function  
	\begin{equation}\label{sol-w}
	w(z) = w_0(z) + \sum_{k=1}^{\infty} (w_{k}(z) - w_{k-1}(z)),\quad z\in D^+(\delta,R),
	\end{equation}
	and consider its properties. Using \eqref{estimate w_n} and \eqref{M}, 
	\begin{equation*}
	\begin{split}
	|w(z)| &\le  |w_0(z)|+\sum_{k=1}^\infty |w_k(z) - w_{k-1}(z)| \\
	&\le M(y)+M(y) \sum_{k=0}^\infty \frac{\alpha^k}{k!} \le  M(y)+M(y)e^\alpha, \quad z\in \Sigma_\sigma.
	\end{split}
	\end{equation*}
	This shows that $w(z)$ is bounded in $\Sigma_\sigma$, and hence it is analytic in $D^+(\delta,R)$. Moreover,
	\begin{eqnarray*}
		|w(z)-w_n(z)| &\le& \sum_{k=n+1}^\infty |w_k(z) - w_{k-1}(z)|
		\leq M(y) \sum_{k=n+1}^\infty \frac{\alpha^k}{k!}, 
		\quad z\in \Sigma_\sigma.
	\end{eqnarray*}
	Thus, the sequence $ \{w_n(z)\}$ convergences to $w(z)$ uniformly in $\Sigma_\sigma$, and it follows from \eqref{w_n} that $ w(z) $ satisfies the equation \eqref{vol-equation}. 
	
	By this proof, we have constructed one solution of  \eqref{vol-equation} in $D^+(\delta,R)$. 
	To prove that $w(z)$, constructed in \eqref{sol-w}, is the only solution of \eqref{vol-equation}, we assume that $W(z)$ is another solution of \eqref{vol-equation} in $D^+(\delta,R)$. Then
	\begin{equation*}
	|w(z)-W(z)| \le \int_{z}^\infty |F(t)|\; |w(t)-W(t)|\, |dt|,\quad z\in D^+(\delta,R),
	\end{equation*}
	where we have used the fact that $t=z+r$, \ $0\leq r<\infty $, and	 hence $|\sin(t-z)|\leq 1$.
	Using Lemma~\ref{theo1} with $ g = 0 $, we obtain $|w(z)-W(z)|\equiv 0$ in $D^+(\delta,R)$. This proves the uniqueness of $ w(z) $.
	
	Now, to show that $ w(z) $ is a solution of \eqref{1}, twofold differentiation of \eqref{vol-equation} gives
	\begin{eqnarray}
	w'(z) &=& w_{\sin}'(z) -\int_{z}^{\infty }\cos (t-z) F(t)w(t)\, dt, \nonumber\\
	w''(z) &=&  w_{\sin}''(z) +F(z)w(z)-\int_{z}^{\infty }\sin (t-z)F(t)w(t)\, dt. \label{V3}
	\end{eqnarray}		
	From \eqref{vol-equation} and \eqref{V3}, we obtain
	$$
	w''(z) + (1- F(z)) w(z) = w''_{\sin}(z) + w_{\sin}(z)=0. 
	$$ 
	Thus $ w(z) $ is a solution of equation \eqref{1}.
	
	It remains to prove \eqref{est-w}. Denoting $z=x+iy$, it follows from \eqref{vol-equation} that
	\begin{equation*}\label{start}
	\begin{split}
	\left| w(x+iy)- w_{\sin}(x+iy)\right|
	\le& \int_x^\infty |F(s+iy)||w(s+iy)|\, ds\\
	\le & \int_x^\infty |F(s+iy)| |w_{\sin}(s+iy)|\, ds\\
	& +\int_x^\infty |F(s+iy)||w(s+iy)-w_{\sin}(s+iy)|\, ds.
	\end{split}
	\end{equation*}
	To simplify the notation, set $w(s+iy)=w(s)$, $w_{\sin}(s+iy)=w_{\sin}(s)$ and $F(s+iy)=F(s)$. 
	Using Lemma \ref{theo1} with
	$$
	g(x)=\int_x^\infty |F(s)| |w_{\sin}(s)|\, ds,
	$$ 
	we obtain
	\begin{align*}
	\left| w(z)- w_{\sin}(z)\right|  \le & \ \int_x^\infty |F(s)| |w_{\sin}(s)|\, ds  \nonumber\\ 
	& +\int_x^\infty   |F(s)| \exp\left\{\int_x^s |F(u)|\, du \right\}\left( \int_s^\infty |F(u)| |w_{\sin}(u)|\, du \right)\, ds, \nonumber\\
	\le &\ M(y) \left[\int_x^\infty |F(s)|\, ds \right.  \nonumber\\
	&+ \left. \int_x^\infty |F(s)| \exp\left\{\int_x^s |F(u)|\, du \right\}\left( \int_s^\infty |F(u)|\, du \right)\, ds\right], \label{w-wsin}
	\end{align*}
	where $M(y)=\sup\limits_{s\in [x,\infty)} |w_{\sin}(s+iy)|$.	
	Set 	
	$$
	h(x) = \int_{x}^{\infty} |F(s)| ds.
	$$
	Then, by partial integration, we obtain
	\begin{align*}
	h(x)- e^{h(x)} \int_x^\infty h(s) h'(s) e^{-h(s)} ds &= h(x)-e^{h(x)} \Big(h(x)e^{-h(x)} -1 +e^{-h(x)}\Big) = e^{h(x)}-1,
	\end{align*}
	from which it follows that
	\begin{equation*}
	\left| w(z)- w_{\sin}(z)\right|\le M(y)\left[\exp\left\{\int_x^\infty |F(u)|\, du \right\}-1\right],
	\end{equation*}
	which is \eqref{est-w}.
\end{proof}

\medskip

A similar argument as in the proof of Theorem~\ref{volterra} will yield the following result.

\begin{theorem}\label{volterra2}
	Suppose that $ F(z) $ satisfies Hypothesis $ \mathrm{F}^- $, and let $ w_{\sin}(z) $ be a non-trivial solution of the sine equation
	\eqref{3}. Then the singular Volterra integral equation
	\eqref{vol-equation}, where $ z\in D^-(\delta,R) $ and the path of integration is $ t-z = -r $, $ 0\le r< \infty $, has a unique solution $ w(z) $ which is a solution of  equation \eqref{1}. Moreover, with $ z=x+iy $ we have
	\begin{equation}\label{est-w2}
	\left\vert w(z) -w_{\sin}(z) \right\vert \leq
	M(y) \left\{ \exp \left[ \int_{x}^{-\infty }\left\vert F(s+iy) \right\vert ds\right] -1\right\} , \quad z\in D^-(\delta,R),
	\end{equation}
	where $ M(y) = \sup\limits_{s\le x} |w_{\sin}(s+iy)| $.
\end{theorem}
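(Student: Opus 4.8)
The plan is to mirror the proof of Theorem~\ref{volterra} with the obvious sign changes dictated by Hypothesis $\mathrm{F}^-$, so the bulk of the work is verifying that each step survives when the path of integration runs in the direction of decreasing real part, namely $t=z-r$, $0\le r<\infty$. First I would set up the same successive approximation scheme as in \eqref{w_n}, with $w_0(z)=w_{\sin}(z)$ and $w_n(z)=w_{\sin}(z)+\int_z^\infty \sin(t-z)F(t)w_{n-1}(t)\,dt$, but now with the path $t-z=-r$. The key elementary observation that makes everything go through unchanged is that along this path $t-z=-r$ is still real, so $|\sin(t-z)|\le 1$ and $|\cos(t-z)|\le 1$ exactly as before; also $\int_z^\infty |F(t)|\,|dt|$ along $t=z-r$ is finite by Hypothesis $\mathrm{F}^-$, and its supremum over $D^-(\delta,R)$ is the quantity $\beta=\beta(\delta,R)$ in \eqref{M}, which is small for large $R$.

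Next I would repeat, with no essential change, the three-part argument: (i)~prove by induction that each $w_n$ is bounded on the strips $\Sigma_\sigma=\{z\in D^-(\delta,R):|\im(z)|<\sigma\}$, hence analytic on $D^-(\delta,R)$, using $|w_n(z)|\le C_n$ with $C_n=C_0+\beta C_{n-1}$; (ii)~establish the telescoping estimate
\begin{equation*}
|w_n(z)-w_{n-1}(z)|\le M(y)\frac{1}{n!}\left[\int_z^\infty |F(t)|\,|dt|\right]^n,\quad z\in D^-(\delta,R),
\end{equation*}
where now $M(y)=\sup_{s\le \re(z)}|w_{\sin}(s+iy)|$ because the path moves leftwards, via the identity $|F(t)|=-\frac{d}{|dt|}\int_t^\infty |F(u)|\,|du|$ along $t=z-r$; and (iii)~sum the telescoping series \eqref{sol-w} to produce $w(z)$, show it is bounded on each $\Sigma_\sigma$ (hence analytic), show the $w_n$ converge to it uniformly there, and conclude from the recursion that $w(z)$ solves \eqref{vol-equation}. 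Uniqueness again follows from Lemma~\ref{theo1} applied with $g=0$ to $|w(z)-W(z)|\le\int_z^\infty |F(t)|\,|w(t)-W(t)|\,|dt|$, where the hypothesis of Lemma~\ref{theo1} — that $K(t)$ be integrable on a half-line and the comparison functions be bounded — is met along the path $t=z-r$ after reparametrising by $r\in[0,\infty)$. Then twofold differentiation of \eqref{vol-equation}, exactly as in \eqref{V3}, shows $w''+(1-F(z))w=w_{\sin}''+w_{\sin}=0$, so $w$ solves \eqref{1}.

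Finally, for the bound \eqref{est-w2}, I would write $z=x+iy$, restrict to the horizontal line $t=s+iy$ with $s$ running from $x$ down to $-\infty$, and abbreviate $w(s)=w(s+iy)$, $F(s)=F(s+iy)$, $w_{\sin}(s)=w_{\sin}(s+iy)$. Starting from \eqref{vol-equation} one gets $|w(z)-w_{\sin}(z)|\le g(x)+\int_x^{-\infty}|F(s)||w(s)-w_{\sin}(s)|\,ds$ with $g(x)=\int_x^{-\infty}|F(s)||w_{\sin}(s)|\,ds$; here the natural thing is to substitute $s=x-r$, turning the leftward integral into an ordinary integral over $r\in[0,\infty)$, so that Lemma~\ref{theo1} applies in the form already used in Theorem~\ref{volterra}. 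Bounding $|w_{\sin}(s)|\le M(y)$ with $M(y)=\sup_{s\le x}|w_{\sin}(s+iy)|$, then setting $h(x)=\int_x^{-\infty}|F(s)|\,ds$ and performing the same partial integration $h(x)-e^{h(x)}\int h h' e^{-h}=e^{h(x)}-1$, yields \eqref{est-w2}.

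The only genuine obstacle is bookkeeping: one must be consistent about the orientation of the path, the direction of the $|dt|$ arc-length differential, and the sign in identities such as $|F(t)|=-\frac{d}{|dt|}\int_t^\infty|F(u)|\,|du|$, since on the path $t=z-r$ a motion of $|dt|$ corresponds to increasing $r$ and hence \emph{decreasing} $\re(t)$. The cleanest way to avoid error is to reparametrise every integral by $r\in[0,\infty)$ at the outset, after which all the estimates become formally identical to those in the proof of Theorem~\ref{volterra} and Lemma~\ref{theo1} can be quoted verbatim. No new analytic idea is required; this is why the statement can be dispatched with the phrase ``a similar argument.''
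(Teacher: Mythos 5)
Your proof is correct and is precisely the ``similar argument'' the paper alludes to: the paper proves Theorem~\ref{volterra} in full and leaves Theorem~\ref{volterra2} to the reader with that phrase. You correctly isolate the one point that must be checked---that $t-z=-r$ remains real along the leftward path, so $|\sin(t-z)|\le 1$ still holds, $M(y)$ becomes a supremum over $s\le x$, and reparametrising by arc length $r\in[0,\infty)$ lets Lemma~\ref{theo1} and the partial-integration identity be quoted verbatim.
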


\subsection{\sc Three types of asymptotic solutions}\label{asymp_sols}

The sine equation \eqref{3} has three types of nontrivial  solutions: 
	\begin{itemize}
		\item[(1)] Solutions of type $e^{iz}$, which decay
		to zero exponentially in the upper half-plane. 
		\item[(2)] Solutions of type $e^{-iz}$, which decay to zero exponentially in the lower half plane. 
		\item[(3)] Oscillatory solutions (i.e., solutions with infinitely many zeros) of type $\sin(z-z_0)$, which are nontrivial linear combinations of $e^{iz}$ and $e^{-iz}$.  
	\end{itemize}
Once a particular solution $ w_{\sin}(z) $ of \eqref{3} is chosen, from Lemma~\ref{volterra} it is natural to expect that
the corresponding solution $ w(z) $ of the perturbed sine equation \eqref{1} will inherit the asymptotic properties of $w_{\sin}(z)$. This subsection aims to show that this is indeed the case.

{
\begin{corollary}\label{t22} 
	Suppose that $F(z)$ satisfies Hypothesis $\mathrm{F}^+$.	Then the perturbed sine equation \eqref{1} has unique linearly independent nonoscillatory solutions $E^{+}(z)$ and $E^{-}(z)$ asymptotic to $ e^{iz} $ and $ e^{-iz} $, respectively, in $D^+(\delta,R)$ in the sense that
	\begin{equation}\label{asymp-sol}
	E^+(z) = e^{iz} \left(1+ v_1(z)\right) \quad \text{and} \quad E^-(z) = e^{-iz} \left(1+ v_2(z)\right),
	\end{equation}
where 
	\begin{equation}\label{R_s}
	|v_{s}(z)| \le \exp \left[
	\int_{z}^{\infty }\left\vert F(t)\right\vert \left\vert
	dt\right\vert \right] -1, \quad s=1,2,
	\end{equation}
and the path of integration is $ t-z = r $, $ 0\le r< \infty $, for 
each $z\in D^+(\delta,R)$. 
\end{corollary}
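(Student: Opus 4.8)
The plan is to obtain $E^+(z)$ and $E^-(z)$ as two applications of Theorem~\ref{volterra}. Since $e^{iz}$ and $e^{-iz}$ are non-trivial solutions of the sine equation \eqref{3} and $F(z)$ satisfies Hypothesis $\mathrm{F}^+$, Theorem~\ref{volterra}, applied with $w_{\sin}(z)=e^{iz}$ and then with $w_{\sin}(z)=e^{-iz}$, yields in each case a unique solution of the Volterra equation \eqref{vol-equation} that is simultaneously a solution of \eqref{1} and is analytic in $D^+(\delta,R)$; I would call these $E^+(z)$ and $E^-(z)$. This already supplies the existence part, the fact that $E^\pm$ solve \eqref{1}, and the uniqueness assertion of the corollary, the latter being understood exactly as in Theorem~\ref{volterra}: $E^\pm$ is the only solution of the Volterra equation \eqref{vol-equation} attached to $w_{\sin}=e^{\pm iz}$.

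Next I would read off \eqref{asymp-sol}--\eqref{R_s} from the estimate \eqref{est-w}. Writing $z=x+iy$, the path $t-z=r$, $0\le r<\infty$, is the horizontal half-line $\{\,\tau+iy:\tau\ge x\,\}$, and along it $|e^{i(\tau+iy)}|=e^{-y}$ and $|e^{-i(\tau+iy)}|=e^{y}$ are constant in $\tau$. Hence in the first application $M(y)=\sup_{\tau\ge x}|e^{i(\tau+iy)}|=e^{-y}=|e^{iz}|$, and in the second $M(y)=|e^{-iz}|$. Substituting into \eqref{est-w} and dividing through by $|e^{iz}|$, respectively by $|e^{-iz}|$, and setting $v_1(z)=e^{-iz}E^+(z)-1$ and $v_2(z)=e^{iz}E^-(z)-1$, I obtain the representation \eqref{asymp-sol} together with
$$|v_s(z)|\le \exp\left[\int_x^\infty |F(\tau+iy)|\,d\tau\right]-1,\qquad s=1,2.$$
Since the last integral is precisely $\int_z^\infty |F(t)|\,|dt|$ taken along the path $t-z=r$, $0\le r<\infty$, this is exactly \eqref{R_s}.

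Finally I would verify that $E^+$ and $E^-$ are nonoscillatory and linearly independent. By \eqref{trimming-edges} we may enlarge $R$ so that $\alpha=\alpha(\delta,R)<1$ in \eqref{M}; then \eqref{R_s} forces $|v_s(z)|\le e^{\alpha}-1<1$ throughout $D^+(\delta,R)$, so $1+v_s(z)\neq 0$ there, whence neither $E^+$ nor $E^-$ has a zero in $D^+(\delta,R)$ and both are nonoscillatory. For linear independence, note that $E^+(z)/E^-(z)=e^{2iz}(1+v_1(z))/(1+v_2(z))$; were this a constant $c$, then letting $\re z\to+\infty$ along a fixed horizontal line would make the right-hand side tend to $c$ (because $\int_z^\infty|F(t)|\,|dt|\to 0$ there, so $v_1(z),v_2(z)\to 0$) while $e^{2iz}$ does not converge, a contradiction. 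Alternatively one computes the Wronskian of $E^+$ and $E^-$ and checks, using the asymptotics, that it is a nonzero constant.

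Nothing in this argument is a genuine obstacle once Theorem~\ref{volterra} is in hand; the one point deserving care is the observation that the factor $M(y)$ in \eqref{est-w} collapses to exactly $|e^{\pm iz}|$, which is what turns the absolute error estimate of Theorem~\ref{volterra} into the clean \emph{relative} bound \eqref{R_s} on $v_s$. The secondary point to be explicit about is the precise meaning of ``unique'' in the statement, namely uniqueness as the solution of the singular Volterra equation \eqref{vol-equation}, which is what Theorem~\ref{volterra} actually delivers.
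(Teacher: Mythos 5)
Your proof follows essentially the same path as the paper's: two applications of Theorem~\ref{volterra} with $w_{\sin}=e^{\pm iz}$, the observation that $M(y)$ collapses to $|e^{\pm iz}|$ so the absolute bound \eqref{est-w} turns into the relative bound \eqref{R_s}, and then zero-freeness from $|v_s|<1$ plus linear independence from \eqref{asymp-sol}. Your Wronskian/ratio argument for linear independence is slightly more explicit than the paper's one-line ``clearly linearly independent,'' but it is the same idea.

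One small slip worth fixing: you assert that enlarging $R$ so that $\alpha<1$ gives $e^{\alpha}-1<1$. That implication is false ($e^{0.8}-1\approx 1.23$); you need $\alpha<\log 2$ for $e^{\alpha}-1<1$. This does not damage the argument, since \eqref{trimming-edges} guarantees $\alpha=\alpha(\delta,R)\to 0$ as $R\to\infty$, so you may simply choose $R$ large enough that $\alpha<\log 2$ (the paper even flags $\log 2$ as the relevant threshold elsewhere). The paper phrases this step slightly more carefully: zero-freeness is obtained in a shrunken domain $D^+(\delta,R')$ with $R'>R$, and nonoscillation in the original $D^+(\delta,R)$ then follows because the remaining bounded region can contribute at most finitely many isolated zeros by the standard uniqueness theorem for linear ODEs. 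Your version establishes zero-freeness in the enlarged-$R$ domain directly, which equally yields finitely many zeros in the original domain; both are fine, but you may want to make the distinction explicit so that the claimed conclusion (``nonoscillatory in $D^+(\delta,R)$'' for the originally fixed $R$) is visibly what has been proved.
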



\begin{proof}
Obviously, $ w_{\sin}(z) = e^{iz} $ is a solution of  equation \eqref{3}. Then, from Theorem~\ref{volterra}, there exists a unique solution $ E^+(z) $ of equation \eqref{1} satisfying \eqref{est-w}, i.e., for $ z\in D^+(\delta,R) $, we~have
		\begin{equation}\label{we(iz)}
		|e^{iz}|\left\vert E^{+}(z)e^{-iz}-1\right\vert = \left\vert E^+(z) -e^{iz} \right\vert \leq
		M(y) \left\{ \exp \left[ \int_{z}^{\infty }\left\vert F(t) \right\vert |dt|\right] -1\right\} , 
		\end{equation}
	where the path of integration is $ t-z = r $, $ 0\le r< \infty $. Here we have 
		\begin{equation}\label{sup eiz}
		M(y) = \sup_{s\ge x} |e^{i(s+iy)}|  = e^{-y} = |e^{iz}|, \quad z=x+iy.
		\end{equation}
	From \eqref{we(iz)} and \eqref{sup eiz}, we see that $ v_1(z) = E^+(z)e^{-iz}-1 $ satisfies \eqref{R_s}.   Moreover,
	from \eqref{R_s} and \eqref{trimming-edges}, we infer that $|v_1(z)|<1$ for 
each $z\in D^+(\delta,R')$, where $ R'>R $ is large enough. Therefore, $E^+(z)$ has no zeros in $ D^+(\delta,R') $, and consequently $ E^+(z) $ is nonoscillatory in $ D^+(\delta,R) $ by the standard uniqueness theorem. A similar argument yields the conclusions for $ E^-(z) $.  From \eqref{asymp-sol},  $ E^+(z) $ and $ E^-(z) $ are clearly linearly independent in $ D^+(\delta, R) $.
	\end{proof}



By using Theorem~\ref{volterra2} and a similar proof to that of Corollary~\ref{t22}, 
we obtain the following result.
	\begin{corollary} \label{vol-}
		Suppose that $F(z)$ satisfies {Hypothesis $\mathrm{F}^-$}.	Then the perturbed sine equation \eqref{1} has 
		unique solutions $E^{+}(z)$ and $E^{-}(z)$ asymptotic to the respective functions $e^{ iz}$ and $e^{- iz}$ in $D^-(\delta,R)$ 
		in the sense that \eqref{asymp-sol} and \eqref{R_s} 
		hold, where the path of integration is $t-z= -r$, $ 0\le r<\infty $, for 
		each $z\in D^-(\delta,R)$.
	\end{corollary}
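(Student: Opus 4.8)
The plan is to imitate the proof of Corollary~\ref{t22} line by line, with Theorem~\ref{volterra2} in place of Theorem~\ref{volterra}, Hypothesis~$\mathrm{F}^-$ in place of~$\mathrm{F}^+$, the domain $D^-(\delta,R)$ in place of $D^+(\delta,R)$, and the backward path $t-z=-r$, $0\le r<\infty$, in place of the forward one. First I would note that $w_{\sin}(z)=e^{iz}$ is a non-trivial solution of the sine equation~\eqref{3}; since $F$ satisfies Hypothesis~$\mathrm{F}^-$, Theorem~\ref{volterra2} supplies a unique solution $E^+(z)$ of~\eqref{1} on $D^-(\delta,R)$ for which the estimate~\eqref{est-w2} holds.

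The one computation that has to be carried out is the evaluation of the quantity $M(y)$ in Theorem~\ref{volterra2}. The crucial point is that $|e^{i(s+iy)}|=e^{-y}$ is constant along the horizontal line of imaginary part $y$, so $M(y)=\sup_{s\le x}|e^{i(s+iy)}|=e^{-y}=|e^{iz}|$; in particular the fact that the supremum now runs over $s\le x$ rather than $s\ge x$ is irrelevant. Dividing~\eqref{est-w2} by $|e^{iz}|$ then shows that $v_1(z):=E^+(z)e^{-iz}-1$ satisfies~\eqref{R_s}, once one recognizes the integral appearing on the right of~\eqref{est-w2} as $\int_z^\infty|F(t)|\,|dt|$ taken along the path $t-z=-r$. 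Taking instead $w_{\sin}(z)=e^{-iz}$, for which $M(y)=e^{y}=|e^{-iz}|$, produces $E^-(z)$ together with the corresponding bound for $v_2(z)=E^-(z)e^{iz}-1$; these two statements give~\eqref{asymp-sol} and~\eqref{R_s}. Uniqueness of $E^+$ and $E^-$ is inherited verbatim from the uniqueness clause of Theorem~\ref{volterra2}.

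If one also wants the nonoscillatory and linear-independence conclusions recorded in Corollary~\ref{t22}, the argument is again identical: the limit condition in Hypothesis~$\mathrm{F}^-$ analogous to~\eqref{trimming-edges} forces $\int_z^\infty|F(t)|\,|dt|\to0$ uniformly on $D^-(\delta,R)$ as $R\to\infty$, so by~\eqref{R_s} we have $|v_s(z)|<1$ on $D^-(\delta,R')$ for $R'$ large enough, whence $E^\pm$ has no zeros there and is nonoscillatory by the standard uniqueness theorem, while~\eqref{asymp-sol} makes $E^+$ and $E^-$ linearly independent. I do not anticipate a genuine obstacle here; the only things to watch are the orientation of the integration path and of the supremum defining $M(y)$, and the observation — which is exactly what makes the passage from the $\mathrm{F}^+$ setting to the $\mathrm{F}^-$ setting cost-free — that $|e^{\pm iz}|$ is constant on horizontal lines.
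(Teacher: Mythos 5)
Your proposal is correct and matches the paper exactly: the authors also dispatch Corollary~\ref{vol-} by invoking Theorem~\ref{volterra2} and noting that the argument of Corollary~\ref{t22} transfers verbatim, the only point deserving comment being precisely the one you isolate --- that $|e^{\pm iz}|$ is constant on horizontal lines, so the supremum defining $M(y)$ over $s\le x$ gives the same value $|e^{\pm iz}|$ as the supremum over $s\ge x$ did in the $\mathrm{F}^+$ case.
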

}

As discussed earlier, we proceed to consider solutions of the perturbed sine equation \eqref{1} which are related to oscillatory solutions of type $\sin(z-z_0)$ of the sine equation \eqref{3}.

\begin{corollary}\label{t2}
	For any oscillatory solution  $ S(z)$ of \eqref{1} in $ D^+(\delta,R) $, there exist two constants $ b\neq 0 $ and $ z_0=x_0+iy_0 $, such that
	\begin{equation}\label{osc-solution}
	S(z) = b\Big[\sin(z-z_0) + v(z)\Big],
	\end{equation}
	where
	\begin{equation}\label{osc-solution-2}
	|v(z)| \le  \cosh \left( y-y_{0}\right)
	\left\{ \exp \left[\int_{z}^{\infty }\left| F(t) \right| |dt| \right] -1 \right\},
	\quad  z\in D^+(\delta,R),
	\end{equation}
	and the path of integration is $ t-z = r $, $ 0\le r< \infty $. 
\end{corollary}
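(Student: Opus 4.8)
The plan is to reduce the statement to Theorem~\ref{volterra} and Corollary~\ref{t22} via a short linear-algebra argument. First I would use Corollary~\ref{t22}: on the simply connected domain $D^+(\delta,R)$ the equation \eqref{1} has a two-dimensional solution space, and $E^+(z),E^-(z)$ are linearly independent, hence a basis. So the given oscillatory solution can be written $S(z)=c_1E^+(z)+c_2E^-(z)$ for some constants $c_1,c_2$. Since $E^+$ and $E^-$ each have no zeros in $D^+(\delta,R')$ for $R'$ large (as shown in the proof of Corollary~\ref{t22}), a nonzero scalar multiple of either one is nonoscillatory; the hypothesis that $S$ is oscillatory therefore forces $c_1\neq 0$ and $c_2\neq 0$.

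Next I would transfer this to the sine equation. The Volterra equation \eqref{vol-equation} is linear in the pair $(w,w_{\sin})$, and by Theorem~\ref{volterra} its solution is unique; since $E^+$ (resp.\ $E^-$) is the Volterra solution attached to $w_{\sin}(z)=e^{iz}$ (resp.\ $e^{-iz}$), the combination $S=c_1E^++c_2E^-$ is the unique Volterra solution attached to $w_{\sin}(z)=c_1e^{iz}+c_2e^{-iz}$. Because $c_1c_2\neq 0$, this function can be put in the form $b\sin(z-z_0)$ with $b\neq 0$ and $z_0=x_0+iy_0$: matching the coefficients of $e^{iz}$ and $e^{-iz}$ in $b\sin(z-z_0)=\frac{be^{-iz_0}}{2i}e^{iz}-\frac{be^{iz_0}}{2i}e^{-iz}$ gives the solvable relations $e^{-2iz_0}=-c_1/c_2$ and $b^2=4c_1c_2$ (any choice of branches works). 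Applying the estimate \eqref{est-w} of Theorem~\ref{volterra} with $w_{\sin}(z)=b\sin(z-z_0)$ then yields
\[
|S(z)-b\sin(z-z_0)|\le M(y)\left\{\exp\left[\int_z^\infty|F(t)|\,|dt|\right]-1\right\},\qquad z\in D^+(\delta,R),
\]
where $M(y)=\sup_{s\ge x}|b\sin(s+iy-z_0)|$ and $z=x+iy$.

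Finally I would bound $M(y)$. Writing $s+iy-z_0=(s-x_0)+i(y-y_0)$ and using $|\sin(u+iv)|^2=\sin^2u+\sinh^2v\le\cosh^2v$, the right-hand side is independent of $s$, so $M(y)\le|b|\cosh(y-y_0)$. Substituting this bound and dividing by $|b|$, with $v(z):=S(z)/b-\sin(z-z_0)$ so that \eqref{osc-solution} holds, produces exactly \eqref{osc-solution-2}. I do not expect a real obstacle here; the one place that needs care is the bookkeeping in the linearity-plus-uniqueness step that identifies $S$ with the Volterra solution attached to $b\sin(z-z_0)$, namely that the Volterra map $w_{\sin}\mapsto w$ is linear and injective and that Theorem~\ref{volterra} supplies uniqueness, so the coefficients $c_1,c_2$ pass unchanged from the $(E^+,E^-)$ description of $S$ to the $(e^{iz},e^{-iz})$ description of the corresponding $w_{\sin}$.
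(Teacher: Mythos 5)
Your argument is correct, and it takes a genuinely different route from the paper's for the crucial error estimate. The paper's proof, which is stated very tersely, decomposes $S = c_1 E^+ + c_2 E^-$, substitutes the asymptotic forms $E^\pm(z) = e^{\pm iz}(1+v_{1,2}(z))$ from \eqref{asymp-sol}, and then directly bounds the resulting error
\[
v(z) = \frac{1}{2i}\left[e^{i(z-z_0)}v_1(z) - e^{-i(z-z_0)}v_2(z)\right]
\]
using $\tfrac12(|e^{i(z-z_0)}| + |e^{-i(z-z_0)}|) = \cosh(y-y_0)$ together with \eqref{R_s}. Your proposal instead observes that the Volterra operator $w_{\sin}\mapsto w$ defined by \eqref{vol-equation} is linear and (by the uniqueness in Theorem~\ref{volterra}) injective, so that $S$ is precisely the Volterra solution attached to $w_{\sin}(z) = b\sin(z-z_0)$, and then applies the bound \eqref{est-w} of Theorem~\ref{volterra} with this choice of $w_{\sin}$; the bound $M(y)\le |b|\cosh(y-y_0)$ then follows from $|\sin(u+iv)|^2 = \sin^2 u + \sinh^2 v \le \cosh^2 v$. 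Both routes are valid and arrive at the same inequality, but yours has the pedagogical advantage of avoiding the explicit bookkeeping of the two separate correction terms $v_1, v_2$, at the modest extra cost of the linearity-plus-uniqueness observation (which is immediate once stated). Your determination of $b$ and $z_0$ via $e^{-2iz_0}=-c_1/c_2$ and $b^2 = 4c_1c_2$ is consistent with the paper's specific choices $b = 2ic_1 e^{iz_0}$.
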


\begin{proof}
	From Corollary~\ref{t22},  $ E^+(z) $ and $ E^-(z) $ are linearly independent nonoscillatory solutions of \eqref{1} in $D^+(\delta,R)$. Let $ S(z) $ be any solution of \eqref{1} in $D^+(\delta,R) $. Then there exist two constants $ c_1 $ and $ c_2 $ such that
	\begin{equation}\label{new-1}
	S(z) = c_1 E^+(z) + c_2 E^-(z), \quad z\in D^+(\delta,R).
	\end{equation}
	If $ S(z) $ is oscillatory, then $ c_1c_2\neq0 $. Let $ z_0 $ be a point satisfying $ e^{-2iz_0} = -c_1 / c_2 $ and let $ b= 2ic_1 e^{iz_0} $. Then  \eqref{osc-solution} and \eqref{osc-solution-2} follow from \eqref{asymp-sol} and \eqref{new-1}.
\end{proof}

By using Corollary~\ref{vol-} and a similar proof to that of Corollary~\ref{t2}, we obtain the following result.
\begin{corollary}	\label{remark-ab}
	For any oscillatory solution $ S(z)$ of \eqref{1} in $ D^-(\delta,R) $, there exist two constants $ b\neq 0 $ and $ z_0=x_0+iy_0 $, such that \eqref{osc-solution} and \eqref{osc-solution-2} hold, where the path of integration is $ t-z = -r $, $ 0\le r< \infty $. 
\end{corollary}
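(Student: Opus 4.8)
The plan is to mirror the proof of Corollary~\ref{t2} almost verbatim, replacing each ingredient tied to Hypothesis~$\mathrm{F}^+$ by its Hypothesis~$\mathrm{F}^-$ counterpart. First I would apply Corollary~\ref{vol-} to obtain solutions $E^+(z)$ and $E^-(z)$ of \eqref{1} in $D^-(\delta,R)$ satisfying \eqref{asymp-sol} and \eqref{R_s} with the path of integration $t-z=-r$, $0\le r<\infty$; these are linearly independent (immediate from \eqref{asymp-sol}) and, by the argument analogous to the one used in Corollary~\ref{t22} (now invoking the trimming assumption built into Hypothesis~$\mathrm{F}^-$), are nonoscillatory in $D^-(\delta,R)$. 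Since $\{E^+,E^-\}$ is then a fundamental system, the given oscillatory solution may be written as $S(z)=c_1E^+(z)+c_2E^-(z)$ for suitable constants $c_1,c_2$, and one must have $c_1c_2\neq0$: if, say, $c_2=0$ then $S$ would be a nonzero scalar multiple of the zero-free $E^+$, hence zero-free in a subdomain $D^-(\delta,R')$ with $R'>R$ large, contradicting oscillation.

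Next I would set, exactly as in Corollary~\ref{t2}, $z_0=x_0+iy_0$ to be any point with $e^{-2iz_0}=-c_1/c_2$ and $b=2ic_1e^{iz_0}\neq0$. Expanding $\sin(z-z_0)=\tfrac{1}{2i}\bigl(e^{i(z-z_0)}-e^{-i(z-z_0)}\bigr)$ and using $e^{2iz_0}=-c_2/c_1$ gives $b\sin(z-z_0)=c_1e^{iz}+c_2e^{-iz}$, so \eqref{asymp-sol} yields
$$
S(z)-b\sin(z-z_0)=c_1e^{iz}v_1(z)+c_2e^{-iz}v_2(z),\qquad z\in D^-(\delta,R),
$$
which is \eqref{osc-solution} with $v(z)=b^{-1}\bigl(c_1e^{iz}v_1(z)+c_2e^{-iz}v_2(z)\bigr)$.

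It then remains to verify the bound \eqref{osc-solution-2}. Writing $z=x+iy$ and letting $\eta(z)=\exp\bigl[\int_z^\infty|F(t)|\,|dt|\bigr]-1$ denote the common majorant for $|v_1(z)|$ and $|v_2(z)|$ from \eqref{R_s}, I would use the identities $|c_1|/|c_2|=|e^{-2iz_0}|=e^{2y_0}$ and $|b|=2|c_1|e^{-y_0}$ to compute
$$
|v(z)|\le\frac{\eta(z)}{|b|}\bigl(|c_1|\,e^{-y}+|c_2|\,e^{y}\bigr)=\frac{\eta(z)}{2}\bigl(e^{\,y_0-y}+e^{\,y-y_0}\bigr)=\cosh(y-y_0)\,\eta(z),
$$
which is precisely \eqref{osc-solution-2}. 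The whole argument is routine once the $\mathrm{F}^-$ analogues of Theorem~\ref{volterra} and Corollary~\ref{t22} are in hand; there is no genuine obstacle, the only points needing attention being that the integration path $t-z=-r$ is carried consistently throughout (this is already encoded in Corollary~\ref{vol-}) and the verification that both coefficients $c_1,c_2$ are nonzero for an oscillatory $S$.
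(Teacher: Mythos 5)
Your argument is correct and follows precisely the route the paper intends: the paper's proof of this corollary is simply the one-line instruction to combine Corollary~\ref{vol-} with the argument of Corollary~\ref{t2}, and you have reproduced that argument faithfully, with the $D^-(\delta,R)$ integration path carried through. The explicit verification of the $\cosh(y-y_0)$ bound, which the paper leaves implicit even in Corollary~\ref{t2}, is a welcome and correct addition.
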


\subsection{\sc  Zero distribution of oscillatory solutions}\label{zeros-asymptotic-solutions-sec}
Let $ S(z) $ be an oscillatory solution of \eqref{1} in $ D^+(\delta,R) $. Then from \eqref{osc-solution} we see that the zeros of $ S(z) $ are the zeros of $ \sin(z-~z_0) + v(z) $. Note that $\sin(z-z_0)$ is oscillatory on the horizontal line $\im (z)=y_0$. We will prove that $S(z)$ is oscillatory in the intersection of the horizontal strip $y_0-\gamma<\im (z)<y_0+\gamma$ and the domain $D^+(\delta,R)$, where $\gamma>0$ is a small constant. 
%
Let $H_0$ denote a half-plane
	\begin{equation*}\label{H_0}
	H_0=\{z:\re(z)>\sigma_{0}\},
	\end{equation*}
where $\sigma_0>0$ is chosen large enough so that both $H_0\subset D^+(\delta,R)$ and   
	\begin{equation}\label{34}
	\exp \left[\int_{z}^{\infty }\left| F(t) \right| |dt| \right] 
	<1+\frac{\sin (\gamma) }{\cosh (\gamma) },\quad z\in H_0,  
	\end{equation}
are satisfied. Observe that \eqref{34} follows from \eqref{trimming-edges}. In addition,  we may assume that $z_0=x_{0}+iy_{0}$ satisfies $z_{0}-\gamma \in
H_0$ and $z_0-\pi +\gamma\not\in H_0$. For $k\geq 0$, let 
$Q_{k,\gamma }$ denote the square
	\begin{equation*}\label{squares}
	Q_{k,\gamma }=\{z=x+iy :\left\vert x-x_{0}-k\pi \right\vert <\gamma,\ \left\vert
	y-y_{0}\right\vert <\gamma\}.
	\end{equation*}
For any fixed $k\geq 0$, the point $z_0+k\pi$ is the center of the square $Q_{k,\gamma}$, see Figure \ref{f1}. 
\begin{figure}[ht]
		\definecolor{uququq}{rgb}{0.25,0.25,0.25}
		\begin{tikzpicture}[line cap=round,line join=round,>=triangle 45,x=1.0cm,y=1.0cm]
		\clip(-5.3,-3.18) rectangle (5.46,3.14);
		\draw (-2,2)-- (-2,-2);
		\draw (-2,-2)-- (2,-2);
		\draw (2,-2)-- (2,2);
		\draw (-2,2)-- (2,2);
		\draw [dash pattern=on 1pt off 1pt] (-3,2)-- (-2,2);
		\draw [dash pattern=on 1pt off 1pt] (-3,-2)-- (-2,-2);
		\draw [dash pattern=on 1pt off 1pt] (2,2)-- (3,2);
		\draw [dash pattern=on 1pt off 1pt] (2,-2)-- (3,-2);
		\draw (2.,0.4) node[anchor=north west] {\tiny $L_1$};
		\draw (-0.32,2.5) node[anchor=north west] {\tiny $L_2$};
		\draw (-2.7,0.4) node[anchor=north west] {\tiny $L_3$};
		\draw (-0.32,-1.98) node[anchor=north west] {\tiny $L_4$};
		\draw (-0.2,0.05) node[anchor=north west] {\tiny $z_0+k\pi$};
		\draw (1.3,2.7) node[anchor=north west] {\tiny $(x_0+k\pi+\gamma)+i(y_0+\gamma)$};
		\draw (-5,2.7) node[anchor=north west] {\tiny $(x_0+k\pi-\gamma)+i(y_0+\gamma)$};
		\draw (-4.9,-2.08) node[anchor=north west] {\tiny $(x_0+k\pi-\gamma)+i(y_0-\gamma)$};
		\draw (1.3,-2.08) node[anchor=north west] {\tiny $(x_0+k\pi+\gamma)+i(y_0-\gamma)$};
		\begin{scriptsize}
		\fill [color=uququq] (0,0) circle (1.5pt);
		\fill [color=black] (2,-2) circle (1.5pt);
		\fill [color=black] (2,2) circle (1.5pt);
		\fill [color=black] (-2,2) circle (1.5pt);
		\fill [color=black] (-2,-2) circle (1.5pt);
		\end{scriptsize}
		\end{tikzpicture}
		\caption{The square $Q_{k,\gamma}$.}
		\label{f1}
	\end{figure}
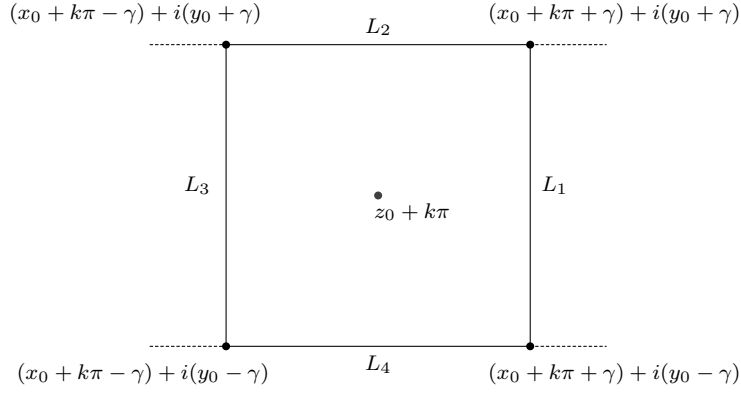

\begin{lemma}\label{t5}
The function $S(z)$ in \eqref{osc-solution} is oscillatory in the half-plane $H_0$. Specifically, $S(z)$ has precisely one zero in each square $Q_{k,\gamma }$, and no other zeros in $H_0$. In addition, we have
	\begin{equation}\label{nr}
	n\left(r,H_0, \frac{1}{S}\right)= \frac{r}{\pi}  (1+o(1)), \quad r\to\infty,
	\end{equation}
where $ n(r,H_0, 1/S) $ counts only those zeros  of $ S(z) $ that lie in $ H_0 $ and $ |z|\le r $.
 \end{lemma}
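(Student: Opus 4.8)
The plan is to prove Lemma~\ref{t5} by a Rouch\'e-type comparison of $S(z)$ with the pure sine $\sin(z-z_0)$, carried out square by square, followed by a direct count of the resulting zeros. Throughout I take $\gamma<\pi/2$, so that the squares $Q_{k,\gamma}$ are pairwise disjoint and $\sin\gamma>0$, and I use the elementary identity $|\sin(s+it)|^2=\sin^2 s+\sinh^2 t$ repeatedly.

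First I would establish that $S$ has exactly one zero in each $Q_{k,\gamma}$. Writing $\xi=z-z_0-k\pi$ one has $\sin(z-z_0)=(-1)^k\sin\xi$, and inspecting the two kinds of side of $\partial Q_{k,\gamma}$ shows $|\sin(z-z_0)|=|\sin\xi|\ge\sin\gamma$ on $\partial Q_{k,\gamma}$: on a vertical side $|\sin\xi|^2=\sin^2\gamma+\sinh^2(\im\xi)\ge\sin^2\gamma$, and on a horizontal side $|\sin\xi|^2\ge\sinh^2\gamma>\sin^2\gamma$. On the other hand, every $z\in Q_{k,\gamma}$ satisfies $|\im z-y_0|<\gamma$ and, because $z_0-\gamma\in H_0$, also $Q_{k,\gamma}\subset H_0$; hence \eqref{osc-solution-2} together with \eqref{34} gives $|v(z)|\le\cosh(\im z-y_0)(\exp[\int_z^\infty|F(t)|\,|dt|]-1)<\cosh\gamma\cdot\frac{\sin\gamma}{\cosh\gamma}=\sin\gamma\le|\sin(z-z_0)|$ on $\partial Q_{k,\gamma}$. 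Rouch\'e's theorem, applied on the Jordan curve $\partial Q_{k,\gamma}$, then shows that $\sin(z-z_0)+v(z)$, hence $S(z)$ by \eqref{osc-solution}, has exactly one zero in $Q_{k,\gamma}$, matching the zero $z_0+k\pi$ of $\sin(z-z_0)$.

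Next I would rule out zeros of $S$ in $H_0\setminus\bigcup_{k\ge0}Q_{k,\gamma}$. If $z\in H_0$ lies in none of the squares, then either (a)~$|\im z-y_0|\ge\gamma$, or (b)~$|\im z-y_0|<\gamma$ while $|\re z-x_0-k\pi|\ge\gamma$ for every $k\ge0$. In case (a) one has $|\sin(z-z_0)|\ge|\sinh(\im z-y_0)|$, and since $\tanh$ is increasing and $\sinh\gamma>\sin\gamma$, \eqref{osc-solution-2} and \eqref{34} give $|v(z)|<\cosh(\im z-y_0)\cdot\frac{\sin\gamma}{\cosh\gamma}\le\cosh(\im z-y_0)\tanh|\im z-y_0|=\sinh|\im z-y_0|\le|\sin(z-z_0)|$. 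In case (b), the assumption $z_0-\pi+\gamma\notin H_0$ forces $\re z-x_0>-\pi+\gamma$, and being at distance at least $\gamma$ from every point of $\{x_0+k\pi:k\ge0\}$ then yields $|\sin(\re z-x_0)|\ge\sin\gamma$, so $|\sin(z-z_0)|\ge\sin\gamma$, whereas $|v(z)|<\cosh\gamma\cdot\frac{\sin\gamma}{\cosh\gamma}=\sin\gamma$. In both cases $|v(z)|<|\sin(z-z_0)|$, so $S(z)\ne0$. Combined with the previous paragraph this proves that $S$ is oscillatory in $H_0$, with precisely one zero in each $Q_{k,\gamma}$ and no other zeros.

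Finally, for \eqref{nr}: if $\zeta_k$ denotes the zero of $S$ in $Q_{k,\gamma}$, then $|\zeta_k-(z_0+k\pi)|<\gamma\sqrt2$, hence $|\zeta_k|=|z_0+k\pi|+O(1)=k\pi+O(1)$ as $k\to\infty$, so $n(r,H_0,1/S)=\#\{k\ge0:|\zeta_k|\le r\}=\frac r\pi+O(1)=\frac r\pi(1+o(1))$ as $r\to\infty$. I expect the only genuine effort to be the bookkeeping in the second step — checking that every point of $H_0$ outside the squares falls into case (a) or (b), correctly exploiting the prescribed positions of $z_0$ relative to $H_0$, and seeing that all the estimates collapse onto the single hypothesis \eqref{34}; the Rouch\'e step and the final count are then routine.
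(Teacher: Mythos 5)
Your proof is correct and follows essentially the same approach as the paper: the identity $|\sin(s+it)|^2=\sin^2 s+\sinh^2 t$, the Rouch\'e comparison of $\sin(z-z_0)+v(z)$ against $\sin(z-z_0)$ on $\partial Q_{k,\gamma}$, the exclusion of zeros outside the squares via the $\tanh$/$\sinh$ estimate for $|\im z-y_0|\geq\gamma$ and the $\sin\gamma$ bound in the horizontal gaps, and a direct count for \eqref{nr}. The only cosmetic differences are that you deduce the count from $|\zeta_k|=k\pi+O(1)$ in one line where the paper splits into the cases $y_0-\gamma\geq 0$, $y_0+\gamma\leq 0$, $|y_0|<\gamma$, and that you make more explicit use of the normalization $z_0-\pi+\gamma\notin H_0$ in ruling out zeros in the leftmost gap.
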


\begin{proof}
Without loss of generality, we may assume that $ b=1 $ in \eqref{osc-solution}.
	Let $z\in\partial Q_{k,\gamma}$. We have
	\begin{eqnarray*}
		|\sin(z-z_0)|^2 &=& \frac14 \left(e^{2(y-y_0)}+e^{-2(y-y_0)}+2\left(\sin^2(x-x_0)-\cos^2(x-x_0)\right)\right)\\	
		& =&\frac14 \left(e^{2(y-y_0)}-2+e^{-2(y-y_0)}+4\sin^2(x-x_0)\right)\\
		&=& \sinh^2(y-y_0)+\sin^2(x-x_0).
	\end{eqnarray*}
	\begin{itemize}
		\item[(1)]
		On the line segments $L_1$ and $L_3$, we have $x-x_0=k\pi \pm \gamma$, $|y-y_0|\le \gamma$, and hence 
		\[
		|\sin(z-z_0)|^2= \sin^2(\gamma)+\sinh^2(y-y_0)\ge \sin^2(\gamma),
		\]
		i.e., $|\sin(z-z_0)|\ge \sin(\gamma)$. On the other hand, from \eqref{osc-solution-2} and \eqref{34}, we obtain
		\begin{eqnarray*}
			\left\vert v\left( z\right) \right\vert &\leq& \cosh \left( y-y_{0}\right)
			\left\{ \exp \left[ \int_{0}^{\infty }\left\vert F\left( z+s\right)
			\right\vert ds\right] -1\right\} \\
			&\le & \cosh(\gamma) \left\{ \exp \left[ \int_{0}^{\infty }\left\vert F\left( z+s\right)
			\right\vert ds\right] -1\right\}<\sin(\gamma).
		\end{eqnarray*}
		This gives us $|\sin(z-z_0)|>|v(z)|$ for $z\in L_1 \cup L_3$.
		\item[(2)]
		On the line segments $L_2$ and $L_4$, we have $k\pi-\gamma \le x-x_0\le k\pi+\gamma$, $y-y_0=\pm \gamma$, and hence
		\[
		|\sin(z-z_0)|^2= \sin^2(x-x_0)+\sinh^2(\gamma)\ge \sinh^2(\gamma)\ge \sin^2(\gamma),
		\]
		i.e., $|\sin(z-z_0)|\ge \sin(\gamma)$. On the other hand, similarly as in Case~(1), 
		$|v(z)|<\sin(\gamma)$.  
		This gives us $|\sin(z-z_0)|>|v(z)|$ for $z\in L_2 \cup L_4$.
	\end{itemize}
	From Cases~(1) and (2), we obtain $|\sin(z-z_0)|>|v(z)|$ on $\partial Q_{k,\gamma}$. As the function $\sin(z-z_0)$ has precisely one zero in each square $Q_{k,\gamma}$, namely $z_0+k\pi$, then by Rouch\'e's theorem, the solution $S(z)=\sin(z-z_0)+ v(z)$ has also precisely one zero in each square $Q_{k,\gamma}$. Obviously, $S(z)$ has no zeros on $\partial Q_{k,\gamma}$.
	
	Now we show that $S(z)$ has no zeros in $H_0$ that lie outside the closed squares $Q_{k,\gamma}\cup\partial Q_{k,\gamma}$.
	If $|y-y_0|>\gamma$, then 
	\begin{eqnarray*}
		|\sin(z-z_0)|^2 &=& \sin^2(x-x_0)+\sinh^2(y-y_0)\ge \sinh^2(y-y_0)\\
		&=& \tanh^2(y-y_0)\cosh^2(y-y_0)\geq \tanh^2(\gamma)\cosh^2(y-y_0),
	\end{eqnarray*}
	while from \eqref{osc-solution-2} and \eqref{34}, we have
	\begin{equation*}
	\left\vert v(z) \right\vert <\cosh (y-y_{0}) \frac{\sin (\gamma)}{\cosh (\gamma)}
	<\cosh (y-y_{0}) \tanh (\gamma).
	\end{equation*}
	It follows that $|\sin (z-z_{0})|>|v(z)|$, and hence $S(z) $ cannot vanish if $|y-y_0|>\gamma$.
	Finally, we consider the rectangles between the closed squares, i.e., the regions
	\begin{equation*}
	\gamma< x-x_{0}-k\pi <\pi -\gamma,\quad |y-y_{0}|\leq \gamma.
	\end{equation*}
	We have now $|\sin(z-z_{0})|>\sin (\gamma)$ and $|v(z)|<\sin(\gamma)$, so that $S(z)$ has no 	zeros in these regions either.
	
	It remains to estimate the number of zeros of $ S(z) $ in $ H_0 $. Suppose first that $ y_0-\gamma\ge 0 $. For each $ k\ge 0 $, $S(z)$ has exactly one zero in each square $Q_{k,\gamma}$ and no other zeros in $ H_0 $.
	%
	%
	Then if $ r $ satisfies 
	$$
	\sqrt{\left(x_{0}+k \pi-\gamma\right)^{2}+\left(y_{0}-\gamma\right)^{2}} \le r <\sqrt{\left(x_{0}+(k+1) \pi-\gamma\right)^{2}+\left(y_{0}-\gamma\right)^{2}}, \quad k\ge  0,
	$$
it can be observed that $ n(r,H_0, 1/S)= k  $ or $ k+1 $. Thus, 
		$$
		k= \frac{r}{\pi} (1+o(1)), \quad r\to \infty,
		$$
which gives \eqref{nr} in the case when $ y_0-\gamma\ge 0 $. An almost identical argument applies in the case when $ y_0+\gamma\le 0 $. Finally, when $ -\gamma<y_0<\gamma $, it can be shown that \eqref{nr} holds by considering
	$$
	x_{0}+k \pi-\gamma\le r< x_{0}+(k+1) \pi-\gamma, \quad k\ge0.
	$$
This completes the proof for any $ y_0 $.
\end{proof}


The reasoning in the proof of Lemma~\ref{t5} can easily be modified to prove the following result, where $ H_0^* $ and $ Q_{k,\gamma}^* $ are the reflections of $ H_0 $ and $ Q_{k,\gamma} $ with respect to the imaginary axis.

\begin{lemma}\label{oscillation-of-solutions}
 The function $S(z)$ in \eqref{osc-solution} is oscillatory in the half-plane $ H_0^* $. Specifically, $S(z)$ has precisely one zero in each square $Q_{k,\gamma }^*$, and no other zeros in $H_0^*$. In addition, we have 
 	\begin{equation*}\label{nr2}
 	n\left(r,H_0^*, \frac{1}{S}\right)= \frac{r}{\pi}  (1+o(1)), \quad r\to\infty,
 	\end{equation*}
 where $ n(r,H_0^*, 1/S) $ counts only those zeros  of $ S(z) $ that lie in $ H_0^* $ and $ |z|\le r $.
\end{lemma}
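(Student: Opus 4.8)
\emph{Proof proposal.} The plan is to reduce Lemma~\ref{oscillation-of-solutions} to Lemma~\ref{t5} by reflecting in the imaginary axis. Here $S(z)$ is an oscillatory solution of \eqref{1} in $D^-(\delta,R)$, so by Corollary~\ref{remark-ab} it has the representation \eqref{osc-solution}--\eqref{osc-solution-2} with the path of integration $t-z=-r$. I would set $\widehat{S}(z)=S(-z)$ and $\widehat{F}(z)=F(-z)$, so that $\widehat{S}$ solves $\widehat{w}''+(1-\widehat{F}(z))\widehat{w}=0$. Since the reflection $z\mapsto-z$ carries $D^-(\delta_0,R_0)$ onto $D^+(\delta_0,R_0)$ and carries the ray $t=z-r$ onto $t=z+r$, one checks that $\widehat{F}$ satisfies Hypothesis~$\mathrm{F}^+$ with the same $\delta_0$ and $\delta$, and that the supremum of $\int_z^\infty|\widehat{F}(t)||dt|$ over $D^+(\delta,R)$ equals the corresponding quantity for $F$ over $D^-(\delta,R)$. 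A direct substitution then shows $\widehat{S}$ again has the form \eqref{osc-solution} in $D^+(\delta,R)$ with constants $\widehat{b}=-b$, $\widehat{z}_0=-z_0$, and a perturbation obeying \eqref{osc-solution-2}; here nothing is lost because $\cosh(\im(-z)-\im(-z_0))=\cosh(\im z-\im z_0)$.

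Next I would apply Lemma~\ref{t5} to $\widehat{S}$: there is a half-plane $\widehat{H}_0=\{\re z>\sigma_0\}\subset D^+(\delta,R)$ and squares $\widehat{Q}_{k,\gamma}$ centred at the zeros of $\sin(z-\widehat{z}_0)$ in $\widehat{H}_0$ such that $\widehat{S}$ has exactly one zero in each $\widehat{Q}_{k,\gamma}$, no other zeros in $\widehat{H}_0$, and $n(r,\widehat{H}_0,1/\widehat{S})=\tfrac{r}{\pi}(1+o(1))$ as in \eqref{nr}. Reflecting back, the zeros of $S$ in $H_0^*=\{\re z<-\sigma_0\}=-\widehat{H}_0$ are precisely the images under $z\mapsto-z$ of the zeros of $\widehat{S}$ in $\widehat{H}_0$; hence $S$ has exactly one zero in each $Q_{k,\gamma}^*=-\widehat{Q}_{k,\gamma}$, none elsewhere in $H_0^*$, and since $|z|=|-z|$ the counting functions agree, giving $n(r,H_0^*,1/S)=\tfrac{r}{\pi}(1+o(1))$.

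Alternatively, and this is presumably what the phrase ``can easily be modified'' refers to, one repeats the proof of Lemma~\ref{t5} verbatim in $D^-(\delta,R)$: the identity $|\sin(z-z_0)|^2=\sinh^2(y-y_0)+\sin^2(x-x_0)$ is unchanged; one picks $\sigma_0$ large enough that $H_0^*\subset D^-(\delta,R)$ and the analogue of \eqref{34} (with the integral taken along $t=z-r$) holds there, which follows from the trimming condition in Hypothesis~$\mathrm{F}^-$; and the two boundary cases for $\partial Q_{k,\gamma}^*$, the exclusion of zeros with $|y-y_0|>\gamma$ or lying in the rectangles between the squares, and the counting step all carry over word for word once the squares are indexed so that $\re z\to-\infty$. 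The only point needing care in either approach is the bookkeeping of the path of integration: with $t-z=-r$ the quantity $\int_z^\infty|F(t)||dt|$ appearing in \eqref{osc-solution-2} and in \eqref{34} refers to integration into the region $\re t\to-\infty$, which is exactly the direction in which $H_0^*$ extends, so Hypothesis~$\mathrm{F}^-$ does make it arbitrarily small; beyond this there is no genuine obstacle, as the argument is a mechanical reflection of that for Lemma~\ref{t5}.
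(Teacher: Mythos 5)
Your proposal is correct. The paper gives no written proof for this lemma; its entire justification is the one-sentence remark preceding the statement that the reasoning in the proof of Lemma~\ref{t5} ``can easily be modified'' under reflection, and your second approach---re-running that argument in $D^-(\delta,R)$ with path $t-z=-r$, the half-plane $H_0^*$ extending leftward, and the analogue of \eqref{34} supplied by the trimming condition in Hypothesis~$\mathrm{F}^-$---is exactly this. Your first approach, substituting $\widehat{S}(z)=S(-z)$ and $\widehat{F}(z)=F(-z)$ so that Hypothesis~$\mathrm{F}^-$ for $F$ becomes Hypothesis~$\mathrm{F}^+$ for $\widehat{F}$ and Lemma~\ref{t5} applies verbatim, is a tidy formalization of the same reflection idea and avoids re-examining the Rouch\'e argument; your bookkeeping (the transported constants $\widehat{b}=-b$ and $\widehat{z}_0=-z_0$, the evenness of $\cosh$, the correspondence between the two integration paths, and $|z|=|-z|$ for the counting function) is all correct. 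Either route is sound and both agree with what the paper intends.
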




\section{Proof of Theorem \ref{theo-growth}}\label{proof-of-thm1}



Let $f(z)$ be a non-trivial solution of \eqref{e1}. Then from the transformation \eqref{g}, $g(z)$ is a solution of the normalized equation \eqref{s-e1}. Let $g_j(z)$ denote the restriction of $g(z)$ to $G_j(R)$ in \eqref{G_j}, for $ j\in\{0,\ldots,n+1\} $. By Lemma~\ref{Q-P} and \eqref{LT}, there is a solution $w_j(\z)$ of the perturbed sine equation \eqref{e2} in a domain containing $ \tilde{G}_j(\delta, \tilde{R}) $ in \eqref{image G_j}, such that
	\begin{equation}\label{pr121}
	g_j(z) =z^{-n/4} (1+\ell(z))^{-1/4} w_j(\zeta),\quad \z=L_j(z).
	\end{equation}
Regarding the fourth roots in \eqref{pr121}, $ z^{-n/4} $ is defined by \eqref{FR} and $ (1+\ell(z))^{-1/4} $ is defined by \eqref{PB}. For the rest of these Sections~\ref{proof-of-thm1}, \ref{proof-of-thm2} and \ref{proof-of-thm3}, the branches for square roots and fourth roots are defined by \eqref{PB},  \eqref{SR} and \eqref{FR}. 
In \eqref{pr121}, there is a unique correspondence between $g_j$ and $w_j$ by means of Lemma~\ref{L3-new}.

By Lemmas~\ref{L3-new} and \ref{Q-P}, the function $T(\zeta )$ in \eqref{e2} is analytic in a domain containing $\widetilde{G}_j(\delta, \tilde{R})$ for sufficiently small $ \delta $ and sufficiently large $ \tilde{R} $.
 Notice that $ T(\zeta) $ satisfies Hypothesis~$ \mathrm{F}^+ $ when $ j $ is even, and satisfies Hypothesis~$ \mathrm{F}^- $  when $ j $ is odd. Moreover, from Lemma~\ref{Q-P}, we obtain
	$$
	\int_\zeta^\infty |T(t)| |dt| = O\left(\frac{1}{|\zeta|}\right), \quad \zeta \to\infty,
	$$
where the path of integration is indicated as in Hypotheses $ \mathrm{F}^+ $ or $ \mathrm{F}^- $. 
 It follows from the results on asymptotic integration in Section~\ref{asymp_sols}, that equation \eqref{e2} has two linearly independent solutions $ w_j^+(\zeta) $ and $ w_j^-(\zeta) $ asymptotic to $ e^{i\zeta} $ and $ e^{-i\zeta} $, respectively, in $ \widetilde{G}_j(\delta,\tilde{R}) $. Then there exist two constants $c_{j1},c_{j2}$, such that
	\begin{equation}\label{w_j}
	w_j(\zeta)= c_{j1}w_j^+(\zeta)+c_{j2}w_j^-(\zeta)= \left( c_{j1}e^{i\zeta }+c_{j2}e^{-i\zeta } \right) (1+o(1)),
	\end{equation} 
as $\z\to\infty$ in $\widetilde{G}_j(\delta,\tilde{R})$.
From Lemma \ref{L2}, we have
		$$
		\z(z)=\frac{2}{n+2}z^{(n+2)/2}(1+o(1)),
		$$
as $ z\to\infty $ in $G_j(R)$. By substituting $\z(z)$ in $w_j(\z)$ and using \eqref{pr121}, we find that $g_j$ has the asymptotic form
		\begin{equation}\label{fasy}
		g_j(z)= z^{-n/4} \left(c_{j1}F_1(z)+c_{j2}F_2(z)\right) (1+o(1)),
		\end{equation}		
as $ z\to \infty $ in $ G_j(R) $, where
		$$
		F_s(z)= \exp\left\{ (-1)^{s+1} \frac{2i}{n+2} z^{(n+2)/2} (1+o(1))\right\},\quad s=1,2.
		$$

Set $z=re^{i\theta}$, and let $h_s(\theta)$ be the Phragm\'en-Lindel\"of indicator function of $F_s(z)$. Then~we~have
		\begin{align*}
		h_s(\theta)&= \limsup_{r\to\infty} r^{-{(n+2)/2}} \log |F_s(re^{i\theta})|\\
					&= \limsup_{r\to\infty} r^{-{(n+2)/2}} \log \left| \exp\left\{ (-1)^{s+1} \frac{2i}{n+2} r^{(n+2)/2} e^{i\frac{n+2}{2}\theta} (1+o(1))\right\}\right|\\
					&= (-1)^{s} \frac{2}{n+2} \sin\left( \frac{n+2}{2}\theta\right), \quad \psi_{j-1} <\theta<\psi_{j+1}, \quad s=1,2.
		\end{align*}
Hence, $h_1(\theta)=-h_2(\theta)$ for $ \psi_{j-1} <\theta<\psi_{j+1} $, and $ h_1(\psi_j)=h_2(\psi_j)=0 $.
%
This means that if $F_1(z)$ blows up (resp.~decays to zero) exponentially on all rays in an open sector between two consecutive critical rays, then $F_2(z)$ decays to zero (resp.~blows up) exponentially on all rays in the same sector. 

Set, for $ j=0, \ldots, n+1 $,
	\begin{equation}\label{plus-minus}
	\begin{split}
	G_{j}^+ &=\{z\in G_j(R): \psi_{j}<\arg (z)<\psi_{j+1}\},\\
	G_{j}^- &=\{z\in G_j(R): \psi_{j-1} < \arg (z)<\psi_j\}.
	\end{split}		
	\end{equation}
%
The restriction $g_j$, $j=0,\ldots,n+1$, cannot decay to zero exponentially in both $G_{j}^+$ and $G_{j}^-$, for otherwise $h_{g_j}(\theta)<0$ and $h_{g_j}(\theta+2\pi/(n+2))<0$, where $\theta\in (\psi_{j-1},\psi_{j})$ and $\theta+2\pi/(n+2) \in (\psi_{j},\psi_{j+1})$, 
which is impossible by \cite[p.~56]{levin}. This proves that assertion (b) in Theorem~\ref{theo-growth} holds for the solution $ g $ of the normalized equation \eqref{s-e1}. Hence, from \eqref{g}, the assertion (b) also holds for $ f $.

From (b) and the asymptotic form \eqref{fasy}, it follows that $g_j$ satisfies one of the following three possibilities:
\begin{enumerate}
\item[(i)] $g_j$ blows up in $G_{j}^+$ and decays to zero in $G_{j}^-$;

\item[(ii)] $g_j$ decays to zero in $G_{j}^+$ and blows up in $G_{j}^-$;

\item[(iii)] $g_j$ blows up in both $G_{j}^+$ and $G_{j}^-$.
\end{enumerate}
If (i) or (ii) holds, then either $c_{j1}=0$ or $c_{j2}=0$, say $c_{j2}=0$, and consequently $g_j$ is asymptotically comparable to the remaining exponential factor $F_1(z)$. 
If (iii) holds, then $c_{j1}c_{j2}\neq 0$. Since $h_1(\theta)=-h_2(\theta)$ for all $\theta\in (\psi_{j-1},\psi_{j+1})$, it follows that in each region $ G_j^+$, $G_j^-  $, precisely one of the two exponential terms is dominant. This means that, in one region, say $ G_j^+ $, we have $|g_j| \asymp |F_1(z)|$, while in the other one, which is $ G_j^- $, we have $|g_j| \asymp |F_2(z)|$, as $z\to\infty$.

The reasoning above can be repeated for the restrictions of $g$ in all the remaining sector 
pairs~$G_j(R)$. The union of $G_j(R)$'s includes the entire complex plane minus the disc $|z|\leq R$. Since the solutions of \eqref{s-e1} are entire, this gives a unique behaviour of $g$ in sectors. In particular, $g$ is asymptotically comparable to one of $F_s(z)$, $s=1,2$, in each sector between two adjacent critical rays. This proves that the first statement of part~(a) in Theorem~\ref{theo-growth} holds for $ g $.

Finally, from \eqref{g} we see that the above asymptotic behaviour of $ g $ holds for $ f $ along any ray between the critical rays  $ \arg( z)=\theta_j $ , $ j=0,\ldots, n+1 $, given in \eqref{sectors+rays}. Furthermore,  from \eqref{fasy}, along any ray between critical rays, $ f $ is asymptotically comparable to either $ E_1(z) $ or $ E_2(z) $, where  
	$$
	E_s(z) = F_s\left(\frac{z}{\mu}\right)=\exp\left\{ (-1)^{s+1} i d z^{q} (1+o(1))\right\},\quad s=1,2,
	$$
where $ \mu^{n+2}=p_n^{-1} $ and $ d $, $ q $ are defined in \eqref{consts}.  This proves that the first statement in part (a) holds for $ f $.

Recall that the domains $ G_j(R) $ in the $ z $-plane and the domains $ \widetilde{G}_j(\delta, \tilde{R}) $ in the $ \zeta $- plane are related by means of the transformations $ \zeta =L_j(z)$, see \eqref{LT} and  Lemma~\ref{L3-new}. As discussed earlier in this section, there are two solutions $ w_j^+(\zeta) $ and $ w_j^-(\zeta) $ of \eqref{e2} in the domain $ \widetilde{G}_j(\delta, \tilde{R}) $, which are asymptotic to $ e^{i\zeta} $ and $ e^{-i\zeta} $. Define the functions $ g_j^+(z) $ and $ g_j^-(z) $ as
	$$
	g_j^+(z) = z^{-n/4} (1+\ell(z))^{-1/4} w_j^+(\zeta)\quad \text{and} \quad  g_j^-(z) = z^{-n/4} (1+\ell(z))^{-1/4} w_j^-(\zeta),
	$$
where $ \zeta= L_j(z)$. 
Since $ w_j^+(\zeta) $ and $ w_j^-(\zeta) $ are solutions of equation \eqref{e2} with $ T(\zeta) $ given in \eqref{T},  it follows 
that $ g_j^+(z) $ and $ g_j^-(z) $ are solutions of equation \eqref{s-e1}. 
	We show this for $ g_j^+(z) $ only. From the definition of $ g_j^+(z) $, we see that $ 	w_j^+(\zeta) = B(z)g_j^+(z)$, where $ B(z)= z^{n/4} (1+\ell(z))^{1/4} $. Using the same calculations from the proof of Lemma~\ref{Q-P} down to \eqref{proof1}, we obtain
	\begin{equation}\label{ew-1}
	(w^+_j)''(\zeta )=\frac{1}{4}\frac{1}{A(z)}g_j^+(z)\frac{d}{dz}\left( 
		\frac{Q^{\prime }(z)}{B(z)^{3}A(z)}\right) +\frac{B(z)}{Q(z)}(g_j^+)''(z),
	\end{equation}
	where $ A(z)= z^{n/2} (1+\ell(z))^{1/2} $. Since $ w_j^+(\zeta) $ is a solution of \eqref{e2} with $ T(\zeta) $ given in \eqref{T}, it follows that  
		\begin{equation}\label{ew}
		(w^+_j)''(\zeta ) = (T(\zeta)-1) w^+_j(\zeta ).
		\end{equation}
	By replacing $ T(\zeta) $ with \eqref{proof3}  and by replacing  $ w^+_j(\zeta ) $ with $ B(z) g_j^+(z) $ in \eqref{ew}, we get
	 \begin{equation}\label{ew+1}
	 	(w^+_j)''(\zeta ) =\frac{1}{4}\frac{1}{A(z)}g^+_j(z)\frac{d}{dz}\left(
	 \frac{Q^{\prime }(z)}{B(z)^{3}A(z)}\right) -B(z)g^+_j(z).
	 \end{equation}
	 Now from \eqref{ew-1} and \eqref{ew+1}, we obtain that $ g_j^+(z) $ is a solution \eqref{s-e1}.

By substituting $ \zeta(z) $ in $ w_j^+(\zeta) $ and in $ w_j^-(\zeta) $, we obtain that $ g_j^+(z) $  and $ g_j^-(z) $ are asymptotically comparable to $ F_1(z) $ and $ F_2(z) $, respectively. Therefore, for any given sector between critical rays, there are two solutions of \eqref{s-e1} corresponding to $ F_1(z) $ and $ F_2(z) $. Finally, from \eqref{g} we deduce that \eqref{e1} has two solutions corresponding to $ E_1(z) $ and $ E_2(z) $. This proves the last sentence in part (a).


\section{Proof of Theorem \ref{theo-zeros}}\label{proof-of-thm2}
For $ j=0, \ldots,n+1 $, let $ f(z) $, $ g(z) $, $ g_j(z) $ and $ w_j(\zeta) $ be as in the proof of Theorem \ref{theo-growth}. In particular, $ w_j(\zeta) $ is a solution of \eqref{e2}, where $ T(\zeta) $ satisfies Hypothesis~$\mathrm{F}^+ $ when $ j $ is even and Hypothesis~$ \mathrm{F}^- $  when $ j $ is odd.

Suppose that $ j $ is even, and suppose
that $w_j(\zeta)$ is oscillatory in $\tilde{G}_j(\delta,\tilde{R})$. Then it follows from 
 Lemma~\ref{t5} that the zeros of $w_j(\zeta)$ in $H_0$ lie, for some $ v_0\in\R $, in the squares $Q_{k,\gamma}$ in-between the horizontal half-lines
$$
\ell_{j1}:\z= u+i(v_0-\gamma)\quad
\textnormal{and} \quad
\ell_{j2}:\z= u+i(v_0+\gamma),
$$ 	
where $\gamma>0$ is small and $u>0$. Moreover, $w_j(\zeta)$ has no other zeros in $H_0$. 
All the zeros of $w_j(\zeta)$, apart from finitely many possible exceptions, lie in the squares $Q_{k,\gamma}$.  
		From Lemma~\ref{L1}, we see that the pre-image of any horizontal half-strip bounded by $ \ell_{j1} $ and $ \ell_{j2} $ 
		under the mapping $\zeta=L_j(z)$, $z\in G_j(R)$, is contained in a domain  
			\begin{equation}\label{K}
			\Omega_{j}=\{z=re^{i\theta} : |\theta-\psi_j|<Ce_n(r),\,r>R\},
			\end{equation}
		where $C>0$ depends on $n,R$. 
Since the zeros of $g_j$ in ${G}_j(R)$ depend on the zeros of $w_j(\z)$ in $\tilde{G}_j(\delta,\tilde{R})$ in a one-to-one manner by Lemma~\ref{L3-new}, it follows that the zeros of $g_j$ are located in the curvilinear strip $\Omega_{j}$ 
around the ray $ \arg( z) = \psi_{j} $, with finitely many possible exceptions. We may repeat the above process for every $j$, and find that all zeros of $g$, apart from finitely many possible exceptions, lie in the union $\bigcup_{j=0}^{n+1} \Omega_{j}$. 
Now, from \eqref{g}, it is easy to deduce that all but at most finitely many zeros of $ f $ lie in the union $\bigcup_{j=0}^{n+1} \Lambda_{j,c}$, where the $ \Lambda_{j,c} $ are the translates \eqref{CS} of the domains 
	$$
	\Lambda_{j}=\{z=re^{i\theta} :|\theta-\theta_j|<Ce_n(r),\,r>R\},
	$$
which are the rotation of the domains \eqref{K}.

It remains to estimate the number of zeros of  $ f $ in $\Lambda_{j,c}$. Let us first estimate the number of zeros of $ g $ in $\Omega_{j}$
, i.e., the number of zeros of $ g_j $. From Lemma~\ref{t5}, we have 
	$$
	n(\rho,H_0, 1/w_j)= \frac{\rho}{\pi} (1+o(1)), \quad \rho\to\infty.
	$$
Since $ w_j(\zeta) $ can have at most finitely many zeros that do not lie  in the squares, we obtain
	$$
	n(\rho,1/w_j)= \frac{\rho}{\pi} (1+o(1)), \quad \rho\to\infty.
	$$
By Lemma~\ref{L2}, we have 
	$$
	\rho= \frac{2}{n+2}\; r^{(n+2)/2} (1+o(1)),\quad r\to\infty.
	$$
Hence, the number of zeros of $g$ with modulus $ \le r $ in $\Lambda_{j}^0$
, where $r>R$,  is given by
	\begin{equation*}
	n(r,\Omega_j,1/g)= \frac{2}{(n+2) \pi} \; r^{(n+2)/2} (1+o(1)),\quad r\to\infty. 
	\end{equation*}
Now, from \eqref{g}, it is easy to obtain that the number of zeros of $f$ with modulus $ \le r $ in $\Lambda_{j,c}$, where $r>R$, is given by
	\begin{equation*}
	n(r,\Lambda_{j,c},1/f) = \frac{2\sqrt{|p_n|}}{(n+2) \pi} \; r^{(n+2)/2} (1+o(1)),\quad r\to\infty. 
	\end{equation*}
This completes the proof of \eqref{n}, and \eqref{N} follows by a simple integration.

If $ j $ is odd, then we get the results by using similar reasoning with Lemma~\ref{oscillation-of-solutions} instead of Lemma~\ref{t5}.


\section{Proof of Theorem \ref{theo-growth-zero}}\label{proof-of-thm3}

(a) Fix the ray $\arg(z)=\theta_j$, and suppose that $f$ blows up exponentially on each ray in both sectors 
$S(\theta_{j-1},\theta_j)$ and $S(\theta_j,\theta_{j+1})$. Then from \eqref{g}, $ g $ blows up  exponentially on each ray of the sectors $S(\psi_{j-1},\psi_j)$ and $S(\psi_j,\psi_{j+1})$, i.e., in the domains $ G_j^+ $ and $ G_j^- $ defined in \eqref{plus-minus}. From the proof of Theorem~\ref{theo-growth}, 
$g$ has the asymptotic form 
\eqref{fasy} in $G_j(R)$ with $c_{j1}c_{j2}\neq 0$, and precisely one of the exponential terms $F_1(z), F_2(z) $ is dominant in each of the sectors $G_{j}^-$ and $G_j^+$. Without loss of generality, we may suppose that $F_1(z)$ is dominant in $G_{j}^-$. Then $F_2(z)$ decays to zero in $G_{j}^-$, and the roles of $F_1(z)$ and $F_2(z)$ are interchanged in $G_j^+$. Thus we may re-write \eqref{fasy} in the form
	$$
	g(z)=\left\{\begin{array}{rl}
	c_{j1}z^{-n/4} \left(F_1(z)+F_2(z)\right) (1+o(1)),\ & z\in G_{j}^-,\\
	c_{j2}z^{-n/4} \left(F_1(z)+F_2(z)\right) (1+o(1)),\ & z\in G_j^+.
	\end{array}\right.
	$$
Hence, $g$ is asymptotically comparable to
	$
	\sin (z^{(n+2)/2}(1+o(1)))
	$
as $z\to\infty$ in either of the sectors $G_{j}^-$ or $G_j^+$. Since $g$ is entire, this asymptotic form holds in $G_j(R)$, for sufficiently large~$ R $. 

Since the function $\sin(z^{(n+2)/2})$ is oscillatory on $\arg(z)=\psi_j$, it follows that $g$ is oscillatory near $\arg(z)=\psi_j$, and then from \eqref{g}, $ f $  is oscillatory near $\arg(z+c)=\theta_j$. More precisely, from Theorem~\ref{theo-zeros}, we find that all but at most finitely many zeros of $f$ lie in the $\Lambda_{j,c}$ that encloses the ray $\arg(z+c)=\theta_j$. The domain $\Lambda_{j,c}$ in turn is essentially contained in the $\varepsilon$-sector $W_j(\varepsilon)$. Thus the ray $\arg (z)=\theta_j$ is non-shortage by definition.

(b) Suppose that $f$ decays to zero exponentially on each ray in $S(\theta_j,\theta_{j+1})$. From Theorem~\ref{theo-growth}(b), we find that $f$ blows up exponentially on each ray in both sectors $S(\theta_{j+1},\theta_{j+2})$ and $S(\theta_{j-1},\theta_{j})$.  From the proof of Theorem~\ref{theo-growth}, we see that from all three possible combinations of $ E_1(z) $ and $ E_2(z) $, the solution $ f $ must be asymptotically comparable to one of $E_1(z),  E_2(z) $ along each ray in $W_j(\varepsilon)$, and asymptotically comparable to the other one of $E_1(z),  E_2(z) $ along each ray in $W_{j+1}(\varepsilon)$. 

Since the exponential terms $E_s(z)$ are zero-free functions, this means that apart from a disc of large radius $R$, $f$ cannot vanish in $W_j(\varepsilon)$ and $W_{j+1}(\varepsilon)$. Thus the rays  $\arg (z)=\theta_j$ and $\arg (z)=\theta_{j+1}$ are shortage by definition.


\appendix
\section{Branch cuts and branches}\label{appendix}

In \cite[Ch.~7.4]{hille}, Hille considers multi-valued functions on Riemann surfaces, which can sometimes seem abstract and difficult to visualize, especially for non-expert readers. Alternatively,  branch cuts can be made in the complex plane  so that the functions in question are single-valued and analytic everywhere except on these branch cuts.  In the literature,  either the real negative axis or the real positive axis is typically chosen as the branch cut for elementary multi-valued functions such as  square roots. However, when operating in various
sectors as we do with Liouville's transformation, these two branch cuts do not always work. 
When using multiple branch cuts and branches
in the same reasoning leads to questions about  how they are related to one another.
 In this appendix, we will discuss this relationship
		for the square root. The branches for the fourth root  can be handled in a similar way.

Given a real number $ \vp $, let $\mathcal{C}_\vp$ be the branch cut along the ray $\arg (z)=\vp$. For any $ m\in\mathbb{Z} $, 
the symbol $ \mathcal{C}_{\vp+2m\pi} $ represents the same cut. Each branch cut is associated with two branches of the square root, where one branch is defined by
	$$
	{z}^{1/2}= \sqrt{|z|} \exp\left(i \frac{\arg (z)}{2}\right), \quad \vp<\arg (z) \le \vp + 2\pi,
	$$
and the second branch is for $\vp+2\pi<\arg (z) \le \vp + 4\pi$.
Since the first branch is related to the arguments on the interval $(\vp, \vp+2\pi]$, the branch is denoted by $_\varphi\sqrt{\cdot}=(\cdot)^{1/2}$, and the argument on that interval is denoted by $\arg_\varphi(\cdot)$. So, we can write the branch as follows:
	\begin{equation}\label{square root}
	_\varphi\sqrt{z}= \sqrt{|z|} \exp\left(i \frac{\arg_\varphi (z)}{2}\right).
	\end{equation}
In addition, it is easy to see that 
	$$
	_\vp\sqrt{\cdot} \equiv {_{\vp+4m\pi}}\sqrt{\cdot} \equiv - \big({_{\vp+2\pi}}\sqrt{\cdot} \, \big)
	$$
for any real number $\vp$ and any integer $m$. In particular, we may consider $ \s{\cdot}{\vp} $ and $ \s{\cdot}{\vp+2\pi} $ as two different branches of the square root related to the branch cut $ \mathcal{C}_{\vp} $.

{The following lemma shows, in Theorem~\ref{theo-growth}, that any fixed branch cut outside the sector $S$ and any fixed branch for the square roots  can be chosen in the functions $E_1(z)$ and $E_2(z)$, provided that they are the same for both $E_1(z)$ and $E_2(z)$.}

\begin{lemma}\label{different-branches}
Let $\varphi,\psi\in\R$, and let $z\in\C\setminus\{0\}$. Then either $\s{z}{\vp}=\s{z}{\psi}$
or $\s{z}{\vp}=-\s{z}{\psi}$.
\end{lemma}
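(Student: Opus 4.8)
The plan is to reduce the statement to the elementary fact that a nonzero complex number has exactly two square roots, which differ only by a sign. First I would observe that, directly from the definition \eqref{square root}, each of $\s{z}{\vp}$ and $\s{z}{\psi}$ is a square root of $z$: indeed $\left(\s{z}{\vp}\right)^2=|z|\exp\!\left(i\arg_\vp(z)\right)=z$, since $\arg_\vp(z)$ is by construction a value of the argument of $z$, and the same holds with $\psi$ in place of $\vp$. Thus $\s{z}{\vp}$ and $\s{z}{\psi}$ both belong to the two-element set $\{w\in\C:w^2=z\}$, and any two members of that set are either equal or negatives of one another. This already yields the dichotomy.

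To exhibit explicitly which of the two alternatives occurs, I would give the following hands-on version. Since $\arg_\vp(z)$ and $\arg_\psi(z)$ are both arguments of $z$, their difference is an integer multiple of $2\pi$, say $\arg_\vp(z)-\arg_\psi(z)=2k\pi$ for some $k\in\Z$. Substituting into \eqref{square root},
$$
\s{z}{\vp}=|z|^{1/2}\exp\!\left(i\frac{\arg_\psi(z)}{2}+ik\pi\right)=(-1)^k\,|z|^{1/2}\exp\!\left(i\frac{\arg_\psi(z)}{2}\right)=(-1)^k\,\s{z}{\psi},
$$
so $\s{z}{\vp}=\s{z}{\psi}$ when $k$ is even and $\s{z}{\vp}=-\s{z}{\psi}$ when $k$ is odd.

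There is essentially no obstacle here; the only point deserving a word of care is that $\arg_\vp$ and $\arg_\psi$ each pick a representative of the argument in a half-open interval of length exactly $2\pi$, so their difference is a multiple of $2\pi$ rather than merely of $\pi$, which is precisely what forces the two branches to agree up to sign and not up to a more general root of unity. I would also remark in passing that $k$ depends on $z$ in general (its parity flips as $z$ crosses the relevant cuts), which is exactly why the conclusion is the dichotomy ``$=$ or $=-$'' rather than one global identity; this is consistent with the comment following Theorem~\ref{theo-growth} that for some choices of branch the roles of $E_1(z)$ and $E_2(z)$ get interchanged.
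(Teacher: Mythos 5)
Your proof is correct, and it takes a genuinely more elementary route than the paper's. The paper first develops an explicit closed-form expression for $\arg_\varphi(z)$ in terms of a ceiling function and a characteristic-function correction (equations \eqref{arg-phi} and \eqref{arg_phi_2}), and then substitutes $\theta=\arg_\psi(z)$ into that formula to show that $\tfrac{1}{2}\bigl(\arg_\varphi(z)-\arg_\psi(z)\bigr)=\pi\,\mathcal{K}_\varphi(\arg_\psi(z))$ with $\mathcal{K}_\varphi$ integer-valued, yielding \eqref{sqrt-fi-psi} and thereby the dichotomy. You instead go straight to the two essential facts: that $\s{z}{\vp}$ and $\s{z}{\psi}$ are both square roots of $z$, and that any two determinations of $\arg z$ for a fixed nonzero $z$ differ by $2k\pi$ for some $k\in\Z$. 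Either your abstract version (``two elements of a two-element set differing by sign'') or your hands-on version (the factor $(-1)^k$) is a complete proof; you bypass the ceiling-function apparatus entirely. What the paper's heavier derivation buys is the explicit sign formula \eqref{sqrt-fi-psi}, which tells you \emph{which} alternative occurs in terms of $\vp$, $\psi$, and $\arg_\psi(z)$; the lemma's statement itself only asserts the dichotomy, for which your argument is both shorter and closer to first principles. Your closing remark that the parity of $k$ depends on $z$ is accurate and is precisely the observation the paper encodes through $\mathcal{K}_\varphi(\arg_\psi(z))$.
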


Before proving Lemma~\ref{different-branches}, we will find an alternative representation for the function
$\s{\cdot}{\vp}$, $\varphi\in\R$.
Let  $z=re^{i\theta} \in \C$ be a point with a fixed argument $\theta\in\mathbb{R}$ such that $ \theta \not= \varphi+ 2\pi \mathbb{Z}$. Then $\arg_\vp (z)= \theta +2k\pi$ for some unique integer~$k = k(\theta,\vp)$~satisfying
	$
	\vp < \theta +2k\pi \le \vp +2\pi,
	$
or
	$$
	\frac{\vp-\theta}{2\pi} < k \le \frac{\vp-\theta}{2\pi} +1.
	$$
We have 
	\begin{equation*}
		\left\lceil\frac{\vp-\theta}{2\pi}\right\rceil \le k \le \left\lfloor\frac{\vp-\theta}{2\pi} +1 \right\rfloor 
		= \left\lfloor\frac{\vp-\theta}{2\pi} \right\rfloor +1
		=\left\lceil\frac{\vp-\theta}{2\pi}\right\rceil.
	\end{equation*}
Thus
	\begin{equation}\label{arg-phi}
	\arg_\varphi (z) = \theta + 2\pi \left\lceil \frac{\varphi - \theta}{2\pi} \right\rceil.
	\end{equation}
To cover the excluded cases when $ \theta = \vp + 2\pi\mathbb{Z} $, we extend \eqref{arg-phi} by writing
	\begin{equation} \label{arg_phi_2}
	\arg_\varphi( z) =  \theta + 2\pi \left\lceil \frac{\varphi - \theta}{2\pi} \right\rceil + 2\pi \chi_{\mathbb{Z}}\left( \frac{\varphi - \theta}{2\pi}\right), 
	\end{equation}
where $ \chi_{\mathbb{Z}}(t) $ is the characteristic function of the set $ \mathbb{Z} $. Formula \eqref{arg_phi_2} holds for any $ \theta $.
 The last term in \eqref{arg_phi_2} involving the characteristic function is a controller, which returns the arguments $ \vp + 2\pi \mathbb{Z} $ to be in the interval $ (\vp, \vp+2\pi] $ as needed. We illustrate this controller term in the following example.
 
\begin{example}
By the $2\pi$-periodicity of the exponential, the argument of $ z=re^{-i\pi} $ can be expressed as the number $(2k+1)\pi$, where $k\in\mathbb{Z}$. Using \eqref{arg_phi_2}, we obtain
		\begin{eqnarray*}
		\arg_{-\pi} (z) &=& (2k+1)\pi + 2\pi \left\lceil \frac{-\pi - (2k+1)\pi}{2\pi} \right\rceil + 2\pi \chi_{\mathbb{Z}}\left( \frac{-\pi - (2k+1)\pi}{2\pi}\right)\\
		&=& (2k+1)\pi+ 2\pi\cdot (-1-k)+2\pi\cdot 1 = \pi,
		\end{eqnarray*}
as desired.
\end{example}

By substituting  \eqref{arg_phi_2} in \eqref{square root}, we obtain a more explicit form for the function $ \s{\cdot}{\vp} $ as
	\begin{equation} \label{sqrt_vfi}
	\s{z}{\vp} =\sqrt{r}\; \exp
			\left\{
				i \left(\frac{\theta}{2} + \pi \mathcal{K}_\vp(\theta) \right)
			\right\}, \quad z=re^{i\theta},
	\end{equation}
where
$$
\mathcal{K}_\vp(\theta) = \left\lceil \frac{\varphi - \theta}{2\pi} \right\rceil +  \chi_{\mathbb{Z}}\left( \frac{\varphi - \theta}{2\pi}\right).
$$

\medskip\noindent
\emph{Proof of Lemma~\ref{different-branches}.}	
By definition, the relation between the square roots $ \s{\cdot}{\vp} $ and $ \s{\cdot}{\psi}  $ is 
	\begin{align*}
	\s{z}{\vp} &= \sqrt{|z|} \exp \left(i \frac{\arg _{\varphi}( z)}{2}\right) \\
	&= \sqrt{|z|} \exp \left(i \frac{\arg _{\psi} (z)}{2}\right) \;  \exp \left(i \frac{\arg _{\varphi} (z)- \arg_{\psi} (z)}{2}\right)\\
	& = \s{z}{\psi}\; \exp\left(i \frac{\arg _{\varphi} (z)- \arg_{\psi} (z)}{2}\right),\quad z\neq 0.
	\end{align*}
By replacing $ \theta $ by $\arg_{\psi} (z)  $ in  \eqref{arg_phi_2}, we obtain
	$$
	 \frac{\arg _{\varphi} (z)- \arg_{\psi} (z)}{2} =  \pi
	 \left(\left\lceil\frac{\varphi-\arg_\psi(z)}{2 \pi}\right\rceil+ \chi_{\mathbb{Z}}\left(\frac{\varphi-\arg_\psi (z)}{2 \pi}\right) \right)
	 = \pi \mathcal{K}_\vp(\arg_\psi (z)).
	$$
Thus
	\begin{equation}\label{sqrt-fi-psi}
	\s{z}{\vp} = \s{z}{\psi} \; \exp\left\{i \pi \mathcal{K}_{\vp}(\arg_\psi (z))\right\}.
	\end{equation}
Since $\mathcal{K}_{\vp}$ is an integer-valued function, the two square roots in \eqref{sqrt-fi-psi}
are either identical or one is $-1$ times the other.\hfill$\Box$

\footnotesize
\bigskip
\noindent
\emph{Gary ~G.~Gundersen}\\
\textsc{University of New Orleans, Department of Mathematics, New Orleans, LA 70148, USA}\\
\texttt{email:ggunders@uno.edu}
\smallskip

\noindent
\emph{Janne~Heittokangas}\\
\textsc{University of Eastern Finland, Department of Physics and Mathematics,\\
P.O. Box 111, 80101 Joensuu, Finland}\\
\texttt{email:janne.heittokangas@uef.fi}
\smallskip

\noindent
\emph{Amine~Zemirni}\\
\textsc{University of Eastern Finland, Department of Physics and Mathematics,\\
P.O. Box 111, 80101 Joensuu, Finland}\\
\texttt{email:amine.zemirni@uef.fi}


\end{document}